\theoremstyle{plain}
\newtheorem{theorem}{Theorem}[section]
\newtheorem{corollary}[theorem]{Corollary}
\newtheorem{proposition}[theorem]{Proposition}
\newtheorem{lemma}[theorem]{Lemma}
\newtheorem{remark}[theorem]{Remark}
\numberwithin{theorem}{section}
\numberwithin{equation}{section}
\newcommand{\cred}{\color{red}}
\newcommand{\average}{{\mathchoice {\kern1ex\vcenter{\hrule height.4pt
width 6pt depth0pt} \kern-9.7pt} {\kern1ex\vcenter{\hrule
height.4pt width 4.3pt depth0pt} \kern-7pt} {} {} }}
\def\R{\mathbb{R}}
\def\div{\text{div}}
\renewcommand{\a }{\alpha }
\renewcommand{\d}{\delta }
\newcommand{\D }{\Delta }
\newcommand{\e }{\varepsilon }
\newcommand{\G }{\Gamma}
\renewcommand{\l }{\lambda }
\newcommand{\n }{\nabla }
\newcommand{\vp }{\varphi }
\newcommand{\s }{\sigma }
\newcommand{\z }{\zeta}
\renewcommand{\th }{\theta }
\renewcommand{\O }{\Omega }
\newcommand{\ov}{\overline}
\newcommand{\be}{\begin{equation}}
\newcommand{\ee}{\end{equation}}
\newcommand{\de}{\partial}
\newcommand{\ti}{\widetilde}
\renewcommand{\k}{\kappa}
\newcommand{\calH }{\mathcal{H}}
\newcommand{\calO }{\mathcal{O}}
\newcommand{\N}{\mathbb{N}}
\newcommand{\cA}{{\mathcal A}}
\newcommand{\cH}{{\mathcal H}}
\newcommand{\cK}{{\mathcal K}}
\newcommand{\cV}{{\mathcal V}}
\newcommand{\dist}{{\rm dist}}
\newcommand{\weak}{\rightharpoonup}
\newcommand{\eps}{\varepsilon}
\DeclareMathOperator{\id}{id}
\renewcommand{\epsilon}{\varepsilon}
\newcommand{\Ds}{ (-\D)^s}
\begin{document}
\title{A fractional Hadamard formula and applications}

\author[]
{Sidy Moctar Djitte${}^{1,2}$, Mouhamed Moustapha Fall${}^1$, Tobias Weth${}^2$}


\address{${}^1$African Institute for Mathematical Sciences in Senegal (AIMS Senegal), 
KM 2, Route de Joal, B.P. 14 18. Mbour, S\'en\'egal.}

\address{${}^2$Goethe-Universit\"{a}t Frankfurt, Institut f\"{u}r Mathematik.
Robert-Mayer-Str. 10, D-60629 Frankfurt, Germany.}

\email{djitte@math.uni-frankfurt.de, sidy.m.djitte@aims-senegal.org}
\email{weth@math.uni-frankfurt.de}
\email{mouhamed.m.fall@aims-senegal.org}

\maketitle

 \begin{abstract}
   \noindent 
We derive a shape derivative formula for the family of principal Dirichlet eigenvalues $\l_s(\Omega)$ of the fractional Laplacian $(-\Delta)^s$ associated with bounded open sets $\Omega \subset \R^N$ of class $C^{1,1}$. This extends, with a help of a new approach, a result in \cite{dal} which was restricted to the case $s=\frac{1}{2}$. As an application, we consider the maximization problem for $\lambda_s(\O)$ among annular-shaped domains of fixed volume of the type $B\setminus \ov B'$, where $B$ is a fixed ball and $B'$ is ball whose position is varied within $B$. We prove that $\lambda_s(B\setminus \ov B')$ is maximal when the two balls are concentric. Our approach also allows to derive similar results for the fractional torsional rigidity. More generally, we will characterize one-sided shape derivatives for best constants of a family of subcritical fractional Sobolev embeddings.
 \end{abstract}

\maketitle

\section{Introduction}\label{S:intro}

Let $s\in (0,1)$ and $\O \subset \R^N$ be a bounded open set. The present paper is devoted to the study of best constants $\lambda_{s,p}(\O)$ in the family of subcritical Sobolev inequalities
\begin{equation}
  \label{eq:sobolev-ineq-main}
\lambda_{s,p}(\O) \|u\|_{L^p(\Omega)}^2 \le   [u]_{s}^2 \qquad \text{for all $u \in \cH^s_0(\Omega)$,}
\end{equation}
where $p\in [1, \frac{2N}{N-2s})$ if $2s<N$ and $p\in [1, \infty)$ if $2s\geq N=1$. Here, the Sobolev space $\cH^s_0(\Omega)$ is given as completion of $C^\infty_c(\Omega)$ with respect to the norm $[\,\cdot\,]_{s}$ defined by
\be \label{eq:def-bNs}
[u]_{s}^2=  \frac{ c_{N,s}}{2}\int_{\R^{N}}\! \int_{\R^{N}}\!\frac{(u(x)-u(y))^2}{|x-y|^{N+2s}}dx dy \quad \qquad \text{with} \quad c_{N,s}= \pi^{-\frac{N}{2}}s 4^s\frac{\Gamma(\frac{N}{2}+s)}{\Gamma(1-s)}.
\ee
The normalization constant $c_{N,s}$ is chosen such that  $[u]_{s}^2=\int_{\R^N}|\xi|^{2s}|\hat{u}(\xi)|^2d\xi$ for $u \in \cH^s_0(\Omega)$, where $\hat{u}$ denotes the Fourier transform of $u$. The best (i.e., largest possible) constant in (\ref{eq:sobolev-ineq-main}) is given by 
\be \label{eq:def-lambda-sp}
 \lambda_{s,p}(\O):=\inf \left\{[u]_s^2\::\, u\in \cH^s_0(\Omega),\quad \|u\|_{L^p(\O)}=1  \right\}.
\ee
As a consequence of the subcriticality assumption on $p$ and the boundedness of $\Omega$, the space $\cH^s_0(\Omega)$ compactly embeds into $L^p(\O)$. Therefore a direct  minimization argument shows that $\lambda_{s,p}(\O) $ admits a nonnegative minimizer $u\in \cH^s_0(\Omega)$ with $\|u\|_{L^p(\Omega)}=1$. Moreover, every such minimizer solves, in the weak sense, the semilinear problem 
\begin{align}\label{eq:1.1}
\Ds u = \l_{s,p}(\O)u^{p-1}\quad\text{ in $\O$},\qquad \quad u = 0\quad\text{ in $\R^N\setminus\O$.}
\end{align} 
where $\Ds$ stands for the fractional Laplacian. It therefore follows from regularity theory and the strong maximum principle for $\Ds$ that $u$ is strictly positive in $\Omega$, see Lemma~\ref{reg-prop-minimizers} below. 
We recall that, for functions $\vp\in C^{1,1}_c(\R^N)$,  the fractional Laplacian is given by 
$$
\Ds\vp(x)=c_{N,s}\,PV \!\!\int_{\R^N} \frac{\vp(x)-\vp(y)}{|x-y|^{N+2s}}\, dy=\frac{c_{N,s}}{2}\int_{\R^N}\frac{2\vp (x)-\vp(x+y)-\vp(x-y)}{|y|^{N+2s}}\, dy.
$$
Of particular interest are the cases  $p=1$ and $p=2$ which correspond to the fractional torsion problem
\begin{align}\label{eq:1.1-ft}
\Ds u = \l_{s,1}(\O)\quad\text{ in $\O$},\qquad \quad u = 0\quad\text{ in $\R^N\setminus\O$,}
\end{align} 
and the eigenvalue problem
\begin{align}\label{eq:1.1-ev}
\Ds u = \l_{s,2}(\O)u\quad\text{ in $\O$},\qquad \quad u = 0\quad\text{ in $\R^N\setminus\O$,}
\end{align} 
associated with the first Dirichlet eigenvalue of the fractional Laplacian, respectively. In these cases, the minimization problem for $\l_{s,p}(\O)$ in \eqref{eq:def-lambda-sp} possesses a unique positive minimizer. Indeed, it is a well-known consequence of the fractional maximum principle that \eqref{eq:1.1-ft} admits a unique solution, and that \eqref{eq:1.1-ev} has a unique positive eigenfunction with $\|u\|_{L^2(\Omega)}=1$. Incidentally, the uniqueness of positive minimizers extends to the full range $1 \le p \le 2$, as we shall show in Lemma~\ref{uniqueness-extended} in
 the appendix of this paper.

Our first goal in this paper is to analyze the dependence of the best constants on the underlying domain $\Omega$. For this we shall derive a formula for a one-sided shape derivative of the map $\O\mapsto \l_{s,p}(\O)$. We assume from now on that $\Omega \subset \R^N$ is a bounded open set of class $C^{1,1}$, and we consider a family of deformations $\{\Phi_\eps\}_{\eps \in (-1,1)}$ with the following properties:
\be\label{eq:def-diffeom}
\begin{aligned}
&\text{$\Phi_\eps \in C^{1,1}(\R^N; \R^N)$ for $\eps  \in (-1,1)$, $\Phi_0= \id_{\R^N}$, and}\\
&\text{the map $(-1,1) \to C^{0,1}(\R^N,\R^N)$, $\eps \to \Phi_\eps$ is of class $C^2$.}  
 \end{aligned}
 \ee
{\cred  We note that \eqref{eq:def-diffeom} implies that $\Phi_\e:\R^N\to \R^N$ is a global diffeomorphism if $|\e|$ is small enough, see e.g. \cite[Chapter 4.1]{delfour-zolesio}. To clarify, we stress that 
 we only need the $C^2$-dependence of $\Phi_\eps$ on $\eps$ with respect to Lipschitz-norms, while $\Phi_\eps$ is assumed to be a $C^{1,1}$-function for $\eps \in (-1,1)$ to guarantee $C^{1,1}$-regularity of the perturbed domains $\Phi_\e(\O)$.}

From the variational characterization of  $ \l_{s,p}(\O)$ it is not difficult to see that the map $\e\mapsto \l_{s,p}(\Phi_\e(\O))$ is continuous. However, since   $\l_{s,p}(\O)$ may not have a unique positive minimizer, we cannot expect  this map to be differentiable.  
  We therefore rely on determining the right derivative of $\e\mapsto \l_{s,p}(\Phi_\e(\O))$ from which we derive differentiability whenever $ \l_{s,p}(\O)$  admits a unique positive minimizer, thereby extending the classical Hadamard shape derivative  formula for the first Dirichlet eigenvalue of the Laplacian $-\Delta$.

Throughout this paper, we consider a fixed function $\d \in C^{1,1}(\R^N)$ which coincides with the \textit{signed} distance function $ \dist(\cdot,\R^{N}\setminus \Omega)-\dist(\cdot,  \Omega)$ in a neighborhood of the boundary $\de\O$. We note here that, since we assume that $\Omega$ is of class $C^{1,1}$, the signed distance function is also of class $C^{1,1}$ in a neighborhood of $\de \O$ but not globally on $\R^N$. We also suppose that $\d$ is chosen with the property that $\d$ is positive in $\O$ and negative in $\R^N\setminus \overline\O$, as it is the case for the signed distance function.
  
   Our first main result is the following.
  \begin{theorem}\label{th:Shape-deriv}
Let $\l_{s,p}(\O)$ be given by  \eqref{eq:def-lambda-sp} and consider a family of deformations $\Phi_\eps$ satisfying \eqref{eq:def-diffeom}. Then the map $\e\mapsto \th(\e):=\l_{s,p}(\Phi_\e(\O))$    is right differentiable  at $\e=0$.  Moreover,
\be\label{eq:right-der-main-th}
\de_+\th(0)=\min \left\{ \G(1+s)^2 \int_{\de\O}(u/\d^s)^2 X \cdot \nu \,dx\::\, u\in \calH    \right\},
\ee
where $\nu$ denotes the interior unit normal on $\partial \Omega$, $\calH$ is the set of positive minimizers for  $  \l_{s,p}(\O)$    and 
$X := \partial_\e \big|_{\e=0} \Phi_\eps$.
\end{theorem}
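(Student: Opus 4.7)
The plan is to pull the minimization problem back to the fixed domain $\Omega$ via $\Phi_\varepsilon$, use a one-sided envelope argument to extract the right derivative, and then convert the resulting volume formula into the stated boundary integral through a nonlocal Pohozaev-type identity with a general vector field. Concretely, from \eqref{eq:def-diffeom} the map $\Phi_\varepsilon$ is a global diffeomorphism of $\R^N$ for small $|\varepsilon|$, so $v = u \circ \Phi_\varepsilon$ bijects $\cH^s_0(\Phi_\varepsilon(\Omega))$ onto $\cH^s_0(\Omega)$. Changing variables in both integrands of \eqref{eq:def-lambda-sp} recasts the problem as
\[
\theta(\varepsilon) = \inf\{F_\varepsilon(v) : v \in \cH^s_0(\Omega)\setminus\{0\}\}, \qquad F_\varepsilon(v) = E_\varepsilon(v)/G_\varepsilon(v)^{2/p},
\]
with $G_\varepsilon(v) = \int_\Omega |v|^p J_\varepsilon\, d\xi$ (where $J_\varepsilon = |\det D\Phi_\varepsilon|$) and $E_\varepsilon$ the transported Gagliardo seminorm carrying the kernel $|\Phi_\varepsilon(\xi)-\Phi_\varepsilon(\eta)|^{-(N+2s)}$ and Jacobian factor $J_\varepsilon(\xi)J_\varepsilon(\eta)$. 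By \eqref{eq:def-diffeom}, $\varepsilon \mapsto F_\varepsilon(v)$ is $C^2$ with derivatives locally uniform in $v$, yielding $F_\varepsilon(v) = F_0(v) + \varepsilon \dot F_0(v) + O(\varepsilon^2)$ with $\dot F_0(v) := \partial_\varepsilon F_\varepsilon(v)|_{\varepsilon=0}$.

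\textbf{Envelope bounds on $\partial_+\theta(0)$.} Testing with an arbitrary fixed $u \in \calH$ gives $\theta(\varepsilon) \leq F_\varepsilon(u)$, hence $\limsup_{\varepsilon\to 0^+}(\theta(\varepsilon)-\theta(0))/\varepsilon \leq \min_{u\in\calH}\dot F_0(u)$. For the matching lower bound, let $v_\varepsilon$ be a positive minimizer of $F_\varepsilon$; the uniform bound $E_\varepsilon(v_\varepsilon) \leq C$, the compact embedding $\cH^s_0(\Omega) \hookrightarrow L^p(\Omega)$, and the uniform convergence $F_\varepsilon \to F_0$ on norm-bounded sets force $v_\varepsilon \to u_* \in \calH$ strongly in $L^p$ and weakly in $\cH^s_0$ along a subsequence. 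Inserting $v_\varepsilon$ in the Taylor expansion and using $F_0(v_\varepsilon) \geq \theta(0)$ gives $\liminf_{\varepsilon\to 0^+}(\theta(\varepsilon)-\theta(0))/\varepsilon \geq \dot F_0(u_*) \geq \min_\calH \dot F_0$. Compactness of $\calH$ together with continuity of $\dot F_0$ (guaranteed by the uniform $C^\alpha(\overline\Omega)$ regularity of $u/\delta^s$ discussed below) ensures the minimum is attained.

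\textbf{Pohozaev conversion.} It remains, for $u \in \calH$, to identify
\[
\dot F_0(u) = \Gamma(1+s)^2 \int_{\partial\Omega}(u/\delta^s)^2\, X \cdot \nu\, d\sigma.
\]
A direct computation produces
\[
\dot F_0(u) = \frac{b_{N,s}}{2} \iint_{\R^N\times\R^N}(u(\xi)-u(\eta))^2\, \calD_X(\xi,\eta)\, d\xi\, d\eta - \frac{2\lambda_{s,p}(\Omega)}{p}\int_\Omega |u|^p \operatorname{div} X\, d\xi,
\]
where $\calD_X$ collects $\operatorname{div} X(\xi) + \operatorname{div} X(\eta)$ with the first-order correction of $|\Phi_\varepsilon(\xi)-\Phi_\varepsilon(\eta)|^{-(N+2s)}$. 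Using the Euler-Lagrange equation \eqref{eq:1.1} together with the local identity $\int_\Omega u^{p-1} X \cdot \nabla u\, d\xi = -\frac{1}{p}\int_\Omega |u|^p \operatorname{div} X\, d\xi$, the expression rearranges into the left-hand side of a nonlocal Pohozaev-type identity with a general deformation field $X$. The boundary regularity $u/\delta^s \in C^\alpha(\overline\Omega)$ of Ros-Oton and Serra, applicable here because $\lambda_{s,p}(\Omega) u^{p-1}$ is bounded and $\partial\Omega \in C^{1,1}$, then allows one to generalize their dilation Pohozaev identity to this $X$, giving exactly the stated boundary expression. The principal difficulty is precisely this final step: justifying nonlocal integration by parts for $(-\Delta)^s$ with a general Lipschitz field $X$ (rather than $X(x)=x$). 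One can either apply the divergence theorem in the Caffarelli-Silvestre extension to a vector field constructed from the flow of $X$, or localize near $\partial\Omega$ through a partition of unity and reduce to the radial case on half-spaces with a small error; tracking the boundary contributions and showing that the interior terms precisely cancel against the Euler-Lagrange contribution is the real work of the proof.
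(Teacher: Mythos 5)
Your overall scaffolding matches the paper's architecture: pull the problem back to the fixed domain $\Omega$ via $\Phi_\eps$, test with fixed minimizers for the upper bound, pass minimizers $v_\eps$ to a limit for the lower bound, and then convert the resulting derivative of the pulled-back functional into a boundary integral. The envelope argument in your second paragraph is essentially Lemmas \ref{lem:upper-estim} and \ref{lem:lower-estim} of the paper, and your formula for $\dot F_0(u)$ agrees (after computing the quotient rule) with what the paper obtains through $\cV_u'(0)$ and the normalization factor $\tau(\eps)$.

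However, there is a genuine gap at the step you yourself flag as ``the real work of the proof.'' You write that one should ``generalize [Ros-Oton--Serra's] dilation Pohozaev identity to this $X$,'' and propose either an extension-based divergence theorem or a partition-of-unity reduction to the radial/half-space case — but you carry out neither. This is exactly where the proof is hard: since $u \in C^s_0(\ov\Omega)$ but $\nabla u \notin L^2(\Omega)$, and since the Pohozaev identity of \cite{RX-Poh} is proved only for the dilation field $X(x)=x$ (where the specific scaling structure is essential), there is no off-the-shelf integration-by-parts formula for general Lipschitz $X$. The paper resolves this by a completely concrete device: replace $u$ with the cut-off approximation $U_k = u\,[\z_k\circ\delta] \in C^{1,1}_c(\Omega)$ for which the $\varepsilon$-derivative of the transported seminorm can be computed by Lemma \ref{lem:j1kprimes-preliminary}, and then show (this is Proposition \ref{lem:lim-ok}, the longest and most delicate argument in the paper, occupying all of Section \ref{s:proof-Lemma-conv}) that as $k\to\infty$ the error terms involving $u\,\Ds[\z_k\circ\delta] - I(u,\z_k\circ\delta)$ concentrate on a boundary layer and converge to a boundary integral weighted by a constant $\kappa_s$ defined by a one-dimensional integral \eqref{eq:-kappa-s-implicit}. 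Your proposal does not address the blow-up of $\n U_k$, $\Ds[\z_k\circ\delta]$, and $I(u,\z_k\circ\delta)$ near $\partial\Omega$ nor the delicate boundary flattening and rescaling needed to control and identify the limit.

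A secondary gap: you assert the final constant $\Gamma(1+s)^2$ appears directly from the generalized Pohozaev identity, but even if the conversion were justified, the constant must be computed. The paper first produces an implicitly defined $\kappa_s$ in Proposition \ref{prop:shape-deriv} and only afterward determines $2\kappa_s = \Gamma(1+s)^2$ by evaluating the shape derivative formula on the explicit family $\Omega_\eps = (-1-\eps,1+\eps)$ with $N=p=1$ using the known solution $w_\eps(x) = \ell_s((1+\eps)^2-x^2)_+^s$. You would need an analogous verification, or else a direct computation of $\kappa_s$ via the logarithmic Laplacian (which the paper mentions but avoids because it is lengthy). Without the cut-off limit argument and the identification of the constant, the proof proposal is an outline of strategy rather than a proof.
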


Here the function $u/\d^s$ is defined on $\partial \Omega$ as a limit. Namely, for $x_0\in \partial \O$, the limit  
\begin{equation}
  \label{eq:limit-def-fractional-normal}
\frac{u}{\d^s}(x_0)=\lim_{\stackrel{x\to x_0 }{x\in \O}}\frac{u}{\d^s}(x)
\end{equation}
exists, as the function $u/\d^s$ extends to a function in $C^\a(\ov\O)$ for some $\a>0$, see \cite{RX}. In addition, the function $\d^{1-s}\n u$ also admits a Hölder continuous extension on $\overline \Omega$ satisfying $\d^{1-s}\n u\cdot \nu=s u/\d^s$ on $\de\O$, see \cite{FS-2019}. As a consequence, the expression $u/\d^s$,  restricted on $\de\O$,  plays the role of an inner fractional normal derivative. Note that, for $s = 1$, the limit on the RHS of (\ref{eq:limit-def-fractional-normal}) coincides with the classical inner normal derivative of $u$ at $x_0$. 

We observe that the constant $\G(1+s)^2$ appears also in the  fractional Pohozaev identity, see e.g. \cite{RX-Poh}. This is, to some extend, not surprising at least in the classical case since   Pohozaev's identity  can be obtained using techniques of domain variation, see e.g. \cite{Wagner}. 

We also remark that one-sided derivatives naturally arise in the analysis of parameter-dependent minimization problems, see e.g. \cite[Section 10.2.3]{delfour-zolesio} for an abstract result in this direction. Related to this, they also appear in the analysis of the domain dependence of eigenvalue problems with possible degeneracy, see e.g. \cite{FW18} and the references therein.

A natural  consequence of Theorem \ref{th:Shape-deriv}  is that the map $\e\mapsto \th(\e)=\l_{s,p}(\Phi_\e(\O))$ is differentiable at $\eps = 0$ whenever $\l_{s,p}(\O)$ admits a unique positive minimizer. Indeed, applying Theorem \ref{th:Shape-deriv} to the map  $\e\mapsto \ti \th(\e):=\l_{s,p}(\Phi_{-\e}(\O))$ yields
$$
\de_-\th(0)=-\de_+\ti \th(0)=\max \left\{\G(1+s)^2 \int_{\de\O}(u/\d^s)^2 X\cdot \nu\,dx\::\, u\in \calH    \right\},
$$
where $\cH$ is given as in Theorem~\ref{th:Shape-deriv}. As a consequence, we obtain the following result.
\begin{corollary}\label{cor:1.2}
Let $\l_{s,p}(\O)$ be given by  \eqref{eq:def-lambda-sp} and consider a family of deformations $\Phi_\eps$ satisfying \eqref{eq:def-diffeom}. Suppose that $ \l_{s,p}(\O)$ admits a unique positive minimizer $u\in \cH^s_0(\Omega)$. 
 Then the map $\e\mapsto \th(\e)=\l_{s,p}(\Phi_\e(\O))$    is differentiable  at $\e=0$.  Moreover
 \begin{equation}
   \label{eq:cor:1.2-formula}
 \th'(0)=\G(1+s)^2 \int_{\de\O}(u/\d^s)^2 X\cdot \nu\,dx,
 \end{equation}
where $X:=\de_\e\big|_{\e=0}\Phi_\eps$.
\end{corollary}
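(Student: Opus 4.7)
The plan is to deduce two-sided differentiability by applying Theorem \ref{th:Shape-deriv} both to the given family $\Phi_\e$ and to its time-reversed counterpart, and then to exploit the uniqueness of the positive minimizer in order to collapse the resulting $\min/\max$ expressions into a single value. First, I would set $\Psi_\e := \Phi_{-\e}$ for $\e \in (-1,1)$. Since $\e \mapsto \Phi_\e$ satisfies (\ref{eq:def-diffeom}) and precomposition with $\e \mapsto -\e$ preserves both the pointwise $C^{1,1}$ regularity in $x$ and the $C^2$ regularity in $\e$, the family $\Psi_\e$ also satisfies (\ref{eq:def-diffeom}), with initial velocity $\de_\e|_{\e=0}\Psi_\e = -X$.

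Applying Theorem \ref{th:Shape-deriv} to $\Psi_\e$, the map $\ti\th(\e) := \l_{s,p}(\Psi_\e(\O)) = \th(-\e)$ is right differentiable at $0$ with
\begin{equation*}
\de_+ \ti\th(0) = \min \left\{ \G(1+s)^2 \int_{\de\O} (u/\d^s)^2 (-X) \cdot \nu \, dx,\ u \in \calH \right\},
\end{equation*}
which, after pulling the minus sign outside the minimum, equals $-\max\{\G(1+s)^2 \int_{\de\O} (u/\d^s)^2 X \cdot \nu \, dx:\ u \in \calH\}$. A direct computation of one-sided difference quotients shows that $\de_+\ti\th(0) = -\de_-\th(0)$; combining these identities yields
\begin{equation*}
\de_-\th(0) = \max\left\{ \G(1+s)^2 \int_{\de\O} (u/\d^s)^2 X \cdot \nu \, dx,\ u \in \calH \right\},
\end{equation*}
while Theorem \ref{th:Shape-deriv} itself already gives $\de_+\th(0)$ as the corresponding minimum over $\calH$.

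In general these formulas only imply $\de_+\th(0) \leq \de_-\th(0)$, with equality iff the functional $u \mapsto \int_{\de\O}(u/\d^s)^2 X\cdot\nu\,dx$ is constant on $\calH$. Under the standing hypothesis that $\calH = \{u\}$ is a singleton this equality is automatic, so $\de_+\th(0) = \de_-\th(0)$, which yields the differentiability of $\th$ at $\e=0$ together with the value prescribed by (\ref{eq:cor:1.2-formula}). I do not anticipate any real technical obstacle: once Theorem \ref{th:Shape-deriv} is in hand, the entire argument is a bookkeeping consequence of the reflection trick $\Psi_\e = \Phi_{-\e}$ and of the uniqueness assumption on the positive minimizer.
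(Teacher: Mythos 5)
Your proposal is correct and matches the paper's own argument: the paper also applies Theorem \ref{th:Shape-deriv} to the time-reversed family $\e\mapsto\Phi_{-\e}$ to identify $\de_-\th(0)$ as the corresponding maximum over $\calH$, and then invokes uniqueness of the positive minimizer to force the one-sided derivatives to coincide.
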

As mentioned earlier, $ \l_{s,p}(\O)$ admits a unique positive minimizer $u\in \cH^s_0(\Omega)$ for $1 \le p \le 2$, see Lemma~\ref{uniqueness-extended} in the appendix.  Therefore  Corollary \ref{cor:1.2}  extends, in particular, the classical Hadamard formula, for the first Dirichlet eigenvalue $\l_{1,2}(\O)$ of $-\Delta$, to the fractional setting. We recall, see e.g. \cite{Eh},  that the classical Hadamard formula is given by 
  \begin{align}\label{eq:1.2}
 \frac{d}{d\e}\Big|_{\e = 0}\l_{1,2}(\Phi_\e(\O)) = \int_{\de\O} |\n u|^2 X\cdot \nu \,dx.
\end{align} 
{\cred An analogue of Corollary \ref{cor:1.2} for the case of the local $r$-Laplace operator was obtained in  \cite{ ML,CFR}. 
We also point out that, prior to this paper, a Hadamard formula in the fractional setting of the type (\ref{eq:cor:1.2-formula}) was obtained in  \cite{dal} for the special case  $p=1$, $s= \frac{1}{2}$, $N = 2$ and $\O$ of class $C^\infty$. We are not aware of any other previous work related to Theorem~\ref{th:Shape-deriv} or \ref{cor:1.2} in the fractional setting.} 

 Our next result provides a characterization of constrained local minima of $\l_{s,p}$. Here and in the following, we call a bounded open subset $\O$ of class $C^{1,1}$ a constrained local minimum for $\l_{s,p}$ if for all families of deformations $\Phi_\eps$ satisfying \eqref{eq:def-diffeom} and the volume invariance condition $|\Phi_\eps(\Omega)|= |\Omega|$ for $\eps \in (-1,1)$, there exists $\e_0 \in (0,1)$ with $\l_{s,p}(\Phi_\e(\O)) \geq \l_{s,p}(\O)$ for $\e\in (-\e_0,\e_0)$. Our classification result reads as follows.

 \begin{corollary}\label{cor:1.3} 
Let $p\in \{1\}\cup [2,\infty)$.
If an open subset $\O$ of $\R^N$ of class $C^{3}$ is a volume constrained local minimum for $\O\mapsto \l_{s,p}(\O)$, then $\O$ is a ball.
\end{corollary}
 Corollary \ref{cor:1.3}  is a consequence of Theorem \ref{th:Shape-deriv}, from which   we derive that if $\O$ is a constrained local minimum then any element   $u\in \calH$ satisfies the overdetermined condition $u/\d^s\equiv constant$ on $\de\O$. Therefore by the rigidity result in \cite{JW17} we find that $\O$ must be a ball. {\cred We point out that we are not able to include the case $p \in (1,2)$ in  Corollary \ref{cor:1.3}, since the rigidity result in \cite{JW17} is based on the moving plane method and therefore requires the nonlinearity in \eqref{eq:1.1} to be Lipschitz. The case $p \in (1,2)$ therefore remains an open problem in Corollary \ref{cor:1.3}.} \\
We note that the authors in \cite{dal} considered the shape minimization problem  for  $\l_{s,p}(\Omega)$ in the case $p=1$, $s= \frac{1}{2}$, $N = 2$ among domains $\O$ of class $C^\infty$ of fixed volume. They showed in \cite{dal} that such minimizers are discs.
 
Next we consider the optimization problem of $\O\mapsto \lambda_{s,p}(\O)$ for $p\in \{1,2\}$ and $\O$ a punctured ball, with the hole having the shape of ball.   We show that, as the hole moves in $\O$ then  $\lambda_{s,p}(\O)$ is maximal  when the two balls are concentric. In the local case $s=1$ and $N=2$, this is a classical result by Hersch \cite{Hersch}. For subsequent generalizations in the case of the local problem, see \cite {LL14,A-C,Kesavan}. 
\begin{theorem}\label{th:1.2}
Let $p\in \{1,2\}$, $B_1(0)$ be the  unit centered ball and $\tau \in (0,1)$. Define 
$$
\cA:=\{a\in B_{1}(0)\,:\, B_\tau(a) \subset B_1(0)\}.
$$
Then the  map $\cA\to \R$, $\,a\mapsto\lambda_{s,p}(B_1(0)\setminus \ov{ B_\tau(a)})$ takes its maximum at $a=0$.
\end{theorem}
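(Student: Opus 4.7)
The plan is to combine Corollary~\ref{cor:1.2}, which applies because $p\in\{1,2\}$ guarantees a unique positive minimizer, with a moving-plane/reflection argument for $(-\D)^s$. Write $\O_a:=B_1(0)\setminus\overline{B_\tau(a)}$ and let $u_a$ denote the unique positive $L^p$-normalized minimizer on $\O_a$. Given $a\in\cA$ and a unit vector $e\in\R^N$, I would pick a cutoff $\eta\in C_c^{1,1}(B_1(0))$ with $\eta\equiv 1$ on a neighborhood of $\overline{B_\tau(a)}$ and set $\Phi_\e(x)=x+\e\eta(x)e$. For $|\e|$ small one has $\Phi_\e(\O_a)=\O_{a+\e e}$, and since the interior unit normal on $\pa B_\tau(a)$ at $a+\tau\o$ is $\o$, Corollary~\ref{cor:1.2} yields
\begin{equation*}
\pa_e \l_{s,p}(\O_a)=\G(1+s)^2\tau^{N-1}\int_{S^{N-1}}\big(u_a/\d^s\big)^2(a+\tau\o)\,(e\cdot\o)\,d\sigma(\o).
\end{equation*}
By the rotational invariance of $a\mapsto \l_{s,p}(\O_a)$ it suffices to show that $f(t):=\l_{s,p}(\O_{te_1})$ is strictly decreasing on $(0,1-\tau)$.

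Fix $t\in(0,1-\tau)$, let $a=te_1$, set $T:=\{x_1=t\}$, $\Sigma:=\{x_1<t\}$, and let $R_T$ denote reflection across $T$. The identity $|x|^2=|x-2a|^2+4t(x_1-t)$ shows that $|x|<|x-2a|$ on $\Sigma$, hence
\begin{equation*}
R_T(B_1(0)\cap T^+)=B_1(2a)\cap\Sigma\;\subsetneq\; B_1(0)\cap\Sigma;
\end{equation*}
since $B_\tau(a)$ is $R_T$-invariant, this yields the key geometric inclusion $R_T(\O_a\cap T^+)\subsetneq\O_a\cap\Sigma$.

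I would then apply the moving-plane method for $(-\D)^s$ to the antisymmetric function $w(x):=u_a(x)-u_a(R_T x)$. On the reflected region $D:=R_T(\O_a\cap T^+)\subset\O_a\cap\Sigma$ both $x$ and $R_T x$ lie in $\O_a$, so $(-\D)^s w=c(x)w$ with $c\equiv 0$ if $p=1$ and $c\equiv \l_{s,p}(\O_a)$ if $p=2$. On $(\O_a\cap\Sigma)\setminus D$ one has $w=-u_a\leq 0$, while $w\equiv 0$ on $T$ and on $\Sigma\setminus\O_a$. The antisymmetric maximum principle and the corresponding Hopf-type boundary lemma for the fractional Laplacian then force $w>0$ in $\O_a\cap\Sigma$ and, upon passing to the limit along the inward normal to $\pa B_\tau(a)$,
\begin{equation*}
\big(u_a/\d^s\big)(a+\tau\o^*)>\big(u_a/\d^s\big)(a+\tau\o)\qquad\text{whenever }\o_1>0,
\end{equation*}
where $\o^*:=(-\o_1,\o_2,\dots,\o_N)$.

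Pairing $\o$ with $\o^*$ in the derivative formula gives
\begin{equation*}
f'(t)=\G(1+s)^2\tau^{N-1}\int_{\{\o_1>0\}}\Big[\big(u_a/\d^s\big)^2(a+\tau\o)-\big(u_a/\d^s\big)^2(a+\tau\o^*)\Big]\o_1\,d\sigma(\o)<0
\end{equation*}
for every $t\in(0,1-\tau)$, and together with continuity of $f$ on $[0,1-\tau)$ this gives $\l_{s,p}(\O_a)<\l_{s,p}(\O_0)$ for every $a\in\cA\setminus\{0\}$. The hard part will be the moving-plane step: the nonlocal nature of $(-\D)^s$ forces one to track the equation satisfied by $w$ across the outer boundary (where the reflected function need not vanish even though $u_a$ does) and to use antisymmetric rather than pointwise maximum principles; moreover the interior strict inequality must be upgraded to a strict inequality for the $u/\d^s$ trace on $\pa B_\tau(a)$, which requires boundary Harnack-type estimates in the spirit of those from \cite{RX} already invoked in Theorem~\ref{th:Shape-deriv}.
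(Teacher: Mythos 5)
Your approach is essentially the paper's: translate $B_\tau(a)$ by a compactly supported vector field, apply Corollary~\ref{cor:1.2} (valid for $p\in\{1,2\}$ because of the unique positive minimizer), reduce by rotational invariance to moving $a=te_1$ along $e_1$, and compare $u_a/\delta^s$ on the two halves of $\partial B_\tau(a)$ via the reflection across $T=\{x_1=t\}$ together with the antisymmetric weak and strong maximum principles and the fractional Hopf lemma from \cite{JW17,JW}. The paper works in $T^+=\{x_1>t\}$ with $\Theta=\Omega_a\cap T^+$ and the difference $\bar u-u$, where $\bar u = u\circ R_T$; you work in $\Sigma=\{x_1<t\}$ with $D=R_T(\Theta)$ and $w=u_a-u_a\circ R_T=-(\bar u-u)$. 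These are equivalent reformulations.

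There is, however, a sign error that breaks your maximum-principle step if taken at face value. For $x\in(\Omega_a\cap\Sigma)\setminus D$ one has $x\in\Omega_a$ and $R_Tx\in T^+$ with $R_Tx\notin\Omega_a$ (since $x\notin D$); hence $u_a(R_Tx)=0$ and $w(x)=u_a(x)\ge 0$, not $-u_a\le 0$ as you wrote. The hypothesis of the antisymmetric weak maximum principle is precisely that $w\ge 0$ a.e.\ on $\Sigma\setminus D$, which is what the corrected computation provides; had $w$ been $\le 0$ there, the principle would give $w\le 0$ in $D$, the opposite of what you need. With the sign fixed (together with your correct observation that $w\equiv 0$ on $\Sigma\setminus\Omega_a$, and with $D\subsetneq\Omega_a$ giving $\lambda_{s,2}(D)>c_p$ so the principle applies), the remainder of your argument --- weak then strong maximum principle, Hopf lemma at $\partial B_\tau(a)\cap\Sigma$, pairing $\omega$ with $\omega^*$ in the derivative formula --- coincides with the paper's proof.
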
  
  The proof of Theorem  \ref{th:1.2} is inspired by the argument given in \cite {LL14,Kesavan} for the local case $s=1$. It uses the fractional Hadamard formula in Corollary \ref{cor:1.2} and maximum principles for anti-symmetric functions. Our proof also shows that the map $ a\mapsto\lambda_{s,p}(B_1(0)\setminus \ov{ B_\tau(a)})$ takes its minimum when the boundary of the  ball $B_{\tau}(a)$ touches the one of $B_{1}(0)$, see Section~\ref{S:proofs} below.
  
The proof of Theorem \ref{th:Shape-deriv} is based on the use of test functions in the variational characterization of $\l_{s,p}(\O)$ and $\l_{s,p}(\Phi_\e(\O))$. The general strategy is inspired by the direct approach in \cite{FW18}, which is related to a Neumann eigenvalue problem on manifolds. In the case of $\l_{s,p}(\Phi_\e(\O))$, it is important to make a change of variables so that  $\l_{s,p}(\Phi_\e(\O))$ is determined by minimizing an $\e$-dependent family of seminorms among    functions $u\in \cH^s_0(\Omega)$, see Section \ref{S:Not-per} below. An obvious choice of  test functions are minimizers $u$ and $v_\e$ for $\l_{s,p}( \O)$ and  $\l_{s,p}(\Phi_\e(\O))$, respectively. However, due to the fact that 
$u$ is only of class $C^s$ up to the boundary, we cannot obtain a boundary integral term directly from the divergence theorem. 
In particular, the integration by parts formula given in \cite[Theorem 1.9]{RX-Poh} does not apply to general vector fields $X$ which appear in \eqref{eq:right-der-main-th}. Hence, we need to replace $u$ with $\z_k u$, where $\z_k$ is a cut-off function vanishing in a $\frac{1}{k}$-neighborhood of $\de\O$. This leads to upper and lower estimates of $\l_{s,p}(\Phi_\e(\O))$ up to order $o(\e)$, where the first order term is given by an integral involving $\Ds(\z_k u)$ and $\n (\z_k u)$.  We refer the reader to Section~\ref{S:Onse-side} below for more precise information. 
 A highly nontrivial task is now to pass to the limit as $k\to \infty$ in order to get boundary integrals involving $\psi:= u/\d^s$. This is the most difficult part of the paper. We refer to Proposition~\ref{lem:lim-ok} and Section~\ref{s:proof-Lemma-conv} below for more details.    
 
The paper is organized as follows.  In Section \ref{S:Not-per}, we provide preliminary results on convergence properties of integral functional, inner approximations of functions in $\cH^s_0(\Omega)$ and on properties of minimizers of \eqref{eq:def-lambda-sp}.
In Section~\ref{sec:doma-pert-assoc}, we introduce notation related to domain deformations and related quantities.  In Section \ref{S:Onse-side} we establish a preliminary variant of Theorem~\ref{th:Shape-deriv}, which is given in Proposition~\ref{prop:shape-deriv}. In this variant, the constant $\G(1+s)^2$ in \eqref{eq:right-der-main-th}
is replaced by an implicitly given value which still depends on cut-off data. The proofs of the main results, as stated in this introduction, are then completed in Section \ref{S:proofs}. Finally, Section \ref{s:proof-Lemma-conv} is devoted to the proof of the main technical ingredient of the paper, which is given by Proposition \ref{lem:lim-ok}.\\ 

 \textbf{Acknowledgements:} This work is supported by DAAD and BMBF (Germany) within the project 57385104. The authors would like to thank Sven Jarohs for helpful discussions. M.M Fall's work is supported by the Alexander von Humboldt foundation.

\section{Notations and preliminary results}\label{S:Not-per}

Throughout this section, we fix a bounded open set $\Omega \subset \R^N$. As noted in the introduction, we define the space $\cH^s_0(\Omega)$ as completion of $C^\infty_c(\Omega)$ with respect to the norm $[\,\cdot\,]_{s}$ given in \eqref{eq:def-bNs}. Then $\cH^s_0(\Omega)$ is a Hilbert space with scalar product
$$
(u,v) \mapsto [u,v]_{s}=  \frac{ c_{N,s}}{2}\int_{\R^{N}}\! \int_{\R^{N}}\!\frac{(u(x)-u(y))(v(x)-v(y))}{|x-y|^{N+2s}}dx dy,
$$
where $c_{N,s}$ is given in \eqref{eq:def-bNs}. It is well known and easy to see that $\cH^s_0(\Omega)$ coincides with the closure of $C^\infty_c(\Omega)$ in the standard fractional Sobolev space $H^s(\R^N)$. Moreover, if $\Omega$ has a continuous boundary,
then $\cH^s_0(\Omega)$ admits the highly useful characterization
\begin{equation}
  \label{useful-characterization}
\cH^s_0(\Omega)= \bigl \{ w \in L^1_{loc}(\R^N)\::\: [w]_s^2 < \infty,\quad w \equiv 0\; \text{on $\R^N \setminus \Omega$} \bigr\},  
\end{equation}
see e.g. \cite[Theorem 1.4.2.2]{G11}. We start with an elementary but useful observation.
\begin{lemma}\label{lem:v_ez_k-ok-prelim}
Let $\mu\in L^\infty(\R^N\times \R^N)$, and let $(v_k)_k$ be a sequence in $\cH^s_0(\Omega)$ with $v_k \to v$ in $\cH^s_0(\Omega)$ as $k \to \infty$. Then we have
$$
\lim_{k\to\infty}\int_{\R^{2N}} \frac{(v_k(x)-v_k(y))^2\mu(x,y)}{|x-y|^{N+2s}} dxdy = \int_{\R^{2N}} \frac{(v(x)-v(y))^2\mu(x,y)}{|x-y|^{N+2s}} dxdy.  \\
$$
\end{lemma}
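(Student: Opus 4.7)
The plan is to view the quantity as a bounded bilinear form on $H^s(\R^N)$ and reduce the statement to the continuity of this form. The key algebraic tool is the factorization $a^2 - b^2 = (a-b)(a+b)$, applied pointwise to the difference quotients of $v_k$ and $v$.

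Concretely, I would first write
\begin{align*}
&\int_{\R^{2N}} \frac{(v_k(x)-v_k(y))^2 - (v(x)-v(y))^2}{|x-y|^{N+2s}}\,\mu(x,y)\,dxdy \\
&\qquad = \int_{\R^{2N}} \frac{\bigl((v_k-v)(x)-(v_k-v)(y)\bigr)\bigl((v_k+v)(x)-(v_k+v)(y)\bigr)}{|x-y|^{N+2s}}\,\mu(x,y)\,dxdy.
\end{align*}
Then I would bound the right-hand side by Cauchy--Schwarz (together with the bound $|\mu(x,y)| \leq \|\mu\|_{L^\infty}$) to obtain
$$
\left|\int_{\R^{2N}} \frac{(v_k(x)-v_k(y))^2 - (v(x)-v(y))^2}{|x-y|^{N+2s}}\,\mu(x,y)\,dxdy\right| \leq \frac{2\|\mu\|_{L^\infty}}{b_{N,s}}\, [v_k-v]_{H^s(\R^N)}\,[v_k+v]_{H^s(\R^N)}.
$$

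The conclusion then follows immediately from the hypothesis $v_k \to v$ in $H^s(\R^N)$, since this forces $[v_k-v]_{H^s(\R^N)} \to 0$ while the sequence $[v_k+v]_{H^s(\R^N)}$ remains uniformly bounded by $2\sup_k [v_k]_{H^s(\R^N)} < \infty$. There is no real obstacle here; the only care needed is in justifying the use of Cauchy--Schwarz on $\R^{2N}$ with measure $|x-y|^{-N-2s}\,dxdy$, but this is immediate since each factor lies in the corresponding $L^2$-space precisely by the finiteness of the Gagliardo seminorm.
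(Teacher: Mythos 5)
Your proof is correct and follows essentially the same route as the paper: factor $(v_k(x)-v_k(y))^2-(v(x)-v(y))^2$ as a product of differences and sums, bound $\mu$ by its $L^\infty$ norm, and apply Cauchy--Schwarz with respect to the measure $|x-y|^{-N-2s}\,dxdy$ to obtain the bound $\frac{2\|\mu\|_{L^\infty}}{b_{N,s}}[v_k-v]_{H^s}[v_k+v]_{H^s}\to 0$. No meaningful difference.
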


\begin{proof}
We have   
\begin{align*}
  &\Bigl|\int_{\R^{2N}} \frac{(v_k(x)-v_k(y))^2-(v(x)-v(y))^2\mu(x,y)}{|x-y|^{N+2s}} dxdy\Bigr|\\
  &\le 
\|\mu\|_{L^\infty} \int_{\R^{2N}} \frac{|(v_k(x)-v_k(y))^2-(v(x)-v(y))^2|}{|x-y|^{N+2s}} dxdy, 
\end{align*}
where 
\begin{align*}
&\int_{\R^{2N}} \frac{|(v_k(x)-v_k(y))^2-(v(x)-v(y))^2|}{|x-y|^{N+2s}} dxdy\\ 
&=\int_{\R^{2N}} \frac{|[(v_k(x)-v(x))-(v_k(y)-v(y))][(v_k(x)+v(x))-(v_k(y)+v(y))]|}{|x-y|^{N+2s}} dxdy\\
&\le \frac{2}{c_{N,s}}[v_k-v]_s [v_k+v]_s\: \to\: 0 \qquad \text{as $k \to \infty$.}
\end{align*}
\end{proof}

Throughout the remainder of this paper, we fix $\rho\in C^\infty_c(-2,2)$ with $0 \le \rho \le 1$, $\rho\equiv 1$ on $(-1,1)$, and we define
\begin{equation}
  \label{eq:def-z}
  \z \in C^\infty(\R),\qquad \z(t)=1-\rho(t).
\end{equation}
Moreover, for $k \in \N$, we define the functions
\begin{equation}
  \label{eq:def-z-k-etc}
\rho_k,\: \z_k\: \in\: C^{1,1}(\R^N), \quad \qquad \rho_k(x)= \rho(k \delta(x)), \;\quad \z_k(x)= \z(k \delta(x)).
\end{equation}
We note that the function $\rho_k$ is supported in the $\frac{2}{k}$-neighborhood of the boundary, while the function $\z_k$ vanishes in the $\frac{1}{k}$-neighborhood of the boundary.

\begin{lemma}
\label{inner-approx-property}
Let $\Omega \subset \R^N$ be a bounded Lipschitz domain and let $u \in \cH^s_0(\Omega)$. Moreover, for $k \in \N$, let $u_k := u \zeta_k \in \cH^s_0(\Omega)$ denote inner approximations of $u$. Then we have 
$$
u_k \to u \qquad \text{in $\cH^s_0(\Omega)$.}
$$ 
\end{lemma}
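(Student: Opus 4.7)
The plan is to separate the $L^2$ convergence (routine) from the convergence of the Gagliardo seminorm (where the real work lies). Since $u_k - u = -u\rho_k(\delta)$ by definition of $\zeta_k$, and $|\rho_k(\delta)| \le 1$ with $\rho_k(\delta(x)) \to 0$ for a.e.\ $x \in \R^N$ (because $\rho_k$ is supported in $[-2/k, 2/k]$ and $\partial\Omega$ has Lebesgue measure zero), dominated convergence yields $u_k \to u$ in $L^2(\R^N)$ immediately.

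For the seminorm part, I would apply the Leibniz-type identity
\[
(u\rho_k(\delta))(x) - (u\rho_k(\delta))(y) = \rho_k(\delta(x))[u(x) - u(y)] + u(y)[\rho_k(\delta(x)) - \rho_k(\delta(y))]
\]
together with $(a+b)^2 \le 2a^2 + 2b^2$ to bound $[u-u_k]_{H^s(\R^N)}^2$ by a constant times $A_k + B_k$, where
\[
A_k = \int_{\R^{2N}} \frac{\rho_k(\delta(x))^2 (u(x) - u(y))^2}{|x-y|^{N+2s}}\,dx\,dy, \qquad
B_k = \int_{\R^{2N}} \frac{u(y)^2 (\rho_k(\delta(x)) - \rho_k(\delta(y)))^2}{|x-y|^{N+2s}}\,dx\,dy.
\]
The term $A_k \to 0$ is handled directly by dominated convergence: the integrand is dominated by $(u(x)-u(y))^2/|x-y|^{N+2s} \in L^1(\R^{2N})$ and tends to zero pointwise a.e.

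The technical heart is $B_k \to 0$. Since $u$ vanishes outside $\Omega$, I rewrite $B_k = \int_\Omega u(y)^2 G_k(y)\,dy$ with $G_k(y) = \int_{\R^N} (\rho_k(\delta(x)) - \rho_k(\delta(y)))^2/|x-y|^{N+2s}\,dx$, and establish two properties of $G_k$: (a) $G_k(y) \to 0$ for a.e.\ $y \in \Omega$, which follows because for any fixed $y$ with $\delta(y)>0$ and $k$ large enough, $\rho_k(\delta(y)) = 0$ while the integrand is supported in $\{|\delta(x)|\le 2/k\}$, a set of measure $O(1/k)$ at distance bounded below from $y$; (b) the uniform pointwise bound $G_k(y) \le C\,\delta(y)^{-2s}$, obtained by combining $|\rho_k(\delta(x)) - \rho_k(\delta(y))|^2 \le \min(Ck^2|x-y|^2, 4)$ with a case split on whether $\delta(y) \ge 2/k$ or $\delta(y) < 2/k$, and cutting the $x$-integral at the appropriate scale ($\delta(y)$ or $1/k$, respectively).

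Granting (a) and (b), the fractional Hardy inequality $\int_\Omega u^2 \delta^{-2s}\,dx \le C[u]_{H^s(\R^N)}^2$, valid for $u \in \cH^s_0(\Omega)$ on bounded Lipschitz domains (Dyda, Frank--Seiringer), furnishes an $L^1$-majorant for $u(y)^2 G_k(y)$, and dominated convergence closes the argument. The main obstacle is the uniform control (b): the Lipschitz constant of $\rho_k(\delta)$ is of order $k$, so the Leibniz decomposition by itself cannot yield a $k$-uniform bound; it is precisely the $\delta^{-2s}$-integrability of $u^2$ from Hardy that absorbs the concentration of $\rho_k(\delta)$ near $\partial\Omega$.
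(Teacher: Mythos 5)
Your proof is correct and takes essentially the same route as the paper: both rest on the Leibniz-type splitting of the Gagliardo difference quotient, the Lipschitz bound $|\rho_k(\delta(x)) - \rho_k(\delta(y))| \lesssim \min(k|x-y|, 1)$, and the fractional Hardy-type estimate $\int_\Omega u^2\delta^{-2s}\,dx \lesssim [u]^2_{H^s(\R^N)}$. The only difference is cosmetic — the paper restricts the double integral to the collar $A_{4/k}\times A_{4/k}$ plus a cross term and reads off convergence as the tail of a convergent integral, whereas you package the same ingredients as an explicit dominated-convergence argument with the pointwise majorant $C\,u(y)^2\delta(y)^{-2s}$.
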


\begin{proof}
In the following, the letter $C>0$ stands for various constants independent of $k$. Since $\rho_k= 1 -\z_k$, it suffices to show that 
\begin{equation}
  \label{eq:u-phi-r-claim}
u \rho_k  \in \cH^s_0(\Omega)\; \text{for $k$ sufficiently large}\qquad \text{and}\qquad [u \rho_k]_s \to 0 \quad \text{as $k \to \infty$.}
\end{equation}
For $\eps>0$, we put $A_\eps= \{x \in \Omega\::\: \delta(x)< \eps\}$.
Since $u \rho_k$ vanishes in $\R^N \setminus A_{\frac{2}{k}}$, $0 \le \rho_k \le 1$ on $\R^N$ and $|{\rho_k}(x)-{\rho_k}(y)| \le C \min \{k |x-y|,1\}$ for $x,y \in \R^N$, we observe that
	\begin{align}
	&\frac{1}{c_{N,s}} [{\rho_k} u]^2_s =\frac{1}{2}\int_{\R^N}\int_{\R^N} \frac{[u(x){\rho_k}(x)-u(y){\rho_k}(y)]^2}{|x-y|^{N+2s}} dydx \nonumber\\
        &= \frac{1}{2}\int_{A_{\frac{4}{k}}}\int_{A_{\frac{4}{k}}} \frac{[u(x){\rho_k}(x)-u(y){\rho_k}(y)]^2}{|x-y|^{N+2s}}\ dydx +\int_{A_{{\frac{2}{k}}}} u(x)^2 {\rho_k}(x)^2 \int_{\R^N \setminus A_{\frac{4}{k}}} |x-y|^{-N-2s}\ dydx\nonumber\\
&\le \frac{1}{2}\int_{A_{\frac{4}{k}}}\int_{A_{\frac{4}{k}}} \frac{\bigl[u(x)\bigl({\rho_k}(x)-{\rho_k}(y)\bigr)+ {\rho_k}(y)\bigl(u(x)-u(y)\bigr)\bigr]^2}{|x-y|^{N+2s}}\ dydx\nonumber\\
          &\quad+ C  \int_{A_{{\frac{2}{k}}}} u(x)^2  \dist(x,\R^N \setminus A_{\frac{4}{k}})^{-2s} dx            \nonumber\\
&\le \int_{A_{\frac{4}{k}}} u^2(x) \int_{A_{\frac{4}{k}}}\frac{({\rho_k}(x)-{\rho_k}(y))^2}{|x-y|^{N+2s}}\ dydx
                                                                                                                                + \int_{A_{\frac{4}{k}}}\int_{A_{\frac{4}{k}}}\frac{(u(x)-u(y))^2}{|x-y|^{N+2s}}dydx \nonumber
          \end{align}
\begin{align}
          &\quad+ C\int_{A_{{\frac{2}{k}}}} u(x)^2 \delta^{-2s}(x)dx\nonumber\\
          &\le C k^2 \int_{A_{\frac{4}{k}}} u^2(x)\int_{B_{\frac{1}{k}}(x)} |x-y|^{2-2s-N} dydx + C \int_{A_{\frac{4}{k}}} u^2(x)\int_{\R^N \setminus B_{\frac{1}{k}}(x)} |x-y|^{-N-2s} dydx\nonumber\\
&\quad + \int_{A_{\frac{4}{k}}}\int_{A_{\frac{4}{k}}}\frac{(u(x)-u(y))^2}{|x-y|^{N+2s}}dydx + C  \int_{A_{{\frac{2}{k}}}} u(x)^2  \delta^{-2s}(x) dx \nonumber\\
&\le C k^{2s} \int_{A_{\frac{4}{k}}} u^2(x)dx + \int_{A_{\frac{4}{k}}}\int_{A_{\frac{4}{k}}}\frac{(u(x)-u(y))^2}{|x-y|^{N+2s}}dydx + C  \int_{A_{{\frac{2}{k}}}} u(x)^2 \delta^{-2s}(x) dx\nonumber\\
&\le C \int_{A_{\frac{4}{k}}} u^2(x)\delta^{-2s}(x)dx + \int_{A_{\frac{4}{k}}}\int_{A_{\frac{4}{k}}}\frac{(u(x)-u(y))^2}{|x-y|^{N+2s}}dydx. \label{proof-inner-approximation-1}
	\end{align}
Now, since $\Omega$ has a Lipschitz boundary, using 
$
\int_{\R^N \setminus \Omega}|x-y|^{-N-2s}\,dy\sim \d^{-2s}(x)
$ see e.g \cite{HC}, we get 
$$
\int_{\Omega} u^2(x)\delta^{-2s}(x)dx \le C \int_{\Omega}u^2(x) \int_{\R^N \setminus \Omega}|x-y|^{-N-2s}\,dy dx \le C[u]_s^2, 
$$
and therefore
\begin{equation}
\label{proof-inner-approximation-2}  
\int_{A_{\frac{4}{k}}} u^2(x)\delta^{-2s}(x)dx \to 0 \qquad \text{as $k \to \infty$.}
\end{equation}
Moreover, since also  
$$
\int_{\Omega} \int_{\Omega}\frac{(u(x)-u(y))^2}{|x-y|^{N+2s}}dydx \le \frac{2}{c_{N,s}}[u]_s^2, 
$$
we have 
\begin{equation}
\label{proof-inner-approximation-3}  
\int_{A_{\frac{4}{k}}}\int_{A_{\frac{4}{k}}}\frac{(u(x)-u(y))^2}{|x-y|^{N+2s}}dydx \to 0 \qquad \text{as $k \to \infty$.}
\end{equation}
Combining (\ref{proof-inner-approximation-1}), (\ref{proof-inner-approximation-2}) and (\ref{proof-inner-approximation-3}), we obtain (\ref{eq:u-phi-r-claim}), as required.
\end{proof}

From now on, we fix a bounded $C^{1,1}$-domain $\Omega \subset \R^N$. We also let $$
C^s_0(\ov \O) = \left\lbrace w\in C^s(\ov\O): w = 0\quad\text{in}\quad \R^N\setminus\O \right\rbrace,
$$
and we recall the following regularity and positivity properties of nonnegative minimizers for $\lambda_{s,p}(\Omega)$ as defined in \eqref{eq:def-lambda-sp}. 

\begin{lemma}
\label{reg-prop-minimizers}
Let $u \in \cH_0^s(\Omega)$ be a nonnegative minimizer for $\lambda_{s,p}(\Omega)$. Then $u\in C^s_0(\ov \O) \cap C^\infty_{loc}(\Omega)$. Moreover, $\psi:=\frac{u}{\d^s} \in C^{\a}(\ov\O)$ for some $\alpha \in (0,1)$, and there exists a constant $c=c(N,s,\O,\a,p)>0$ with the property that  
\be \label{eq:Bnd-reg-v-eps}
\|\psi\|_{C^{\a}(\ov\O)}\leq c  
\ee 
and 
\be\label{eq:Grad-bnd-psi-eps}
| \n \psi (x)  | \leq c \d^{\a-1}(x)   \qquad\textrm{ for all $x\in \O$.}
\ee
Moreover, $\psi >0$ on $\overline \Omega$, so in particular $u>0$ in $\Omega$.
\end{lemma}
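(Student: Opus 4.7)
The plan is to bootstrap regularity from the weak formulation \eqref{eq:1.1} in four stages, each leaning on well-known results in the literature. First, since the exponent $p$ is strictly subcritical, a Moser iteration scheme adapted to the fractional Laplacian (in the spirit of Servadei--Valdinoci or Brasco--Lindgren--Parini) applied to $\Ds u=\l_{s,p}(\O)u^{p-1}$ with a cut-off test function yields $u\in L^\infty(\R^N)$, so that the right-hand side $f:=\l_{s,p}(\O)u^{p-1}$ is bounded. Interior smoothness $u\in C^\infty(\O)$ then follows from the positivity of $u$ in $\O$ (given by the strong maximum principle) and a standard bootstrap: each application of interior Schauder-type estimates for $\Ds$ raises the Hölder exponent of $u$ by $2s-\e$, and since $u\mapsto u^{p-1}$ is smooth away from $0$, the iteration terminates with $u\in C^\infty(\O)$.

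Second, with $f\in L^\infty$ and $\O$ of class $C^{1,1}$, I would invoke the boundary regularity theory of Ros-Oton--Serra (cited in the paper as [RX]) to conclude that $u\in C^s(\ov\O)$ and that $\psi=u/\d^s$ extends continuously to $\ov\O$ with $\psi\in C^{\a}(\ov\O)$ for some $\a\in(0,1)$, together with the quantitative bound \eqref{eq:Bnd-reg-v-eps}. The dependence of the constant on $N,s,\O,\a,p$ is obtained by tracking the dependence on $\|f\|_{L^\infty}$ and the $C^{1,1}$-character of $\O$; since $\|u\|_{L^\infty}$ was bounded solely in terms of $\l_{s,p}(\O)$ and these data in the first stage, one recovers a constant of the required form.

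Finally, the weighted gradient bound \eqref{eq:Grad-bnd-psi-eps} I expect to be the most delicate point, and is the step I would attack by a scaling argument. Fix $x_0\in\O$, set $r:=\d(x_0)/4$, and consider the rescaling $\tilde u(y)=r^{-s}u(x_0+ry)$ on $B_1$. A direct computation shows that $\tilde u$ solves $\Ds \tilde u=\l_{s,p}(\O)r^{sp}\tilde u^{p-1}$ on $B_1$, and the bound $u\le C\d^s$ (a consequence of $\psi\in L^\infty(\O)$) gives $\|\tilde u\|_{L^\infty(B_1)}\le C$ uniformly in $x_0$; the far-field growth of $\tilde u$ is controlled by the global $L^\infty$ bound on $u$. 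Interior $C^{1,\a}$-estimates for $\Ds$ then bound $\n \tilde u$ on $B_{1/2}$, yielding $|\n u(x_0)|\le Cr^{s-1}$. The key cancellation is then exploited in
\[
\n\psi(x_0)=\frac{\d(x_0)\n u(x_0)-s\,u(x_0)\n\d(x_0)}{\d(x_0)^{s+1}},
\]
where one writes $u(x_0)=\d(x_0)^s\psi(\bar x_0)+\d(x_0)^s(\psi(x_0)-\psi(\bar x_0))$ with $\bar x_0\in\de\O$ the nearest boundary point, and similarly expands $\n u$. The leading contributions of the two terms in the numerator cancel, and the $C^\a$-control of $\psi$ contributes a factor $\d(x_0)^\a$ to the remainder, producing the desired bound $|\n\psi(x_0)|\le C\d(x_0)^{\a-1}$. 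The main obstacle is tracking this cancellation uniformly in $x_0$; alternatively one can cite an already-established weighted estimate of this type (e.g.\ from [RX] or [JW17]).
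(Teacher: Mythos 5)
Your first three steps ($L^\infty$ bound via iteration, interior bootstrap using smoothness of $t\mapsto t^{p-1}$ and positivity, and the Ros-Oton--Serra boundary theory \cite{RX} for $u\in C^s_0(\ov\O)$ and $\psi\in C^\alpha(\ov\O)$) reproduce the paper's proof almost verbatim, modulo the choice of reference for the $L^\infty$ step (the paper cites \cite[Prop.~1.3]{XJV} rather than Servadei--Valdinoci or Brasco--Lindgren--Parini, but these are interchangeable here).

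Where you diverge is \eqref{eq:Grad-bnd-psi-eps}, and there is a genuine gap. The paper does not prove this bound; it simply cites \cite{FS-2019} (Fall--Jarohs, ``Gradient estimates in fractional Dirichlet problems''), which is a dedicated paper establishing exactly this weighted estimate. Your fallback suggestion to cite \cite{RX} or \cite{JW17} instead does not work: \cite{RX} proves $\psi\in C^\alpha(\ov\O)$ and the cruder gradient estimate $|\n u|\le C\delta^{s-1}$, but the latter combined with the quotient rule only yields $|\n\psi|\le C\delta^{-1}$, not $C\delta^{\alpha-1}$; and \cite{JW17} is about overdetermined problems and contains no such estimate. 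As for your scaling argument: the rescaling $\tilde u(y)=r^{-s}u(x_0+ry)$ with interior $C^{1,\alpha}$ estimates for $\Ds$ gives only the size bound $|\n u(x_0)|\le C\delta(x_0)^{s-1}$, which, as just noted, is not enough. To make the cancellation you invoke actually produce the factor $\delta^\alpha$, you must obtain the refined \emph{expansion} $\n u(x_0)=s\,\psi(\bar x_0)\,\delta(x_0)^{s-1}\n\delta(x_0)+O(\delta(x_0)^{s-1+\alpha})$, and this does not follow from ``similarly expanding $\n u$''; it requires applying the scaling argument to the corrector $w:=u-\psi(\bar x_0)\delta^s$, controlling $(-\Delta)^s\delta^s$ near the boundary and the nonlocal tail of $w$, which are precisely the technical points handled in \cite{FS-2019}. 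Note also that the exponent $\alpha-1$ (rather than $-1$) is genuinely used later: in the proof of Lemma~\ref{first-function-est} it is exactly the extra $\delta^\alpha$ that makes the error term $k^{1-s-\alpha}O(r^{s-1+\alpha})$ vanish as $k\to\infty$, so the refinement cannot be dropped.
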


\begin{proof}
  By standard arguments in the calculus of variations, $u$ is a weak solution of (\ref{eq:1.1}). By \cite[Proposition 1.3]{XJV} we have that $u\in L^\infty(\O)$, and therefore the RHS of (\ref{eq:1.1}) is a function in $L^\infty(\O)$. Thus the regularity up to the boundary $u \in C^s_0(\ov \O)$
is proved in \cite{RX}, where also the $C^\alpha$-bound \eqref{eq:Bnd-reg-v-eps} for the function $\psi= \frac{u}{\d^s}$ is established for some $\alpha>0$. Moreover, \eqref{eq:Grad-bnd-psi-eps} is proved in \cite{FS-2019}. It also follows from (\ref{eq:1.1}), the strong maximum principle and the Hopf lemma for the fractional Laplacian that $\psi$ is a strictly positive function on $\overline \Omega$. In particular, $u>0$ in $\Omega$, Therefore $u \in C^\infty_{loc}(\O)$ follows by interior regularity theory (see e.g. \cite{RS16a}) and the fact that the function $t \mapsto t^{p-1}$ is of class $C^\infty$ on $(0,\infty)$.
\end{proof}

The computation of one-sided shape derivatives as given in Theorem~\ref{th:Shape-deriv} will be carried out in Section~\ref{S:Onse-side}, and it requires the following key technical proposition. Since its proof is long and quite involved,  we postpone the proof to Section \ref{s:proof-Lemma-conv} below.

\begin{proposition}\label{lem:lim-ok}
Let $X \in C^{0}(\overline \Omega,\R^N)$, let $u \in C^s_0(\ov \O)\cap C^1(\O)$, and assume that 
$\psi:=\frac{u}{\d^s}$ extends to a function on $\overline \Omega$ satisfying \eqref{eq:Bnd-reg-v-eps} and \eqref{eq:Grad-bnd-psi-eps}. Moreover, put $U_k:=u \z_k  \in C^{1,1}_c(\Omega)$, where $\z_k$ is defined in (\ref{eq:def-z-k-etc}). Then 
$$
\lim_{k\to \infty}\int_{\Omega}\n U_k \cdot X \Bigl( u   \Ds \z_k  -I(u,\z_k)\Bigr)\,dx=  -\k_s  \int_{\de\O} \psi^2 X\cdot \nu \, dx,
$$
where \be \label{eq:-kappa-s-implicit}
 \k_s:= - \int_{\R}h'(r) \Ds h(r)\, dr \qquad \text{with}\quad h(r):=r_+^s \z(r)=\max(r,0)^s\z(r)
\ee
and $\z$ given in (\ref{eq:def-z}), and where we use the notation
\be\label{eq:def-I-u-v}
  I(u,v)(x):= c_{N,s}\int_{\R^N}\frac{(u(x)-u(y))(v(x)-v(y))}{|x-y|^{N+2s}}\, dy
\ee
for $u\in C^s_c(\R^N)$, $v\in C^{0,1}(\R^N)$ and $x \in \R^N$.
\end{proposition}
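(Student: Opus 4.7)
The plan is a blow-up argument near $\de\O$. I would work in a tubular neighborhood $\{0 \le \d(x) < \eta_0\}$ with $\eta_0$ small enough that $(x',r) \mapsto x' + r\nu(x')$ is a $C^{1,1}$-diffeomorphism from $\de\O\times[0,\eta_0)$ onto it, and rescale by setting $t = kr$ so that $\zeta_k(\d(x)) = \zeta(t)$ becomes $k$-independent. The geometric intuition is that, at scale $1/k$ near $x' \in \de\O$, the domain $\O$ looks like the half-space $\{z\cdot\nu(x') > 0\}$ and $u$ looks like $\psi(x')\,r^s$. Rewriting
$u\Ds v_k - I(u,v_k)(x) = b_{N,s}\,PV\!\int \frac{u(y)(v_k(x) - v_k(y))}{|x-y|^{N+2s}}\,dy$
with $v_k := \zeta_k\circ\d$, both factors of the integrand concentrate in the thin layer $\{0 < \d(x) \lesssim 1/k\}$ with matching orders of magnitude.

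First I would verify that $\{\d(x) \ge \eta_0\}$ contributes $o(1)$: for $k$ with $2/k < \eta_0/2$, $\zeta_k(\d(x)) = 1$ there, so only $y$ with $0 < \d(y) < 2/k$ contribute to $A_k(x) := (u\Ds v_k - I(u,v_k))(x)$. Using $u(y)\le C\d(y)^s \le Ck^{-s}$, $|\{0<\d<2/k\}| = O(1/k)$ and $|x-y|\ge \eta_0/2$, this yields $|A_k(x)| \le Ck^{-s-1}$, while $|\n U_k| = |\n u|$ is locally bounded on $\{\d \ge \eta_0\}$, so the integrated product is $o(1)$.

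On the main region, the core of the proof is to establish the pointwise limits at $x = x' + (t/k)\nu(x')$ with $t > 0$ fixed. From $u = \psi\d^s$, $\n U_k = \zeta_k(\d)\n u + u\zeta_k'(\d)\n\d$, $\d(x) = t/k + O(t^2/k^2)$, and the bounds \eqref{eq:Bnd-reg-v-eps}--\eqref{eq:Grad-bnd-psi-eps}, a direct expansion yields $k^{s-1}\n U_k(x)\cdot X(x) \to \psi(x')\,h'(t)\,X(x')\cdot\nu(x')$. The harder limit is for $A_k$: substituting $y = x + z/k$ in the PV-integral, using $\d(x+z/k) = (t + z\cdot\nu(x'))/k + O(|z|^2/k^2)$ (from the $C^{1,1}$-regularity of $\d$) and $u(x+z/k) = k^{-s}\psi(x')(t+z\cdot\nu(x'))_+^s + o(k^{-s})$ (from $\psi \in C^\a(\ov\O)$), integrating out the tangential variables via the reduction $b_{N,s}\int_{\R^{N-1}}(1+|z''|^2)^{-(N+2s)/2}dz'' = b_{1,s}$, and finally adding and subtracting $t^s\zeta(t) = h(t)$ to exploit the harmonicity identity $\Ds[r_+^s](t) = 0$ for $t > 0$, I would obtain $k^{-s}A_k(x) \to \psi(x')\Ds h(t)$.

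Combining these limits with the Fermi Jacobian $dx = J(x',r)\,dx'\,dr$ ($J \to 1$ as $r \to 0$) and $dr = dt/k$, the rescaled integrand converges pointwise to $\psi^2(x')h'(t)\Ds h(t)X(x')\cdot\nu(x')$. The main obstacle will be producing an integrable majorant uniform in $k$: this requires sharp uniform estimates on $k^{-s}A_k$ valid at all $t > 0$ (via the $C^\a$-modulus of $\psi$ and curvature bounds on $\de\O$, which control all error terms in the Taylor expansions above), together with integrability of $h'(t)\Ds h(t)$ on $\R$. The latter follows from $h'(t) = 0$ on $(-\infty,1)$ and $\Ds h(t) = O(t^{-1-2s})$ at infinity (using $\Ds h = -\Ds[r_+^s\rho]$ via $\Ds[r_+^s] = 0$ on $(0,\infty)$, plus the compact support of $r_+^s\rho$). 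Dominated convergence and $h'(t) = 0$ for $t < 0$ then yield the claimed limit $-\kappa_s\int_{\de\O}\psi^2\,X\cdot\nu\,dx'$.
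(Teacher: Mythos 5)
Your proposal is correct and follows essentially the same route as the paper: Fermi coordinates $(\sigma,r)\mapsto\sigma+r\nu(\sigma)$, the blow-up scaling $r\mapsto r/k$, the pointwise limits $k^{s-1}\nabla U_k\cdot X\to\psi\,h'\,X\cdot\nu$ and $k^{-s}\bigl(u\,\Ds[\z_k\circ\delta]-I(u,\z_k\circ\delta)\bigr)\to\psi\,\Ds h$ obtained via the $s$-harmonicity of $r_+^s$ on $(0,\infty)$, and dominated convergence with a majorant integrable in $r$ over $[0,\infty)$. The one organizational difference is that you recombine the two terms into the single kernel $b_{N,s}\,PV\!\int u(y)\bigl(\z_k(\delta(x))-\z_k(\delta(y))\bigr)|x-y|^{-N-2s}\,dy$ before rescaling, whereas the paper estimates $\Ds[\z_k\circ\delta]$, $u\,\Ds[\z_k\circ\delta]$ and $I(u,\z_k\circ\delta)$ separately in Lemmas~\ref{delta-s-z-k-conv}--\ref{I-s-z-k-conv}; this merges those lemmas into one computation but leaves the technically delicate part -- the uniform majorants valid over the full range $0<r<k\eps'$, which you correctly flag as the main obstacle -- essentially unchanged.
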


\begin{remark}
\label{sec:notat-prel-results-remark}
The minus sign in the definition of the constant $\k_s$ in \eqref{eq:-kappa-s-implicit} might appear a bit strange at first glance. We shall see later that, defined in this way, $\kappa_s$ has a positive value. A priori it is not clear that the value of $\k_s$ does not depend on the particular choice of the function $\z$. This follows a posteriori once we have established in Proposition~\ref{prop:shape-deriv} below that this constant appears in Theorem~\ref{th:Shape-deriv}. This will then allow us to show that $\k_s = \frac{\Gamma(1+s)^2}{2}$ by applying the resulting shape derivative formula to a one-parameter family of concentric balls, see Section~\ref{S:proofs} below. A more direct, but somewhat lengthy computation of $\k_s$ is possible via the logarithmic Laplacian, which has been introduced in \cite{WC}.     
\end{remark}

\section{Domain perturbation and the associated variational problem}
\label{sec:doma-pert-assoc} 

Here and in the following,   we  define $\Omega_\eps: = \Phi_\eps(\Omega)$. In order to study the dependence of $\lambda_{s,p}(\Omega_\eps)$ on $\eps$, it is convenient to pull back the problem on the fixed domain $\Omega$ via a change of variables. For this we let $\textrm{Jac}_{\Phi_\e}$ denote the Jacobian determinant of the map $\Phi_\eps \in C^{1,1}(\R^N)$, and we define the kernels
\be \label{eq:def-K-eps}
K_\e(x,y):=  { c_{N,s}}\frac{\textrm{Jac}_{\Phi_\e}(x)\textrm{Jac}_{\Phi_\e}(y)}{|\Phi_\e(x)-\Phi_\e(y)|^{N+2s}}\qquad\textrm{and} \qquad K_0(x,y)= { c_{N,s}}\frac{1}{|x-y|^{N+2s}}.
\ee
Then \eqref{eq:def-diffeom} gives rise to the well known expansions
\be \label{eq:Jacob}
\textrm{Jac}_{\Phi_\e}(x)=1+\e\textrm{div}X(x)+ O(\e^2),\qquad \partial_\eps 
\textrm{Jac}_{\Phi_\e}(x) = \textrm{div}X(x)+ O(\eps)
\ee
uniformly in $x \in \R^N$, where $X:=\de_\e \big|_{\e=0} \Phi_\e  \in C^{0,1}(\R^N;\R^N)$ and therefore $\textrm{\div}X$ is a.e. defined on $\R^N$. From \eqref{eq:def-diffeom}, we also get
$$
{|\Phi_\e(x)-\Phi_\e(y)|^{-N-2s}}=|x-y|^{-N-2s}\left(  1+ 2\e\frac{x-y}{|x-y|} \cdot P_{X}(x,y)+O(\e^2)     \right)^{-\frac{N+2s}{2}},
$$
and 
$$
\partial_\e {|\Phi_\e(x)-\Phi_\e(y)|^{-N-2s}}=|x-y|^{-N-2s}\left(- (N+2s) \frac{x-y}{|x-y|} \cdot P_{X}(x,y)+O(\e)\right),
$$
uniformly in $x,y \in \R^N$, $x \not = y$ with 
$$
P_X \in L^\infty(\R^N \times \R^N), \qquad P_{X}(x,y)=\frac{X(x)-X(y)}{|x-y|}.
$$
Moreover  by \eqref{eq:Jacob} and  the fact that $\de_\e \Phi_\e$, $X\in C^{0,1}(\R^N)$,   we have  that 
\begin{align}
 K_\e(x,y)=K_0(x,y)+\e\de_\e\Big|_{\e=0}  K_\e(x,y)+O(\e^2) K_0(x,y) , \label{eq:estim-K-eps-1-1}
\end{align}
and 
\begin{align}
\partial_\e K_\e(x,y)=\de_\e\Big|_{\e=0}K_\e(x,y) +O(\e)K_0(x,y), \label{eq:estim-K-eps-1-2}
\end{align}
uniformly in $x,y \in \R^N$, $x \not = y$, where 
\begin{align}
\de_\e\Big|_{\e=0}  K_\e(x,y)=-  \Bigl[(N+2s)\frac{x-y}{|x-y|} \cdot P_{X}(x,y)-&(\textrm{div}X(x)+ \textrm{div}X(y))\Bigr] K_0(x,y).  \label{eq:estim-K-eps-2}
\end{align}
In particular, it follows from (\ref{eq:estim-K-eps-1-1}) and (\ref{eq:estim-K-eps-2}) that there exist $\e_0,C>0$ with the property that 
\be \label{eq:Elliptic-K-e}
\frac{1}{C}K_0(x,y)\leq K_\e(x,y)\leq C K_0(x,y) \qquad\textrm{ for all $x, y\in \R^N$, $x \not =y$ and $\e\in (-\e_0,\e_0)$.}
\ee
For $v\in \cH^s_0(\Omega)$ and $\e\in (-\eps_0,\eps_0)$, we now define   
\be\label{eq:def-j-Uk}
\cV_{v}(\e):= \frac{1}{2}\int_{\R^{2N}}  {(v(x)-v(y))^2}  K_\e(x,y)    dxdy . 
\ee
Then, by \eqref{eq:def-lambda-sp},   \eqref{eq:def-diffeom} and a change of variables,  we have the following variational characterization for $\lambda_{s,p}(\O_\e)$: 
\begin{align}\label{eq:var-carac-lOe}
 \lambda_{s,p}^\e &:= \lambda_{s,p}(\O_\e)=\inf \left\{[u]_s^2\::\:  u\in  \cH^s_0(\Omega_\eps), \quad\int_{\O_\eps} |u|^p\, dx=1  \right\}\nonumber\\
&=\inf \left\{ \cV_{v}(\e) \::\:  v\in  \cH^s_0(\Omega),\quad\int_{\O} |v|^p\textrm{Jac}_{\Phi_\e}(x)\, dx=1  \right\} \quad \text{for $\e \in (-\e_0,\e_0)$.}
\end{align}
As mentioned earlier, we prefer to use (\ref{eq:var-carac-lOe}) from now on where the underlying domain is fixed and the integral terms depend on $\eps$ instead.
It follows from (\ref{eq:estim-K-eps-1-1}) and (\ref{eq:estim-K-eps-1-2}) that, for given $v \in \cH^s_0(\Omega)$, the function $\cV_v: (-\eps_0,\eps_0) \to \R$ is of class $C^1$ with 
\begin{equation}
\label{cV-v-diff-property-zero}
\cV_v'(0) = \frac{1}{2}\int_{\R^{2N}}  {(v(x)-v(y))^2} \de_\e\big|_{\e=0} K_\e(x,y)    dxdy,
\end{equation}
where $\de_\e\big|_{\e=0} K_\e(x,y)$ is given in (\ref{eq:estim-K-eps-2}), 
\begin{equation}
\label{cV-v-deriv-zero-bounded}
|\cV_v'(0)| \le C [v]_s^2 \qquad \text{with a constant $C >0$}
\end{equation}
and we have the expansions 
\begin{equation}
\label{cV-v-diff-property}
\cV_v(\eps) = \cV_v(0)+ \eps \cV_v'(0) + O(\eps^2)[v]_s^2, \qquad
\cV_v'(\eps) = \cV_v'(0) + O(\eps)[v]_s^2 
\end{equation}
with $O(\eps)$, $O(\eps^2)$ independent of $v$. From \eqref{eq:Jacob}, \eqref{eq:Elliptic-K-e} and the variational characterization (\ref{eq:var-carac-lOe}), it is easy to see that 
$$
\frac{1}{C} \le \l_{s,p}^\e \leq C \qquad \text{for all $\e\in (-\e_0,\e_0)$ with some constant $C>0$.} 
$$
Using this and \eqref{eq:Jacob}, \eqref{eq:Elliptic-K-e} once more, we can show that 
\be\label{eq;bound-v-eps}
\frac{1}{C}\leq \|v_\e\|_{L^p(\O)}\leq C\qquad \text{and}\qquad  \frac{1}{C} \le [v_\e]_s\leq C .
\ee
for every $\e\in (-\e_0,\e_0)$ and every minimizer $v_\e \in \cH^s_0(\Omega)$ for (\ref{eq:var-carac-lOe}) with a constant $C>0$. 

The following lemma is essentially a corollary of Lemma~\ref{lem:v_ez_k-ok-prelim}. 

\begin{lemma}
\label{lem:v_ez_k-ok-corollary}
Let $(v_k)_k$ be a sequence in $\cH^s_0(\Omega)$ with 
$v_k \to v$ in $\cH^s_0(\Omega)$. Then we have 
$$
\lim_{k\to\infty}\cV_{v_{k}}(0)= \cV_{v}(0)\qquad \text{and}\qquad  \lim_{k\to\infty}\cV_{v_k}'(0)= \cV_{v}'(0). 
$$
\end{lemma}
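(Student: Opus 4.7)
The strategy is simply to recognize both $\cV_v(0)$ and $\cV_v'(0)$ as integrals of the form covered by Lemma~\ref{lem:v_ez_k-ok-prelim}, that is, integrals of $(v(x)-v(y))^2 |x-y|^{-N-2s}$ against a bounded weight $\mu(x,y)$, and then apply that lemma.

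For $\cV_v(0)$, the definition \eqref{eq:def-j-Uk} together with \eqref{eq:def-K-eps} gives
\[
\cV_v(0) \;=\; \frac{b_{N,s}}{2}\int_{\R^{2N}} \frac{(v(x)-v(y))^2}{|x-y|^{N+2s}}\, dxdy,
\]
which is the integral in Lemma~\ref{lem:v_ez_k-ok-prelim} with the constant weight $\mu \equiv b_{N,s}/2 \in L^\infty(\R^N\times\R^N)$. Hence Lemma~\ref{lem:v_ez_k-ok-prelim} immediately yields $\cV_{v_k}(0) \to \cV_v(0)$.

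For $\cV_v'(0)$, combining \eqref{cV-v-diff-property-zero} with the explicit formula \eqref{eq:estim-K-eps-2} gives
\[
\cV_v'(0) \;=\; \int_{\R^{2N}} \frac{(v(x)-v(y))^2\, \mu(x,y)}{|x-y|^{N+2s}}\, dxdy,
\]
where
\[
\mu(x,y) \;:=\; \frac{b_{N,s}}{2}\Bigl[(N+2s)\,\frac{x-y}{|x-y|}\cdot P_X(x,y) \;-\; \bigl(\div X(x)+\div X(y)\bigr)\Bigr].
\]
Since $X \in C^{0,1}(\R^N;\R^N)$ by the assumption \eqref{eq:def-diffeom}, we have $P_X \in L^\infty(\R^N\times\R^N)$ and $\div X \in L^\infty(\R^N)$, so $\mu \in L^\infty(\R^N\times\R^N)$. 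Applying Lemma~\ref{lem:v_ez_k-ok-prelim} once more yields $\cV_{v_k}'(0) \to \cV_v'(0)$.

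There is no real obstacle here: the only thing to check beyond the direct appeal to Lemma~\ref{lem:v_ez_k-ok-prelim} is the $L^\infty$-boundedness of the weight $\mu$ appearing in the formula for $\cV_v'(0)$, which is immediate from the Lipschitz regularity of $X$.
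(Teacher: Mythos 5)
Your proposal is correct and follows essentially the same route as the paper: both identify the weight $\mu$ in the formula for $\cV_v'(0)$ as an $L^\infty$ function and invoke Lemma~\ref{lem:v_ez_k-ok-prelim}. The only (inessential) difference is that the paper handles $\cV_{v_k}(0)\to\cV_v(0)$ by directly noting $\cV_v(0)=[v]_{H^s(\R^N)}^2$, while you apply Lemma~\ref{lem:v_ez_k-ok-prelim} with constant weight, which amounts to the same thing.
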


\begin{proof}
The first limit is trivial since $\cV_v(0)= [v]_s^2$ for $v \in \cH^s_0(\Omega)$. The second limit follows from Lemma~\ref{lem:v_ez_k-ok-prelim}, (\ref{eq:estim-K-eps-2}) and (\ref{cV-v-diff-property-zero}) by noting that $\mu \in L^\infty(\R^N \times \R^N)$ for the function 
$$
\mu(x,y)=- (N+2s) \frac{x-y}{|x-y|} \cdot P_{X}(x,y) + (\textrm{div}X(x)+ \textrm{div}X(y)).
$$
\end{proof}

\section{One-sided Shape derivative computations}\label{S:Onse-side}
We keep using the notation of the previous sections, and we recall in particular the variational characterization of $\l_{s,p}^\e=\l_{s,p}(\O_\e)$ given in (\ref{eq:var-carac-lOe}). 
The aim of this section is to prove the following  result.
\begin{proposition}\label{prop:shape-deriv}
We have 
\begin{align*}
 \de_\e^+ \Big|_{\e=0}\l_{s,p}^\e
=\min &\left\{ 2  \k_s  \int_{\de\O} (u/\d^s)^2X\cdot \nu \, dx\::\, u\in \calH  \right\},
\end{align*}
where    $\calH$ is the set of positive minimizers for $\l_{s,p}^0:= \l_{s,p}(\O)$, $X:=\de_\e \big|_{\e=0} \Phi_\eps$ and $\k_s$ is given by \eqref{eq:-kappa-s-implicit}. 
\end{proposition}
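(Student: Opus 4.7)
The plan is to establish matching upper and lower bounds on $\de_\e^+\big|_{\e=0}\l_{s,p}^\e$, whose common value will be identified through a key identity that, via Proposition~\ref{lem:lim-ok}, converts the $\e$-derivative of the bilinear form into the desired boundary integral of $(u/\d^s)^2 X\cdot\nu$. For $w\in \cH^s_0(\Omega)$, I set $b_\e(w):=\int_\Omega |w|^p\textrm{Jac}_{\Phi_\e}(x)\,dx$ and $F_w(\e):=\cV_w(\e)/b_\e(w)^{2/p}$; by \eqref{eq:var-carac-lOe}, $\l_{s,p}^\e\le F_w(\e)$ whenever $b_\e(w)>0$. A direct computation using \eqref{eq:Jacob} and \eqref{cV-v-diff-property} gives
\[
F_w'(0)=b_0(w)^{-2/p}\Bigl(\cV_w'(0)-\frac{2\cV_w(0)}{p\,b_0(w)}\int_\Omega |w|^p\,\textrm{div}\,X\,dx\Bigr).
\]
For the upper bound, fix $u\in \calH$ and take $w=U_k:=u(\z_k\circ \d)\in C^{1,1}_c(\Omega)$; by Lemma~\ref{inner-approx-property}, $U_k\to u$ in $H^s(\R^N)$, hence $F_{U_k}(0)\to \l_{s,p}^0$. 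A diagonal argument (selecting $k(\e)\to \infty$ with $F_{U_{k(\e)}}(0)-\l_{s,p}^0\le \e^2$), combined with Lemma~\ref{lem:v_ez_k-ok-corollary}, then yields
\[
\limsup_{\e\to 0^+}\frac{\l_{s,p}^\e-\l_{s,p}^0}{\e}\le \cV_u'(0)-\frac{2\l_{s,p}^0}{p}\int_\Omega u^p\,\textrm{div}\,X\,dx \qquad \text{for every } u\in\calH.
\]

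The core claim is the identity
\[
\cV_u'(0)-\frac{2\l_{s,p}^0}{p}\int_\Omega u^p\,\textrm{div}\,X\,dx=2\k_s\int_{\de\O}\psi^2 X\cdot\nu\,dx, \qquad (\star)
\]
valid for every $u\in \calH$ with $\psi:=u/\d^s$, which I would prove by computing with $U_k$ in place of $u$ and passing to the limit. Since $U_k\in C^{1,1}_c(\Omega)$, the change of variable $\cV_{U_k}(\e)=[U_k\circ\Phi_\e^{-1}]_{H^s(\R^N)}^2$ combined with $\partial_\e|_{\e=0}\Phi_\e^{-1}=-X$ gives $\cV_{U_k}'(0)=-2\int_\Omega \Ds U_k\,(\n U_k\cdot X)\,dx$. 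Splitting $\Ds U_k=u\,\Ds[\z_k\circ \d]+(\z_k\circ \d)\,\Ds u-I(u,\z_k\circ \d)$ via the nonlocal product rule and using $\Ds u=\l_{s,p}^0 u^{p-1}$ in $\Omega$, the contribution of the $(\z_k\circ \d)\,\Ds u$ piece reduces, after invoking $u^{p-1}\n u=p^{-1}\n u^p$, expanding $\n U_k=(\z_k\circ \d)\n u + u\z_k'(\d)\n\d$, and integrating by parts (no boundary term, as $u^p(\z_k\circ \d)^2$ is compactly supported in $\Omega$), to $\frac{2\l_{s,p}^0}{p}\int_\Omega u^p(\z_k\circ \d)^2\,\textrm{div}\,X\,dx$ plus cut-off error terms involving $u^p\z_k'(\d)$ that are $O(k^{-sp})$ in view of $u\le C\d^s$, $|\z_k'|\le Ck$ and $|\{0<\d<2/k\}|=O(1/k)$. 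The remaining contribution $-2\int_\Omega (\n U_k\cdot X)\bigl(u\,\Ds[\z_k\circ \d]-I(u,\z_k\circ \d)\bigr)\,dx$ is exactly the quantity handled by Proposition~\ref{lem:lim-ok} and tends to $2\k_s\int_{\de\O}\psi^2 X\cdot\nu\,dx$. Combining this with $\cV_{U_k}'(0)\to \cV_u'(0)$ from Lemma~\ref{lem:v_ez_k-ok-corollary} and dominated convergence in the $(\z_k\circ\d)^2$-integral establishes $(\star)$.

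For the lower bound, pick $\e_n\to 0^+$ and positive minimizers $v_n\in \cH^s_0(\Omega)$ of \eqref{eq:var-carac-lOe} at $\e=\e_n$; the bounds \eqref{eq;bound-v-eps}, the compact embedding $\cH^s_0(\Omega)\hookrightarrow L^p(\Omega)$, lower semicontinuity of $[\cdot]_{H^s(\R^N)}^2$, continuity of $\e\mapsto \l_{s,p}^\e$ (using \eqref{cV-v-deriv-zero-bounded}), and the strong maximum principle for $\Ds$ produce, along a subsequence, strong convergence $v_n\to v_*$ in $H^s(\R^N)$ for some $v_*\in \calH$. Using $v_n$ as a test function for $\l_{s,p}^0$ gives $\l_{s,p}^0\, b_0(v_n)^{2/p}\le \cV_{v_n}(0)$; expanding $b_0(v_n)^{2/p}=1-\frac{2\e_n}{p}\int_\Omega |v_n|^p\,\textrm{div}\,X\,dx+O(\e_n^2)$ from the constraint $b_{\e_n}(v_n)=1$ and using $\cV_{v_n}(\e_n)=\l_{s,p}^{\e_n}$ with \eqref{cV-v-diff-property} yields
\[
\frac{\l_{s,p}^{\e_n}-\l_{s,p}^0}{\e_n}\ge \cV_{v_n}'(0)-\frac{2\l_{s,p}^0}{p}\int_\Omega |v_n|^p\,\textrm{div}\,X\,dx+O(\e_n).
\]
Passing to the $\liminf$ via Lemma~\ref{lem:v_ez_k-ok-corollary}, $L^p$-convergence of $v_n$ to $v_*$, and $(\star)$ applied to $v_*\in \calH$ gives $\liminf\ge 2\k_s\int_{\de\O}(v_*/\d^s)^2 X\cdot\nu\,dx\ge \inf_{u\in\calH}\bigl\{2\k_s\int_{\de\O}(u/\d^s)^2 X\cdot\nu\,dx\bigr\}$, matching the upper bound and completing the proof. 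The main obstacle is the derivation of $(\star)$, which rests crucially on Proposition~\ref{lem:lim-ok}; the manipulations around the boundary-layer errors introduced by the cut-off $\z_k\circ \d$ must be carried out with care in view of the borderline $C^s(\overline{\Omega})$-regularity of $u$ up to $\de\O$.
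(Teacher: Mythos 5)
Your proposal is correct and follows essentially the same route as the paper: the key identity $(\star)$ is exactly Lemma~\ref{lem:j1kprimes}, proved as you describe by cutting off with $U_k = u(\z_k\circ\d)$, applying the nonlocal product rule and Proposition~\ref{lem:lim-ok}, and passing $k\to\infty$ via Lemma~\ref{lem:v_ez_k-ok-corollary}; the matching upper and lower estimates then combine as in Lemmas~\ref{lem:upper-estim}--\ref{lem:lower-estim}. The only cosmetic difference is in the upper bound: the diagonal argument with $U_{k(\e)}$ is unnecessary, since $\e\mapsto\cV_u(\e)$ is already $C^1$ for any $u\in H^s(\R^N)$ by \eqref{cV-v-diff-property}, so one can use $u$ directly in the quotient $F_u(\e)$ as the paper does; similarly, the change-of-variables derivation of $\cV_{U_k}'(0)=-2\int\n U_k\cdot X\,\Ds U_k\,dx$ is a valid alternative to the paper's kernel-expansion-plus-divergence-theorem proof of Lemma~\ref{lem:j1kprimes-preliminary}.
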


The proof of Proposition \ref{prop:shape-deriv} requires several preliminary results. We start with a formula for the derivative of the function given by  \eqref{eq:def-j-Uk}.

\begin{lemma}\label{lem:j1kprimes-preliminary}
Let $U \in C^{1,1}_c(\O)$. Then
\begin{align}
&\cV_{U}'(0)= -2 \int_{\R^N} \nabla U \cdot X (-\Delta)^s U dx.
\end{align}
\end{lemma}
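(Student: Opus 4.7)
The plan is to perform a change of variable that identifies $\cV_U(\eps)$ with the squared Gagliardo seminorm of the pulled-back function $V_\eps := U \circ \Phi_\eps^{-1}$, and then exploit the bilinearity and symmetry of the associated inner product together with the smoothness of $U$.

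First, changing variables $(x,y)\mapsto(\Phi_\eps^{-1}(x),\Phi_\eps^{-1}(y))$ in \eqref{eq:def-j-Uk} yields
\[
\cV_U(\eps) \;=\; \tfrac{b_{N,s}}{2}\int_{\R^{2N}}\frac{(V_\eps(x)-V_\eps(y))^2}{|x-y|^{N+2s}}\,dx\,dy \;=\; [V_\eps]_{H^s(\R^N)}^2 .
\]
Because $U\in C^{1,1}_c(\O)$ and $\Phi_\eps$ is a global diffeomorphism of $\R^N$ for $|\eps|$ small (by \eqref{eq:def-diffeom}), the function $V_\eps$ lies in $C^{1,1}_c(\R^N)$ with support contained in a fixed compact subset of $\O$ uniformly in small $\eps$. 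In particular, Plancherel's identity gives $[V_\eps]_{H^s(\R^N)}^2=\int_{\R^N} V_\eps\,(-\Delta)^sV_\eps\,dx$.

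Next, I would compute $\partial_\eps V_\eps$ at $\eps=0$. Differentiating the identity $\Phi_\eps(\Phi_\eps^{-1}(x))=x$ at $\eps=0$ and using $\Phi_0=\id_{\R^N}$, $\partial_\eps\Phi_\eps|_{\eps=0}=X$ yields $\partial_\eps\Phi_\eps^{-1}\big|_{\eps=0}=-X$, so the chain rule produces
\[
W(x)\;:=\;\partial_\eps V_\eps\big|_{\eps=0}(x)\;=\;-\nabla U(x)\cdot X(x).
\]
Introducing the symmetric bilinear form $B_s(u,v):=\tfrac{b_{N,s}}{2}\int_{\R^{2N}}\frac{(u(x)-u(y))(v(x)-v(y))}{|x-y|^{N+2s}}\,dx\,dy$ that polarizes $[\cdot]_{H^s(\R^N)}^2$, one has $\cV_U(\eps)=B_s(V_\eps,V_\eps)$, and the $C^{1,1}_c$-regularity of $U$ yields $B_s(W,U)=\int_{\R^N}W\,(-\Delta)^sU\,dx$. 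Bilinearity and symmetry of $B_s$ then produce
\[
\cV_U'(0)\;=\;2\,B_s(W,U)\;=\;2\int_{\R^N} W\,(-\Delta)^s U\,dx\;=\;-2\int_{\R^N}\nabla U\cdot X\,(-\Delta)^sU\,dx,
\]
which is exactly the claimed formula.

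The main technical point is to justify the differentiation, i.e.\ to show that $\eps\mapsto V_\eps$ is of class $C^1$ as a map into $H^s(\R^N)$ with derivative $W$ at $\eps=0$, so that the product rule applies to $B_s(V_\eps,V_\eps)$. Since $U$ is $C^{1,1}$ with compact support in $\O$ and $\eps\mapsto\Phi_\eps$ is $C^2$ with values in $C^{0,1}(\R^N,\R^N)$, a pointwise Taylor expansion gives $V_\eps=U+\eps W+\eps^2 r_\eps$ with $r_\eps$ uniformly bounded in $C^{0,1}_c$ over some fixed compact set in $\O$; this transfers to $H^s(\R^N)$ via a direct Gagliardo seminorm estimate using the Lipschitz bounds and the common compact support. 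An alternative (and essentially equivalent) route avoids $V_\eps$ altogether: differentiate \eqref{eq:def-j-Uk} under the integral sign using \eqref{eq:estim-K-eps-1-1}--\eqref{eq:estim-K-eps-2} (which is legal since $U$ is Lipschitz, making the singular integrand dominated by $|x-y|^{2-N-2s}$ on the diagonal) and then recognize the resulting kernel integral through a routine integration by parts, but the change-of-variable route is cleaner.
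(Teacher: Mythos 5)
Your change-of-variable strategy --- identifying $\cV_U(\eps)$ with $[V_\eps]_{H^s(\R^N)}^2$ for $V_\eps = U \circ \Phi_\eps^{-1}$ and then differentiating via polarization --- is genuinely different from the paper's proof and, if the differentiability of $\eps \mapsto V_\eps$ in $H^s$ is properly established, it correctly yields the stated formula. The paper instead computes $\cV_U'(0)$ directly from the explicit kernel expansion \eqref{eq:estim-K-eps-2}, using $\nabla |z|^{-N-2s} = -(N+2s)z|z|^{-N-2s-2}$ and the divergence theorem inside the double integral, passing to a truncated operator $(-\Delta)^s_\mu$ and sending $\mu \to 0$ with dominated convergence. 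Your route is conceptually cleaner and avoids the kernel manipulation entirely; the paper's route avoids having to discuss the regularity of $\eps \mapsto V_\eps$. Your parenthetical ``alternative route'' at the end is essentially what the paper does.

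There is, however, a genuine inaccuracy in your justification of the key technical point. You claim that $V_\eps = U + \eps W + \eps^2 r_\eps$ with $r_\eps$ \emph{uniformly bounded in $C^{0,1}_c$}. This is false when $U$ is merely $C^{1,1}$. Writing $\phi_\eps := \Phi_\eps^{-1} = \id - \eps X + O(\eps^2)$, the remainder is
\[
r_\eps = \frac{1}{\eps^2}\Bigl[\, U\circ\phi_\eps - U - \nabla U\cdot(\phi_\eps - \id)\Bigr] + \frac{1}{\eps^2}\,\nabla U\cdot\bigl(\phi_\eps - \id + \eps X\bigr).
\]
The second summand is harmless, but the first involves a second-order Taylor remainder of $U$, and since $\nabla U$ is only Lipschitz (not $C^1$), one gets $\|r_\eps\|_{L^\infty} = O(1)$ but only $\mathrm{Lip}(r_\eps) = O(1/\eps)$, not $O(1)$. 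A one-dimensional example $U(x) = x|x|$ near the origin, $\Phi_\eps(x) = x+\eps$, already shows the Lipschitz constant of $r_\eps$ blows up like $1/\eps$.

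Fortunately the conclusion you need --- that $\|V_\eps - U - \eps W\|_{H^s} = o(\eps)$ --- survives precisely because $s<1$, but the argument needs to be redone. With $\|r_\eps\|_{L^\infty} \le C$, $\mathrm{Lip}(r_\eps) \le C/\eps$ and a fixed compact support, split the Gagliardo seminorm of $r_\eps$ at scale $|x-y|=\eps$: the near-diagonal part is controlled by the Lipschitz bound and gives $O(\eps^{-2s})$, the far part by the $L^\infty$ bound and gives $O(\eps^{-2s})$, so $[r_\eps]_{H^s} = O(\eps^{-s})$ and $\eps^2[r_\eps]_{H^s} = O(\eps^{2-s}) = o(\eps)$. (Equivalently: interpolate $\|V_\eps - U - \eps W\|_{L^2} = O(\eps^2)$ and $\|V_\eps - U - \eps W\|_{H^1} = O(\eps)$ to get $O(\eps^{2-s})$ in $H^s$.) Once this is corrected, the polarization argument goes through and you recover the paper's Lemma~\ref{lem:j1kprimes-preliminary}.
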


\begin{proof}
By \eqref{eq:estim-K-eps-2}, (\ref{cV-v-diff-property}) and Fubini's theorem, we have 
\begin{align*}
\cV_{U}'(0)=& \frac{-(N+2s) c_{N,s}}{2}\int_{\R^{2N}}  {(U(x)-U(y))^2}\frac{(x-y)\cdot(X(x)-X(y))}{|x-y|^{N+2s+2}}   dx dy \nonumber\\
&+ \frac{1}{2}\int_{\R^{2N}}  {(U(x)-U(y))^2}  K_0(x,y)  (\textrm{div}X(x)+ \textrm{div}X(y))dxdy\\
=& \frac{-(N+2s) c_{N,s}}{2} \lim_{\mu \to 0} \int_{|x-y|>\mu}  {(U(x)-U(y))^2}\frac{(x-y)\cdot(X(x)-X(y))}{|x-y|^{N+2s+2}}   dxdy \nonumber\\
&+ \int_{\R^{2N}}  {(U(x)-U(y))^2}  K_0(x,y)\textrm{div}X(x) dxdy\\
=&-(N+2s) c_{N,s} \lim_{\mu \to 0} \int_{\R^N} \int_{\R^N \setminus \overline{B_\mu(y)}} {(U(x)-U(y))^2}\frac{(x-y)\cdot X(x)}{|x-y|^{N+2s+2}}   dxdy \nonumber\\
&+ \int_{\R^{2N}}  {(U(x)-U(y))^2}  K_0(x,y)\textrm{div}X(x) dxdy
\end{align*}
{\cred Applying, for fixed $y \in \R^N$ and $\mu>0$, the divergence theorem in the domain $\{x \in \R^{N}\::\: |x-y| > \mu \}$ and using that $\n_{x} |x-y|^{-N-2s}=-(N+2s)\frac{x-y}{|x-y|^{N+2s+2}}$,} we obtain
\begin{align}
\cV_{U}'(0) =&c_{N,s} \lim_{\mu\to 0} \int_{\R^N} \int_{\R^N \setminus \overline{B_\mu(y)}}  {(U(x)-U(y))^2} \n_{x} |x-y|^{-N-2s} \cdot X(x)  dxdy \nonumber\\
&+ \int_{\R^{2N}}  {(U(x)-U(y))^2}  K_0(x,y)\textrm{div}X(x) dxdy \nonumber\\
=&- \lim_{\mu\to 0} \int_{\R^N} \int_{\R^N \setminus \overline{B_\mu(y)}}  {(U(x)-U(y))^2}  K_0(x,y) \textrm{div}X(x)dxdy   \nonumber\\
& -  \lim_{\mu\to 0}\int_{\R^N} \int_{\R^N \setminus \overline{B_\mu(y)}} {(U(x)-U(y))\n U(x) \cdot X(x) }  K_0(x,y)   dxdy  \nonumber\\
&+   \lim_{\mu\to 0}  \int_{\R^N} \int_{\partial B_\mu(y)}  {(U(x)-U(y))^2}   \frac{y-x}{|x-y|}\cdot X(x)  K_0(x,y)       \, d\s(y)\,dx  \nonumber\\
&+ \int_{\R^{2N}}  {(U(x)-U(y))^2}  K_0(x,y)  \textrm{div}X(x)dxdy\nonumber\\
=& -  \lim_{\mu\to 0}\int_{|x-y|>\mu} (U(x)-U(y))\n U(x) \cdot X(x)   K_0(x,y)   d(x,y)  \nonumber\\
&+   \lim_{\mu\to 0}\mu^{-N-1-2s}  \int_{|x-y|=\mu}  {(U(x)-U(y))^2} (y-x) \cdot X(x)    \, d\s(x,y)\nonumber\\
=&-  \frac{c_{N,s}}{2} \lim_{\mu\to 0}\int_{\R^N}\n U(x) \cdot X(x)  \int_{\R^N \setminus \overline{B_\mu(0)}} \frac{2 U(x)-U(x+z)-U(x-z)}{|z|^{N+2s}}   dz dx  \nonumber\\
&+  \frac{1}{2} \lim_{\mu\to 0}\mu^{-N-1-2s}  \int_{|x-y|= \mu}  {(U(x)-U(y))^2} (y-x) \cdot (X(x)-X(y))    \, d\s(x,y) \label{revision-eq-1}
\end{align}
Since $U \in C^{1,1}_c(\Omega)$,  we have that
\begin{align}
  &\frac{c_{N,s}}{2} \lim_{\mu\to 0}\int_{\R^N} \n U(x) \cdot X(x) \int_{\R^N \setminus \overline{B_\mu(0)}} \frac{2 U(x)-U(x+z)-U(x-z)}{|z|^{N+2s}}   dz dx\nonumber \\
  &= \frac{c_{N,s}}{2} \int_{\R^N}\n U(x) \cdot X(x)  \int_{\R^N} \frac{2 U(x)-U(x+z)-U(x-z)}{|z|^{N+2s}}   dz dx\nonumber\\
&= \int_{\R^N} (-\Delta)^sU(x)  \n U(x) \cdot X(x) dx. \label{revision-eq-2}
\end{align}
Moreover, since $U$ is compactly supported, we may fix $R>0$ large enough such that
$(U(x)-U(y))^2 = 0$ for all $x,y \in B_R(0)$ with $|x-y|<1$.
Setting $N_\mu:= \{(x,y) \in B_R(0) \times B_R(0) \::\: |x-y|= \mu\}$ for $0<\mu<1$ and using that
$U, X \in C^{0,1}(\R^N)$, we thus deduce that 
\begin{align}
  &\mu^{-N-1-2s} \int_{|x-y|= \mu}  {(U(x)-U(y))^2} (y-x) \cdot (X(x)-X(y))    \, d\s(x,y)\nonumber\\
  &= \mu^{-N-1-2s} \int_{N_\mu}  {(U(x)-U(y))^2} (y-x) \cdot (X(x)-X(y))\, d\s(x,y) = O(\mu^{3-1-2s}) \to 0,\label{revision-eq-3}
\end{align}
as $\mu \to 0$, since the $2N-1$-dimensional measure of the set $N_\mu$ is of order $O(N-1)$ as $\mu \to 0$.
The claim now follows by combining \eqref{revision-eq-1},\eqref{revision-eq-2} and \eqref{revision-eq-3}.
\end{proof}

We cannot apply Lemma~\ref{lem:j1kprimes-preliminary} directly to minimizers $u \in \cH^s_0(\Omega)$ of $\l_{s,p}(\O)$ since these are not contained in $C^{1,1}_c(\Omega)$. The aim is therefore to apply Lemma~\ref{lem:j1kprimes-preliminary} to $U_k:=u \z_k \in C^{1,1}_c(\Omega)$ with $\z_k$ given in (\ref{eq:def-z-k-etc}), and to use Proposition~\ref{lem:lim-ok}. This leads to the following derivative formula which plays a key role in the proof of Proposition~\ref{prop:shape-deriv}.
\begin{lemma}\label{lem:j1kprimes}
Let $u\in \cH^s_0(\Omega)$ be a solution to \eqref{eq:1.1}.   Then we have 
$$
\cV_u'(0)=
\frac{2\l_{s,p}(\O) }{p}\int_{\O} u^p \div \,X \,dx + 2  \k_s  \int_{\de\O} (u/\d^s)^2 X\cdot \nu\, dx.
$$   
\end{lemma}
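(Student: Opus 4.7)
The plan is to apply Lemma~\ref{lem:j1kprimes-preliminary} to the inner truncations $U_k:=u(\zeta_k\circ\delta)$ and then pass to the limit $k\to\infty$. First I would observe that $U_k\in C^{1,1}_c(\O)$: indeed $u\in C^\infty(\O)\cap C^s_0(\ov\O)$ by Lemma~\ref{reg-prop-minimizers}, $\delta\in C^{1,1}(\R^N)$, and $\zeta_k\circ\delta$ vanishes on the neighborhood $\{|\delta|<1/k\}$ of $\de\O$ while $u$ vanishes outside $\O$. By Lemma~\ref{inner-approx-property}, $U_k\to u$ in $H^s(\R^N)$, and hence Lemma~\ref{lem:v_ez_k-ok-corollary} gives $\cV_{U_k}'(0)\to\cV_u'(0)$. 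It therefore suffices to identify the limit of
\[
\cV_{U_k}'(0) \;=\; -2\int_{\R^N}\n U_k\cdot X\,\Ds U_k\,dx \;=\; -2\int_{\O}\n U_k\cdot X\,\Ds U_k\,dx,
\]
supplied by Lemma~\ref{lem:j1kprimes-preliminary} (the integral reduces to $\O$ because $\n U_k$ is supported there).

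Next I would compute $\Ds U_k$ on $\O$ via the pointwise product rule
\[
\Ds(fg) \;=\; f\Ds g + g\Ds f - I(f,g),
\]
which follows directly from the algebraic identity $f(x)g(x)-f(y)g(y)=f(x)(g(x)-g(y))+g(x)(f(x)-f(y))-(f(x)-f(y))(g(x)-g(y))$ together with \eqref{eq:defDs} and \eqref{eq:def-I-u-v}. Combined with the Euler--Lagrange equation \eqref{eq:1.1}, this yields on $\O$
\[
\Ds U_k \;=\; \l_{s,p}(\O)\,(\zeta_k\circ\delta)\,u^{p-1} + u\,\Ds(\zeta_k\circ\delta) - I(u,\zeta_k\circ\delta),
\]
and I would split $\cV_{U_k}'(0)=A_k+B_k$, with $B_k$ carrying the last two terms. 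Proposition~\ref{lem:lim-ok} applies to $B_k$ directly and gives
\[
B_k \;\longrightarrow\; 2\k_s\int_{\de\O}(u/\delta^s)^2\,X\cdot\nu\,dx.
\]
For $A_k=-2\l_{s,p}(\O)\int_\O \n U_k\cdot X\,(\zeta_k\circ\delta)u^{p-1}\,dx$, writing $g=\zeta_k\circ\delta$, expanding $\n U_k=g\n u+u\n g$, using $u^{p-1}\n u=p^{-1}\n u^p$ (valid since $u>0$ in $\O$), and integrating $\n u^p$ by parts (with no boundary contribution because $g^2$ vanishes on a neighborhood of $\de\O$), I arrive at
\[
A_k \;=\; \frac{2\l_{s,p}(\O)}{p}\int_\O u^p g^2\,\div X\,dx \;+\; 2\l_{s,p}(\O)\Bigl(\tfrac{1}{p}-\tfrac{1}{2}\Bigr)\int_\O u^p\,X\cdot\n(g^2)\,dx.
\]
Dominated convergence sends the first integral to $\frac{2\l_{s,p}(\O)}{p}\int_\O u^p\,\div X\,dx$, which is precisely the first term of the claim.

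The main technical checkpoint internal to this proof (modulo Proposition~\ref{lem:lim-ok}, whose proof is deferred to Section~\ref{s:proof-Lemma-conv}) is to show that the second integral in $A_k$ vanishes as $k\to\infty$. For this I would exploit that $\n(g^2)$ is supported in the strip $\{1/k<\delta<2/k\}$ and satisfies $\|\n(g^2)\|_\infty\le Ck$, while the boundary estimate $u\le C\delta^s$ from Lemma~\ref{reg-prop-minimizers} (or from $\psi\in C^\a(\ov\O)$ together with $u=\psi\delta^s$) yields $u^p\le C\delta^{ps}\le Ck^{-ps}$ on that strip. Since the strip has Lebesgue measure $O(1/k)$, the whole integral is bounded by $Ck\cdot k^{-ps}\cdot k^{-1}=Ck^{-ps}\to 0$, using $ps>0$. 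Combining the limits of $A_k$ and $B_k$ produces the formula for $\cV_u'(0)$ asserted in the lemma.
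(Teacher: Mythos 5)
Your proposal follows the paper's proof essentially verbatim: truncate to $U_k=u(\z_k\circ\d)$, apply Lemma~\ref{lem:j1kprimes-preliminary} and the fractional product rule, invoke Proposition~\ref{lem:lim-ok} for the nonlocal piece, and integrate by parts in the local piece $A_k$. The only minor difference is that you supply an explicit quantitative bound $Ck\cdot k^{-ps}\cdot k^{-1}\to 0$ for the vanishing of the remainder integral involving $\n(\z_k\circ\d)$, where the paper simply remarks that this follows from $u^p\in C^s_0(\ov\O)$ and the support properties of $\z_k$.
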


\begin{proof}
By Lemma~\ref{reg-prop-minimizers} and since $\Omega$ is of class $C^{1,1}$, we have $U_k:=u \zeta_k\in C^{1,1}_c(\Omega) \subset \cH^s_0(\Omega)$ for $k \in \N$, and $U_k \to u$ in $\cH^s_0(\Omega)$ by Lemma~\ref{inner-approx-property}. Consequently, $\cV_u'(0)= \lim \limits_{k \to \infty}\cV_{U_k}'(0)$ by Corollary~\ref{lem:v_ez_k-ok-corollary}, so it remains to show that 
\begin{equation}
  \label{eq:remains-to-show}
\lim_{k \to \infty}\cV_{U_k}'(0) =  \frac{2\l_{s,p}(\O) }{p}\int_{\O}   u^p \div X \,dx  + 2  \k_s  \int_{\de\O} (u/\d^s)^2 X\cdot \nu\, dx.
\end{equation}
Applying Lemma~\ref{lem:j1kprimes-preliminary} to $U_k$, we find that 
$$
\cV_{U_k}'(0)= -2 \int_{\R^N} \nabla U_k \cdot X (-\Delta)^s U_k dx  \qquad \text{for $k \in \N$.}
$$
By the standard product rule for the fractional Laplacian, we have $\Ds U_k=u\Ds \z_k + \z_k \Ds u-I(u, \z_k)$ with $I(u, \z_k)$ given by \eqref{eq:def-I-u-v}. We thus obtain
\begin{align}
&\cV_{U_k}'(0)=- 2\int_{\R^N}  \n U_k \cdot X  \z_k  \Ds u\, dx-2\int_{\R^N}  [\n U_k \cdot X]  u \Ds \z_k \, dx \label{eq:first-est-jk1-pr}
\\
&\qquad\qquad+  2 \int_{\R^N}  \n U_k \cdot X I(u, \z_k)\, dx \nonumber\\
&=-2\l_{s,p}(\O) \int_{\O}  \n U_k \cdot X  \z_k u^{p-1}\, dx -2 \int_{\R^N}\n U_k \cdot X \Bigl( u   \Ds \z_k  - I(u, \z_k)\Bigr)\,dx,\nonumber
\end{align}
where we used that $\Ds u=\l_{s,p}(\O) u^{p-1}$ in $\O$. Consequently, Proposition~\ref{lem:lim-ok} yields that 
\begin{equation}
\label{limit-formula-prelim}
\lim_{k \to \infty}\cV_{U_k}'(0)= -2\l_{s,p}(\O) \lim_{k \to \infty}  \int_{\O}  \n U_k\cdot X \z_k u^{p-1}\, dx + 2 \k_s  \int_{\de\O} \psi^2 X\cdot \nu\, dx.
\end{equation}
Moreover, integrating by parts, we obtain, for $k \in \N$, 
\begin{align}
\int_{\O}  [\n U_k\cdot &X] \z_k u^{p-1}\, dx = \frac{1}{p} \int_{\O}  [\n u^{p} \cdot X] \z_k^2  \, dx+ \int_{\O}  [\n 
\z_k \cdot X] \z_k u^{p}\, dx \nonumber \\
&=-\frac{1}{p}\int_{\O}   u^p \div X \z_k^2 \,dx -\frac{2}{p}\int_{\O}   u^p \z_k [X\cdot\n \z_k] \,dx + \int_{\O} u^{p} \z_k  [X\cdot \n \z_k]  \, dx.\label{eq:rho1-prime}
\end{align}
Since $u^p\in C^s_0(\ov\O)$ by Lemma~\ref{reg-prop-minimizers}, it is easy to see from the definition of $\z_k$ that the last two terms in \eqref{eq:rho1-prime} tend to zero as $k\to \infty$, whereas
$$
\lim_{k\to\infty}\int_{\O}   u^p\textrm{div} X \z_k^2 \,dx= \int_{\O}   u^p\textrm{div}  X \,dx. 
$$
Hence 
$$
\lim_{k \to \infty} \int_{\O}  \n U_k\cdot X \z_k u^{p-1} \, dx
= - \frac{1}{p} \int_{\O}   u^p\textrm{div}  X \,dx. 
$$
Plugging this into (\ref{limit-formula-prelim}), we obtain (\ref{eq:remains-to-show}), as required. 
\end{proof}

Our next lemma provides an upper estimate for $ \de_\e^+ \Big|_{\e=0}\l_{s,p}^\e$.

\begin{lemma}\label{lem:upper-estim} Let $u\in \cH$ be a positive minimizer for $\lambda_{s,p}^0 = \l_{s,p}(\O)$. Then 
\begin{align}\label{eq:upper-bnd}
\limsup_{\eps \to 0^+}\frac{\l_{s,p}^\e-\l_{s,p}^0}{\e} \leq& 2  \k_s  \int_{\de\O} (u/\d^s)^2 X\cdot \nu\, dx .
\end{align}
\end{lemma}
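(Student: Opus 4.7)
The plan is to use $u$ itself as a building block for a test function in the variational characterization \eqref{eq:var-carac-lOe} of $\lambda_{s,p}^\e$, and then use the derivative formula from Lemma~\ref{lem:j1kprimes}. Specifically, since $\|u\|_{L^p(\O)}=1$ but $u$ is in general not admissible for $\lambda_{s,p}^\e$, I would normalize it by setting
$$
v_\e:= c_\e\, u, \qquad \text{where}\qquad c_\e := \Bigl(\int_\O u^p\,\mathrm{Jac}_{\Phi_\e}(x)\,dx\Bigr)^{-1/p}.
$$
Then $v_\e\in \cH^s_0(\O)$ satisfies the constraint in \eqref{eq:var-carac-lOe}, and the variational principle gives the one-sided estimate
$$
\lambda_{s,p}^\e \;\le\; \cV_{v_\e}(\e)\;=\; c_\e^{2}\,\cV_u(\e).
$$

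Next I would expand both factors in $\e$. Using \eqref{eq:Jacob} and $\|u\|_{L^p(\O)}=1$, one gets
$$
\int_\O u^p \,\mathrm{Jac}_{\Phi_\e}\,dx \;=\; 1+\e\int_\O u^p\,\div X\,dx +O(\e^2),
$$
so $c_\e^{2} = 1 - \tfrac{2\e}{p}\int_\O u^p\,\div X\,dx + O(\e^2)$. On the other hand, by \eqref{cV-v-diff-property} applied to $u\in H^s(\R^N)$,
$$
\cV_u(\e)=\cV_u(0)+\e\,\cV_u'(0)+O(\e^2)=\lambda_{s,p}^0+\e\,\cV_u'(0)+O(\e^2).
$$
Multiplying these expansions yields
$$
\cV_{v_\e}(\e)=\lambda_{s,p}^0+\e\Bigl(\cV_u'(0)-\frac{2\lambda_{s,p}^0}{p}\int_\O u^p\,\div X\,dx\Bigr)+O(\e^2).
$$

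Now I would invoke Lemma~\ref{lem:j1kprimes}, which identifies the bracketed coefficient precisely: the term $\frac{2\lambda_{s,p}^0}{p}\int_\O u^p\,\div X\,dx$ in $\cV_u'(0)$ cancels exactly with the Jacobian contribution from the normalization, leaving
$$
\cV_{v_\e}(\e)\;=\;\lambda_{s,p}^0 + 2\e\,\k_s\int_{\de\O}(u/\d^s)^2\,X\cdot\nu\,dx + O(\e^2).
$$
Dividing by $\e>0$ and passing to the $\limsup$ as $\e\to 0^+$ delivers \eqref{eq:upper-bnd}. No substantive obstacle arises in this lemma, since all the delicate analysis (the passage through cut-offs $\z_k\circ\delta$ and the boundary limit involving $\psi=u/\d^s$) has already been absorbed into Lemma~\ref{lem:j1kprimes}; here the computation is essentially algebraic, with the cancellation of the bulk term $\int_\O u^p\,\div X\,dx$ being the one point to verify carefully.
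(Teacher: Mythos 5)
Your proposal is correct and is essentially identical to the paper's proof: your $c_\eps^2\,\cV_u(\eps)$ is exactly the quotient $j(\eps)=\cV_u(\eps)/\tau(\eps)$ the paper uses, and in both cases the inequality $\lambda_{s,p}^\eps\le j(\eps)$ with $j(0)=\lambda_{s,p}^0$ is combined with the derivative formula from Lemma~\ref{lem:j1kprimes} to cancel the bulk term $\frac{2\lambda_{s,p}^0}{p}\int_\Omega u^p\,\div X\,dx$. You merely write out the expansion-and-cancellation step that the paper leaves implicit in computing $j'(0)$.
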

\begin{proof}
For $\eps \in (-\eps_0,\eps_0)$, we define 
$$
j(\eps):=\frac{\cV_{u}(\e)}{\tau(\e)}\quad \text{for $k \in \N$ with} 
\quad \tau(\e):=  \left( \int_{\O}|u|^p  \textrm{Jac}_{\Phi_\e}(x)\, dx \right)^{2/p}.
$$
By \eqref{eq:var-carac-lOe}, we then have $\l_{s,p}^\e \leq j(\e)$ for $\eps \in (-\eps_0,\eps_0)$. Moreover, 
$$
\tau(0)=\|u\|_{L^p(\Omega)}^{2/p}=1, \quad \cV_{u}(0)= [u]_s^2 = \l_{s,p}(\O)\quad \text{and}\quad j(0)= \frac{\cV_{u}(0)}{\tau(0)} = \l_{s,p}^0,
$$
which implies that  
$$
\de_\e^+ \Big|_{\e=0}\l_{s,p}^\e\leq j'(0) = 2  \k_s  \int_{\de\O} (u/\d^s)^2 X\cdot \nu \, dx,
$$ by Lemma \ref{lem:j1kprimes} and \eqref{eq:Jacob}, as claimed.
\end{proof}

Next, we shall prove a lower estimate for $ \de_\e^+ \Big|_{\e=0}\l_{s,p}^\e$.

\begin{lemma}\label{lem:lower-estim}
We have 
$$
\liminf_{\e\searrow 0^+}\frac{\l_{s,p}^\e-\l_{s,p}^0}{\e}\geq \inf\left\{ 2\k_s \int_{\de\O}(u/\d^s)^2 X\cdot \nu\,dx\::\,  u\in \cH \right\}.
$$
\end{lemma}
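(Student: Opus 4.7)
The strategy is to work with pull-back minimizers $v_\e$ of the variational problem (\ref{eq:var-carac-lOe}), extract a subsequential limit $v_0\in\calH$, and exploit the uniform expansions of Section~\ref{sec:doma-pert-assoc} together with Lemma~\ref{lem:j1kprimes} to identify the limiting rate. Concretely, for each small $\e>0$, choose a positive minimizer $v_\e\in\cH^s_0(\Omega)$ of (\ref{eq:var-carac-lOe}), so that $\cV_{v_\e}(\e)=\l_{s,p}^\e$ and $\int_\Omega v_\e^p\,\textrm{Jac}_{\Phi_\e}\,dx=1$. Pick a sequence $\e_n\searrow 0$ which realizes the liminf on the left-hand side.

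Next, I would combine two one-sided estimates. On the one hand, expansion (\ref{cV-v-diff-property}) together with the uniform bound $\|v_\e\|_{H^s}\le C$ from (\ref{eq;bound-v-eps}) yields
\begin{equation*}
\l_{s,p}^{\e_n}=\cV_{v_{\e_n}}(\e_n)=\cV_{v_{\e_n}}(0)+\e_n\,\cV_{v_{\e_n}}'(0)+O(\e_n^2).
\end{equation*}
On the other hand, $\tilde v_{\e_n}:=v_{\e_n}/\|v_{\e_n}\|_{L^p(\Omega)}$ is admissible in the unperturbed problem and gives $\l_{s,p}^0\,\|v_{\e_n}\|_{L^p(\Omega)}^2\le \cV_{v_{\e_n}}(0)=[v_{\e_n}]_{H^s(\R^N)}^2$. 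Inserting the expansion (\ref{eq:Jacob}) of $\textrm{Jac}_{\Phi_\e}$ into the normalization constraint gives
\begin{equation*}
\|v_{\e_n}\|_{L^p(\Omega)}^2=1-\tfrac{2\e_n}{p}\int_\Omega v_{\e_n}^p\,\div X\,dx+O(\e_n^2),
\end{equation*}
and combining the three displays produces the key inequality
\begin{equation*}
\frac{\l_{s,p}^{\e_n}-\l_{s,p}^0}{\e_n}\ge \cV_{v_{\e_n}}'(0)-\frac{2\l_{s,p}^0}{p}\int_\Omega v_{\e_n}^p\,\div X\,dx+O(\e_n).
\end{equation*}

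The main technical issue is now to identify the limit of the right-hand side. Using the uniform bounds (\ref{eq;bound-v-eps}) and the compact embedding $\cH^s_0(\Omega)\embed L^p(\Omega)$, extract a subsequence (not relabelled) with $v_{\e_n}\weak v_0$ in $H^s(\R^N)$ and $v_{\e_n}\to v_0$ in $L^p(\Omega)$; in particular $v_0\ge 0$ and $\|v_0\|_{L^p(\Omega)}=1$ by the normalization. The continuity of $\e\mapsto\l_{s,p}^\e$ and the above expansion force $[v_{\e_n}]_{H^s(\R^N)}^2\to\l_{s,p}^0=[v_0]_{H^s(\R^N)}^2$, so the weak convergence is actually \emph{strong} in $H^s(\R^N)$ and $v_0$ is a nonnegative minimizer of $\l_{s,p}^0$. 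Strict positivity (hence $v_0\in\calH$) follows from the fractional strong maximum principle applied to equation~(\ref{eq:1.1}).

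Finally, strong $H^s$-convergence allows invoking Lemma~\ref{lem:v_ez_k-ok-corollary} to conclude $\cV_{v_{\e_n}}'(0)\to\cV_{v_0}'(0)$, while strong $L^p$-convergence gives $\int_\Omega v_{\e_n}^p\,\div X\,dx\to\int_\Omega v_0^p\,\div X\,dx$. Applying Lemma~\ref{lem:j1kprimes} to $v_0\in\calH$ therefore yields
\begin{equation*}
\liminf_{\e\searrow 0}\frac{\l_{s,p}^\e-\l_{s,p}^0}{\e}\ge \cV_{v_0}'(0)-\frac{2\l_{s,p}^0}{p}\int_\Omega v_0^p\,\div X\,dx=2\k_s\int_{\de\O}(v_0/\d^s)^2\,X\cdot\nu\,dx,
\end{equation*}
which is bounded below by the infimum over $\calH$, as desired. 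The delicate point is the upgrade from weak to strong $H^s$-convergence: without it, one cannot pass to the limit in the nonpositive definite quadratic form $\cV_v'(0)$, since Lemma~\ref{lem:v_ez_k-ok-prelim} relies on strong $H^s$-convergence.
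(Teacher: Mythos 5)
Your proposal is correct and follows essentially the same route as the paper: pick pull-back minimizers $v_{\e_n}$ along a sequence realizing the liminf, derive the key inequality by comparing $\cV_{v_{\e_n}}(\e_n)$ with the value of the normalized competitor in the unperturbed problem, upgrade the weak $H^s$-limit $v_0$ to a strong limit (so that $v_0\in\calH$ and $\cV'_{v_{\e_n}}(0)\to\cV'_{v_0}(0)$ by Lemma~\ref{lem:v_ez_k-ok-corollary}), and finish with Lemma~\ref{lem:j1kprimes}. The only presentational difference is that the paper establishes the strong convergence $v_{\e_n}\to u$ first and then the inequality, while you establish the inequality first; you also correctly flag the strong-convergence upgrade as the delicate step that makes the limit passage in $\cV'_v(0)$ legitimate.
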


\begin{proof}
Let $(\e_n)_n$ be a sequence  of positive numbers converging to zero and with the property that 
\be \label{eq:lim-to-lim-inf}
\lim_{n\to \infty}\frac{  \l_{s,p}^{\e_n}-  \l_{s,p}^0}{\e_n} = \liminf_{\e\searrow 0^+}\frac{\l_{s,p}^\e-\l_{s,p}^0}{\e}.
\ee
For  $n\in\N$, we let $v_{\e_n}$ be a positive minimizer corresponding to the variational characterization of $\l_{s,p}^{\e_n}$ given in (\ref{eq:var-carac-lOe}), i.e. we have 
\begin{equation}
  \label{eq:e-n-prime-eq}
\l_{s,p}^{\e_n}  = \cV_{v_{\e_n}}(\e_n) \qquad \text{and}\qquad 
\int_{\O} v_{\e_n}^p\textrm{Jac}_{\Phi_{\e_n}}dx = 1.
\end{equation}
Since $v_{\e_n}$ remains bounded in $\cH^s_0(\Omega)$ by \eqref{eq;bound-v-eps}, we may pass to a sub-sequence with the property that $v_{\e_n} \weak u$ in $\cH^s_0(\Omega)$ for some $u\in \cH^s_0(\Omega)$. Moreover, $v_{\e_n} \to u$ in $L^p(\Omega)$ as $n \to \infty$ since the embedding $\cH^s_0(\Omega) \to L^p(\Omega)$ is compact. 
In the following, to keep the notation simple, we write $\e$ in place of $\e_n$. 
By (\ref{cV-v-deriv-zero-bounded}), (\ref{cV-v-diff-property}) and (\ref{eq:e-n-prime-eq}), we have  
\begin{equation}\label{eq:ATl-prelim}
\cV_{v_{\e}}(0)= \cV_{v_\e}(\e)- \eps \cV_{v_\eps}'(0) + O(\eps^2)[v_\eps]_s^2 = \l_{s,p}^\e - \eps \cV_{v_\eps}'(0) + O(\eps^2)=\l_{s,p}^\e + O(\eps)
\end{equation}
and therefore 
\begin{equation}
\label{strong-convergence-v-eps-prelim}
\cV_{u}(0) =[u]_s^2 \le  \liminf_{\eps \to 0} [v_\eps]_s^2
=  \liminf_{\eps \to 0}\cV_{v_{\e}}(0) \le \limsup_{\eps \to 0}\l_{s,p}^\e \le \l_{s,p}^0,
\end{equation}
where the last inequality follows from Lemma~\ref{lem:upper-estim}. In view of \eqref{eq:Jacob} and the strong convergence $v_\eps \to u$ in $L^p(\Omega)$, we see that
\begin{equation}
  \label{eq:div-new-expansion-v-eps}
1 = \int_{\O} v_\e^p\textrm{Jac}_{\Phi_\e}dx= \int_{\O} v_\e^p(1+\e \textrm{div}X )dx+O(\e^2) = \int_{\Omega} u^p\,dx +o(1)
\end{equation}
as $\eps \to 0$, and hence $\|u\|_{L^p(\Omega)}=1$. Combining this with \eqref{strong-convergence-v-eps-prelim}, we see that $u \in \cH$ is a minimizer for $\l_{s,p}^0$, and that equality must hold in all inequalities of \eqref{strong-convergence-v-eps-prelim}. From this we deduce that 
\begin{equation}
\label{strong-convergence-v-eps}
\text{$v_\eps \to u$ strongly in $\cH^s_0(\Omega).$}
\end{equation}
Now (\ref{eq:ATl-prelim}) and the variational characterization of $\l_{s,p}^0$ imply that  
\begin{equation}\label{eq:ATl}
\l_{s,p}^0 \left(\int_{\O} v_{\e}^pdx \right)^{2/p}\leq
\cV_{v_{\e}}(0)= \l_{s,p}(\O_\e) - \eps \cV_{v_\eps}'(0) + O(\eps^2)
\end{equation}
whereas by (\ref{eq:div-new-expansion-v-eps}) we have 
$$
\int_{\O} v_\e^p\,dx = 1-\e  \int_{\O} v_\e^p \textrm{div}X dx+O(\e^2) =
1-\e  \int_{\O} u^p \textrm{div}X dx+o(\e) 
$$ 
and therefore 
\begin{equation}
  \label{eq:div-expansion-lower-bound}
\left(\int_{\O} v_{\e}^pdx \right)^{2/p}=1- \frac{2 \e}{p}\int_{\O} u^p \textrm{div}X dx +o(\e).
\end{equation}
Plugging this into (\ref{eq:ATl}), we get the inequality
$$
\l_{s,p}^\e \ge \Bigl(1- \frac{2 \e}{p}\int_{\O} u^p \textrm{div}X dx\Bigr)
\l_{s,p}^0 + \eps \cV_{v_\eps}'(0)+o(\eps).
$$
Since, moreover, $\cV_{v_\eps}'(0) \to \cV_u'(0)$ as $\eps \to 0$ by Lemma~\ref{lem:v_ez_k-ok-corollary} and (\ref{strong-convergence-v-eps}), it follows that 
$$
\l_{s,p}^\eps-\l_{s,p}^0 \ge \eps \Bigl(\cV_{u}'(0) -\frac{2 \l_{s,p}^0}{p}\int_{\O} u^p \textrm{div}X dx\Bigr)+o(\eps)
$$
and therefore 
$$
\l_{s,p}^\e-\l_{s,p}^0 \ge 2 \eps \k_s \int_{\de\O}(u/\d^s)^2 X\cdot \nu\,dx+o(\e)
$$
by Lemma~\ref{lem:j1kprimes}. We thus conclude that 
$$
\lim_{\e \to 0^+}\frac{\l_{s,p}^\e-\l_{s,p}^0}{\e}
\ge 2\k_s \int_{\de\O}(u/\d^s)^2 X\cdot \nu\,dx.
$$
Taking the infinimum over $u\in \cH$ in the RHS of this inequality and using \eqref{eq:lim-to-lim-inf}, we get the result.
\end{proof}

\begin{proof}[Proof of Proposition \ref{prop:shape-deriv} (completed)]
Proposition \ref{prop:shape-deriv} is a consequence of Lemma \ref{lem:upper-estim} and Lemma \ref{lem:lower-estim}. Indeed,  let 
$$
A_{s,p}(\O):= \inf \left\{2\k_s \int_{\de\O}(u/\d^s)^2 X\cdot \nu\,dx \::\,  u\in \cH    \right\}.
$$
Thanks to  \eqref{eq:Bnd-reg-v-eps}  the infinimum  $A_{s,p}(\O)$ is attained.
Finally by Lemma \ref{lem:upper-estim} and Lemma \ref{lem:lower-estim} we get
\begin{align*}
A_{s,p}(\O) \geq   \de_\e^+ \Big|_{\e=0}\l_{s,p}^\e \geq   \liminf_{\e\searrow 0}\frac{\l_{s,p}^\e-\l_{s,p}^0}{\e}\geq A_{s,p}(\O).
\end{align*}
\end{proof}

\section{Proof of the main results}\label{S:proofs}
 
In this section we complete the proofs of the main results stated in the introduction. 

\begin{proof}[Proof of Theorem \ref{th:Shape-deriv} (completed)]
In view of Proposition \ref{prop:shape-deriv}, the proof of  Theorem \ref{th:Shape-deriv} is complete once we show that 
\be\label{eq:k-s-Gamma}
 2\k_s=\G(1+s)^2,
\ee 
where $\Gamma$ is the usual Gamma function.
In view of \eqref{eq:-kappa-s-implicit},  the constant $\k_s$ does not depend on $N$, $p$ and $\O$,  we consider the case $N=p=1$ and the family of diffeomorphisms $\Phi_\e$ on $\R^N$ given by $\Phi_\e(x)=(1+\e)x$, $\e \in (-1,1)$, so that $X:= \partial_\e \big|_{\e = 0} \Phi_\e$ is simply given by $X(x)=x$. Letting $\O_0:=(-1,1)$, we define $\O_{\e}=\Phi_\e(\O_0)=(-1-\e,1+\e)$. Moreover, we consider $w_\e\in \cH^s_0(\Omega_\e) \cap C^s_0([-1-\e,1+\e])$ given by 
\be \label{eq:express-ells}
w_\e(x)=\ell_s((1+\e)^2-|x|^2)^s_+ \qquad\textrm{ with }\qquad \ell_s:= \frac{2^{-2s}\G(1/2)}{\G(s+1/2)\G(1+s)}.
\ee
It is well known that $w_\eps$ is the unique solution of the problem 
$$
\Ds w_\e=1 \quad \text{in $\O_\e$,}\qquad w_\e \equiv 0 \quad \text{on $\R^N \setminus \Omega_\e$,}
$$
see e.g. \cite{RX-Poh} or \cite{JW17}. Recalling (\ref{eq:1.1}), we thus deduce that $u_\e = \l_{s,1}(\O_\e)w_\e$ is the unique positive minimizer corresponding to \eqref{eq:def-lambda-sp} in the case $N=p=1$, which implies that $\|u_\e\|_{L^1(\R)}=1$ and therefore 
\begin{equation}
  \label{eq:-to-diff-eq}
\l_{s,1}(\O_\e)= \|w_\e\|_{L^1(\R)}^{-1}= (1+\e)^{-(2s+1)} \|w_0\|_{L^1(\R)}^{-1}.
\end{equation}
Moreover, by standard properties of the Gamma function,
\begin{align*}
\|w_0\|_{L^1(\R)} &=\ell_s \int_{-1}^1 (1-|x|^2)^s\, dx=2 \ell_s \int_0^1(1-r^2)^s\, dr= \ell_s \int_0^1 t^{-1/2} (1-t)^s\, dt\\
&= \ell_s \frac{\G(1/2)\G(s+1)}{\G(s+3/2)}= \ell_s \frac{\G(1/2)\G(s+1)}{(s+1/2)\G(s+1/2)}= \frac{2^{2s}\,\ell_s^2\,\G(s+1)^2}{s+1/2}.
\end{align*}
By differentiating (\ref{eq:-to-diff-eq}), we get
\be \label{eq:diff-simple-case}
\de_\e\Big|_{\e=0}\l_{s,1}(\O_\e)  =-\frac{2s+1}{\|w_0\|_{L^1(\R)}}.
\ee
On the other hand, by Proposition \ref{prop:shape-deriv} and the fact that $u_0$ is the unique positive minimizer for $\lambda_{s,1}$,  we deduce that 
$$
\de_\e^+\Big|_{\e=0}\l_{s,1}(\O_\e)= -2\k_s [(u_0/\delta^s)^2(1)+(u_0/\delta^s)^2(-1)]= -2^{2+2s} \k_s \,\ell_s^2 \,\lambda_{s,1}(\O_0)^2= -\frac{2^{2+2s} \k_s\, \ell_s^2}{\|w_0\|_{L^1(\R)}^2}.
$$
We thus conclude that 
$$
2 \k_s = \frac{(2s+1)\|w_0\|_{L^1(\R)}}{2^{1+2s} \ell_s^2} = \Gamma(s+1)^2.
$$
Thus, by Proposition \ref{prop:shape-deriv}, we get the result as stated in the theorem.
\end{proof}

\noindent

 
 \begin{proof}[Proof of Corollary \ref{cor:1.3}]
Let     $h\in C^{3}(\de\O)$, with $\int_{\de\O}h\, dx=0$. Then it is well known (see e.g. \cite[Lemma 2.2]{FW18}) that there exists a family of diffeomorphisms $\Phi_\eps: \R^N \to \R^N$, $\eps \in (-1,1)$ satisfying \eqref{eq:def-diffeom} and having the following properties:   
\be
\text{$|\Phi_\eps(\Omega)|= |\Omega|$ for $\e \in (-1,1)$, and $X:= \partial_\e \big|_{\e=0}\Phi_\e$ equals $h \nu$ on $\partial \Omega$.}
\label{eq:Vectorfil-Y}
\ee
By assumption, there exists $\eps_0 \in (0,1)$ with $\l_{s,p}(\Phi_\e(\O)) \geq \l_{s,p}(\O)$ for $\e \in (-\e_0,\e_0)$.  
Applying Theorem \ref{th:Shape-deriv}  and noting that $X \cdot \nu \equiv h$ on $\partial \Omega$ by \eqref{eq:Vectorfil-Y}, we get
$$
 \min \left\{\G(1+s)^2 \int_{\de\O}(u/\d^s)^2 h\,dx\::\, u\in \calH    \right\}\geq 0.
$$
By the same argument applied to $-h$, we get 
 \begin{equation}
\max \left\{\G(1+s)^2 \int_{\de\O}(u/\d^s)^2 h\,dx\::\, u\in \calH    \right\}\leq 0.
\end{equation} 
We thus conclude that
 $$
\int_{\de\O}(u/\d^s)^2 h\,dx = 0 \qquad \text{ for every $u\in \calH$ and  for all  $h\in C^3(\de\O)$, with $\int_{\de\O}h\, dx=0$.}
$$
By a standard argument, this implies that $u/\d^s$ is constant on $\de\O$. Now, since $u$ solves \eqref{eq:1.1} and $p\in \{1\}\cup [2,\infty)$, we deduce from \cite[Theorem 1.2]{JW17} that $\O$ is a ball.
\end{proof}
\begin{proof}[Proof of Theorem \ref{th:1.2}]
Consider the unit centered ball $B_1=B_1(0)$.  For $\tau\in (0,1)$ and $t\in (\tau-1,1-\tau)$, we define $B^t:=B_{\tau}(te_1)$, where $e_1$ is the first coordinate direction.
To prove Theorem \ref{th:1.2}, we can take advantage  of the   invariance   under rotations of the problem and may restrict our attention to domains of the form $\Omega(t) = B_1\setminus \overline{B^t} $.   We define 
\begin{equation}
  \label{eq:def-theta}
\theta:(\tau-1,1-\tau)\to \R, \qquad \theta(t):=\l_{s,p}(\O(t)).
\end{equation}
We claim that $\theta$ is differentiable and satisfies 
 \begin{equation}\label{4.4}
\theta'(t)<0  \qquad\textrm{ for $t\in (0,1-\tau)$.}
\end{equation} 
For this we fix $t \in (\tau-1,1-\tau)$ and a vector field $X: \R^N \to \R^N$ given by $X(x) = \rho(x)e_{1}$, where $\rho \in C^{\infty}_{c}(B_{1})$ satisfies $\rho \equiv 1$ in a neighborhood of $B^t$. For $\eps \in (-1,1)$, we then define 
$\Phi_\eps: \R^N \to \R^N$ by $\Phi_\eps(x)=x + \beta \eps X(x)$, where $\beta>0$ is chosen sufficiently small to guarantee that 
$\Phi_\eps$, $\eps \in (-1,1)$ is a family of diffeomorphisms satisfying \eqref{eq:def-diffeom} and satisfying $\Phi_\eps(B_1) = B_1$ for $\eps \in (-1,1)$. Then, by construction, we have  
\begin{equation}
  \label{eq:o-t-characterization}
\Phi_\eps(\O(t))= \Phi_{\eps}\left(B_1\setminus \overline{B^t}\right)  = 
B_1\setminus \overline{\Phi_{\eps}(B^t)}= 
B_1\setminus \overline{B^{t+\beta \eps}} = \O(t+\beta \eps).
\end{equation}
Next we recall that, since $p \in \{1,2\}$, there exists a unique positive minimizer $u \in \cH^s_0(\Omega(t))$ corresponding to the variational characterization \eqref{eq:def-lambda-sp} of $\lambda_{s,p}(\O(t))$. Hence, by Corollary~\ref{cor:1.2}, the map $\eps \mapsto \l_{s,p}(\Phi_\eps(\O(t)))$ is differentiable at $\eps = 0$. In view of (\ref{eq:o-t-characterization}), we thus find that the map $\theta$ in (\ref{eq:def-theta}) is differentiable at $t$, and 
\begin{equation}
  \label{eq:theta-prime-formula}
\theta'(t)= \frac{1}{\beta} \frac{d}{d\eps} \Big|_{\eps=0}\l_{s,p}(\Phi_\eps(\O(t))) = \Gamma(1+s)^2\int_{\partial\Omega(t)}\left(\frac{u}{\d^{s}}\right)^{2} X\cdot \nu\, dx = \Gamma(1+s)^2\int_{\partial B^t}\left(\frac{u}{\d^{s}}\right)^{2} \nu_1\, dx
\end{equation}
by (\ref{eq:cor:1.2-formula}). Here $\nu$ denotes the interior unit normal on $\partial \Omega(t)$ which coincides with the exterior unit normal to $B^t$ on $\partial B^t$, and we used that 
$$
X \equiv e_1\quad \text{on $\partial B^t$},\qquad X \equiv 0 \quad \text{on $\partial B_1 = \partial \Omega(t) \setminus \partial B^t$}
$$
to get the last equality in (\ref{eq:theta-prime-formula}). 
Next, for fixed $t\in (0,1-\tau)$, let $H$ be the half space defined by $H= \{x\in \R^N: x\cdot e_1> t\}$ and let $\Theta = H \cap \Omega(t)$. We also let  $r_H:\R^N\to \R^N$ be the reflection map with respect to he hyperplane $\partial H:=\{x\in \R^N: x\cdot e_1= t\}$.
For  $x\in \R^N$, we  denote $\bar x:= r_{H}(x)$,  $\overline u (x):= u(\overline x)$.  Using these notations,  we have 
\begin{align}
&\theta'(t)  = \Gamma(1+s)^2\int_{\partial B^{t}}\left(\frac{u}{\d^{s}}\right)^{2}\nu_{1}\,dx \nonumber\\
&=\Gamma(1+s)^2\int_{\partial B^{t}\cap\Theta}\left( \left(\frac{ u}{\d^{s}}\right)^{2}(x)-\left(\frac{\ov u}{\d^{s}} \right)^{2}( x)\right) \nu_{1}\, dx.  \label{eq:firslet-derap}
\end{align}
Let $w = \ov u -u \in H^s(\R^N)$.  Then $w$ is a (weak) solution of the problem 
\begin{equation}
  \Ds w= \l_{s,p}(\Omega({t})) \ov  u^{p-1}- \l_{s,p}(\Omega({t}))  u^{p-1}= c_p w \qquad\text{in}\quad \Theta,
  %
%
\end{equation}
where 
 $$
 \begin{cases}
 c_p:= \l_{s,p}(\Omega({t}))&\qquad \textrm { for $p=2$,}\\
 c_p=0 &\qquad \textrm { for $p=1$}.
 \end{cases} 
$$
Moreover, by definition, $w \equiv \ov u\geq 0$ in $H\setminus \ov\Theta$, and $w \equiv \ov u >0$ in the subset $[r_H(B_1) \cap 
H] \setminus \ov\Theta$ which has positive measure since $t>0$. Using that $w$ is anti-symmetric with respect to $H$ and the fact that $\l_{s,p}(\Theta)> c_p $ (which follows since $\Theta$ is a proper subdomain of $\O(t)$),  we can apply the weak maximum principle for antisymmetric functions  (see \cite[Proposition 3.1]{JW17} or \cite[Proposition 3.5]{JW}) to deduce that $w\ge 0$ in $\Theta$. Moreover, since $w \not \equiv 0$ in $\R^N$, it follows from the strong maximum principle for antisymmetric functions given in \cite[Proposition 3.6]{JW} that $w>0$ in $\Theta$. Now by  the fractional Hopf lemma for antisymmetric functions (see \cite[Proposition 3.3]{JW17}) we conclude that
$$
0< \frac{w}{\d^s}= \frac{\ov u}{\d^s}- {\frac{u}{\d^s}}\quad \text{and therefore}\quad {\frac{\ov u}{\d^s}} > \frac{u}{\d^s}  \geq 0     \qquad\textrm {on  $\de B^{t} \cap  \Theta$}.
$$
From this and \eqref{eq:firslet-derap} we get  \eqref{4.4}, since $\nu_1>0$ on $ \de B^{t}\cap  \Theta$. \\
To conclude, we observe that the function $t\mapsto\l_{s,p}(t)=\l_{s,p}(\O(t))$ is even,  thanks to the invariance of the problem under rotations. Therefore the function $\theta$ attains its maximum uniquely at $t=0$.
\end{proof}

\section{Proof of Proposition \ref{lem:lim-ok}}\label{s:proof-Lemma-conv}
The aim of this section is to prove Proposition \ref{lem:lim-ok}. For  the readers convenience,  we repeat the statement here.

\begin{proposition}\label{lem:lim-ok-new-section}
Let $X \in C^{0}(\overline \Omega,\R^N)$, let $u \in C^s_0(\ov \O)\cap C^1(\O)$, and assume that 
$\psi:=\frac{u}{\d^s}$ extends to a function on $\overline \Omega$ satisfying \eqref{eq:Bnd-reg-v-eps} and \eqref{eq:Grad-bnd-psi-eps}. Moreover, put $U_k:=u \z_k \in C^{1,1}_c(\Omega)$, where $\z_k$ is defined in (\ref{eq:def-z-k-etc}).  Then 
\begin{equation}
  \label{eq:lem:new-section-claim}
\lim_{k\to \infty}\int_{\Omega}\n U_k \cdot X \Bigl( u   \Ds \z_k  -I(u,\z_k)\Bigr)\,dx= - \k_s  \int_{\de\O} \psi^2 X\cdot \nu\, dx,
\end{equation}
where \be \label{eq:-kappa-s-implicit-new-section}
 \k_s:=  -\int_{\R} h'(r) \Ds h(r)\, dr \qquad \text{with}\quad h(r):=r^s_+ \z(r)
\ee
and $\z$ given in (\ref{eq:def-z}), and where we use the notation
\be\label{eq:def-I-u-v-new-section}
  I(u,v)(x):= \int_{\R^N}{(u(x)-u(y))(v(x)-v(y))}K_0(x,y)\, dy
\ee
for $u\in C^s_c(\R^N)$, $v\in C^{0,1}(\R^N)$ and $x \in \R^N$.
\end{proposition}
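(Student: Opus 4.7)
The plan is to analyze the integrand in a boundary layer of width $O(1/k)$ around $\de\O$, where the cutoff $\z_k\circ\delta$ transitions from $0$ to $1$, and to recognize that its leading asymptotics is governed by a one-dimensional profile from which the constant $\kappa_s$ emerges. I would first recast the integrand using the identity
$$u(x)\Ds g(x) - I(u,g)(x) = b_{N,s}\, \mathrm{PV}\!\int_{\R^N}\frac{u(y)(g(x)-g(y))}{|x-y|^{N+2s}}\,dy,$$
applied with $g = \z_k\circ\delta$. Since $1-g_k = \rho_k\circ\delta$ is supported in the $(2/k)$-tubular neighborhood of $\de\O$, a kernel estimate shows that the contribution from $x$ with $\delta(x)>\eta$ is of order $O(k^{-1-2s})$ for every fixed $\eta>0$, so only the region $\{0\le\delta\le\eta\}$ contributes in the limit.

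In the remaining region, I would work in tubular coordinates $x = x' + t\nu(x')$ with $t = \delta(x)$, via a finite partition of unity on $\de\O$. The continuity of $\psi = u/\delta^s$ up to the boundary yields the boundary-layer ansatz $u(x)\approx \psi(x')\,t^s$, hence $U_k(x)\approx k^{-s}\psi(x')\,h(kt)$, and the dominant (normal) component satisfies $\nabla U_k(x)\cdot X(x) \approx k^{1-s}\psi(x')\,h'(kt)\,X(x')\cdot\nu(x')$. Since a function depending only on one Cartesian variable has the same fractional Laplacian in $\R^N$ as in $\R$ (thanks to the normalization of $b_{N,s}$), the $N$-dimensional operator applied to the 1D ansatz reduces at leading order to the 1D fractional Laplacian of $h(kt)$, giving a factor $k^s$; using also that $\Ds(r_+^s)=0$ on $(0,\infty)$ eliminates the $(\z_k\circ\delta)\Ds u$ piece of the integrand (through the product-rule identity $u\Ds g - I(u,g) = \Ds U_k - g\Ds u$) at leading order. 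After rescaling $r = kt$, the Jacobian $dx = k^{-1}(1+O(1/k))\,dr\,dS(x')$ absorbs the factor $k$ from $h'$, and each chart contributes
$$\int_{\de\O}\psi(x')^2\,X(x')\cdot\nu(x')\,dS(x')\cdot\int_{\R}h'(r)\Ds h(r)\,dr \;=\; -\kappa_s\int_{\de\O}\psi^2\, X\cdot\nu\,dS,$$
by the definition \eqref{eq:-kappa-s-implicit-new-section} of $\kappa_s$, which gives the claimed formula \eqref{eq:lem:new-section-claim}.

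The main obstacle will be the rigorous justification of these replacements, where three sources of error must be controlled: (a) the curvature of $\de\O$, which perturbs the tubular-coordinate Jacobian and the ansatz $\delta\approx t$ by $O(t)$ terms; (b) the H\"older variation of $\psi$ and $X$, which only allows pointwise substitution modulo $C^\alpha$-errors controlled by \eqref{eq:Bnd-reg-v-eps} and \eqref{eq:Grad-bnd-psi-eps}; and, most delicately, (c) the genuinely non-local character of $\Ds[\z_k\circ\delta]$ and $I(u,\z_k\circ\delta)$, which couple the boundary layer to the bulk of $\Omega$ and prevent an exact 1D reduction. Handling (c) is the hard part: I would Taylor-expand $\delta$ and $\nabla\delta$ in tubular coordinates, exploit the scaling $h_k(t) = k^{-s}h(kt)$ together with the gradient bound \eqref{eq:Grad-bnd-psi-eps} on $\psi$, and prove uniform integrability of the rescaled kernels, so that tangential and curvature corrections are shown to be $o(1)$ as $k\to\infty$ while only the 1D normal-profile contribution survives with the announced constant $\kappa_s$.
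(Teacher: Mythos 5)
Your proposal is correct in strategy and follows essentially the same route as the paper: reduce to a boundary layer of width $O(1/k)$, pass to tubular coordinates, blow up at scale $1/k$, recognize the limiting one-dimensional profile $h(r)=r_+^s\z(r)$ and use the identity $b_{N,s}a_{N,s}=b_{1,s}$ to collapse the $N$-dimensional kernel to the $1$D fractional Laplacian, then read off $\kappa_s$ from $\int_{\R}h'\Ds h\,dr$. The paper carries this out by splitting the integrand as $g_k=g_k^0(g_k^1-g_k^2)$, proving separate dominating bounds and pointwise limits for $\nabla U_k\cdot X$, $u\Ds[\z_k\circ\delta]$ and $I(u,\z_k\circ\delta)$ (the last via its own lemma rather than your PV-integral recast), and applying dominated convergence together with the product rule $\Ds h=r_+^s\Ds\z-\tilde I$ at the end.
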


The remainder of this section is devoted to the proof of this proposition. For $k \in \N$, we define 
\begin{equation}
  \label{eq:def-ge-k}
g_k:= \n U_k \cdot X \Bigl( u   \Ds \z_k  -I(u,\z_k)\Bigr) \quad : \quad \Omega \to \R.
\end{equation}
For $\eps>0$, we put 
$$
\Omega^\eps= \{x \in \R^N\::\: |\delta(x)|< \eps\} \qquad \text{and}\qquad 
\Omega^\eps_+= \{x \in \R^N\::\: 0< \delta(x)< \eps\}= \{x \in \Omega\::\: \delta(x)< \eps\}.
$$
For every $\eps>0$, we then have 
\begin{equation}
  \label{eq:reduction-to-eps-collar-prelim-1}
\lim_{k\to \infty}\int_{\Omega \setminus \Omega^\eps} g_k\,dx =0.  
\end{equation}
To see this, we first note that $\z_k  \to 1$ pointwise on $\R^N \setminus \partial \Omega$, and therefore a.e. on $\R^N$. Moreover, choosing a compact neighborhood $K \subset \Omega$ of $\Omega \setminus \Omega^\eps$, we have 
$$
\Ds \z_k(x)= c_{N,s}\int_{\R^N \setminus K}\frac{1- \z_k(y)}{|x-y|^{N+2s}}dy \qquad \text{for $x \in \Omega \setminus \Omega^\eps$ and $k$ sufficiently large,}
$$
where $\frac{|1- \z_k(y)|}{|x-y|^{N+2s}}\le \frac{C}{1+|y|^{N+2s}}$ for $x \in \Omega \setminus \Omega^\eps,\: y\in \R^N \setminus K$ and $C>0$ independent of $x$ and $y$. Consequently, $\|\Ds \z_k\|_{L^\infty(\Omega \setminus \Omega^\eps)}$ remains bounded independently of $k$ and $\Ds \z_k \to 0$ pointwise on $\Omega \setminus \Omega^\eps$ by the dominated convergence theorem. Similarly, we see that $\|I(u,\z_k)\|_{L^\infty(\Omega \setminus \Omega^\eps)}$ remains bounded independently of $k$ and $I(u,\z_k) \to 0$ pointwise on $\Omega \setminus \Omega^\eps$. Consequently, we find that 
$$
\text{$\|g_k\|_{L^\infty(\Omega \setminus \Omega^\eps)}$ is bounded independently of $k$ and $g_k \to  0$ pointwise on $\Omega \setminus \Omega^\eps$.}
$$
Hence (\ref{eq:reduction-to-eps-collar-prelim-1}) follows again by the dominated convergence theorem. As a consequence, 
\begin{equation}
  \label{eq:reduction-to-eps-collar}
\lim_{k\to \infty}\int_{\Omega} g_k(x) \,dx = \lim_{k\to \infty}\int_{\Omega^\eps_+} g_k(x)\,dx
\qquad \text{for every $\eps>0$.}  
\end{equation}
Let, as before, $\nu: \partial \Omega \to \R^N$ denotes the unit interior normal vector field on $\Omega$. Since we assume that $\partial \Omega$ is of class $C^{1,1}$, the map $\nu$ is Lipschitz, which means that the derivative $d \nu: T \partial \Omega \to \R^N$ is a.e. well defined and bounded. Moreover, we may fix $\eps>0$ from now on with the property that the map 
\be\label{eq:Om-diffeo}
\Psi: \partial \O \times (-\e, \e) \to \Omega^{\e},\qquad (\sigma,r) \mapsto {\Psi}(\s,r)= \s + r \nu(\s)
\ee
is a bi-Lipschitz map with $\Psi(\partial \O \times (0, \e))= \Omega^{\e}_+$. In particular, $\Psi$ is a.e. differentiable, and
the variable $r$ is precisely the signed distance of the point ${\Psi}(\s,r)$ to the boundary $\partial \Omega$, i.e., 
\begin{equation}
  \label{eq:new-delta-equation}
\delta(\Psi(\s,r)) = r \qquad \text{for $\s \in \partial \O,\: 0 \le r < \eps.$}
\end{equation}
Moreover, for $0<\eps' \le \eps$, it follows from (\ref{eq:reduction-to-eps-collar}) that 
\begin{align}
\lim_{k\to \infty}\int_{\Omega} g_k\,dx&= \lim_{k\to \infty}\int_{\Omega^{\eps'}_+} g_k\,dx = \lim_{k\to \infty}\int_{\partial \O} \int_{0}^{\eps'}{\rm Jac}_{{\Psi}}(\s,r) g_k({\Psi}(\s,r))\,dr d\sigma \nonumber\\
&= \lim_{k\to \infty}\frac{1}{k} \int_{\partial \O} \int_{0}^{k \e'}j_k(\s,r) G_k(\s,r) \,dr d\sigma, \label{claim-first-reduction}
\end{align}
where we define  
\begin{equation}
  \label{eq:def-j-k-G-k}
j_k(\s,r) = {\rm Jac}_{{\Psi}}(\s,\frac{r}{k}) \quad \text{and}\quad G_k(\s,r)=g_k({\Psi}(\s,\frac{r}{k})) \qquad \text{for a.e. $\s \in \partial \O,\: 0 \le r < k \eps.$}
\end{equation}
We note that 
\begin{equation}
  \label{eq:j-k-est}
  \begin{aligned}
&\|j_k\|_{L^\infty(\partial \Omega \times [0,k\eps))} \le \|{\rm Jac}_{{\Psi}}\|_{L^\infty(\Omega_\eps)}< \infty \quad \text{for all $k$, and}\\
&\lim_{k \to \infty} j_k(\sigma,r)= {\rm Jac}_{{\Psi}}(\s,0)=1 \quad \text{for a.e. $\s \in \partial \Omega$, $r>0$.}
   \end{aligned} 
\end{equation}
By definition of the functions $g_k$ in (\ref{eq:def-ge-k}), we may write
\begin{equation}
\label{G-k-splitting}
G_k(\s,r) = G_{k}^0(\s,r)[G_{k}^1(\s,r) - G_{k}^2(\s,r)] \qquad \text{for $\s \in \partial \O,\: 0 \le r < k \eps$}
\end{equation}
with 
\begin{equation}
\begin{aligned}
G_k^0(\s,r) &= [\nabla U_k \cdot X]({\Psi}(\s,\frac{r}{k}))\\  
              G_k^1(\s,r) &= [u   \Ds \z_k]({\Psi}(\s,\frac{r}{k}))  && \qquad \text{and}\\
                            G_k^2(\s,r) &= I(u,\z_k)({\Psi}(\s,\frac{r}{k})).
\end{aligned} \label{h-0-1-2} 
\end{equation}
In order to analyze the limit in (\ref{claim-first-reduction}) for suitable $\e' \in (0,\e]$, we provide estimates for the functions $G_k^0, G_k^1, G_k^2$ separately in the following. We start with an estimate for $G_k^0$ given by the following lemma.
\begin{lemma}
\label{first-function-est}
Let $\alpha \in (0,1)$ be given by Lemma~\ref{reg-prop-minimizers}. Then we have 
\begin{equation}
  \label{eq:h-k-0-bound}
k^{s-1}|G_k^0(\s,r)| \le C(r^{s-1}+r^{s-1+\alpha}) \qquad \text{for $k \in \N$, $0 \le r < k\eps$}
\end{equation}
with a constant $C>0$, and 
\begin{equation}
  \label{eq:h-k-0-limit}
\lim_{k \to \infty} k^{s-1} G_k^0(\s,r) = h'(r) \psi(\s)
[X(\s) \cdot \nu(\s)] \quad \text{for $\s \in \partial \Omega$, $r>0$}
\end{equation}
with the function $r \mapsto h(r)= r^s_+ \zeta(r)$ given in \eqref{eq:-kappa-s-implicit-new-section}.   
\end{lemma}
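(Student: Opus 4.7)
The plan is to expand $\nabla U_k$ by the product rule and track the scaling in $k$ carefully. Writing $\phi_k(x):= \zeta_k(\delta(x)) = \zeta(k\delta(x))$, we have
$$
\nabla U_k = \phi_k \nabla u + k\, u\, \zeta'(k\delta)\, \nabla\delta,
$$
so $g_k^0 = \phi_k\, \nabla u \cdot X + k\, u\, \zeta'(k\delta)\, \nabla\delta \cdot X$. Substituting the decomposition $u = \psi \delta^s$ and using $\nabla u = \delta^s \nabla\psi + s\psi\, \delta^{s-1} \nabla \delta$, one gets
$$
g_k^0 = \zeta(k\delta)\bigl(\delta^s \nabla\psi\cdot X + s\psi\, \delta^{s-1}\nabla\delta\cdot X\bigr) + k\,\psi\,\delta^s\,\zeta'(k\delta)\, \nabla\delta \cdot X.
$$
At the point $x = \Psi(\sigma, r/k)$ one has $\delta(x) = r/k$, so after multiplying by $k^{s-1}$ the three contributions become, respectively,
$$
k^{-1} r^s \zeta(r)\,(\nabla\psi\cdot X)(x), \quad  s\,r^{s-1}\zeta(r)\,\psi(x)\,(\nabla\delta\cdot X)(x), \quad r^s \zeta'(r)\,\psi(x)\,(\nabla\delta \cdot X)(x).
$$

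For the upper bound, I would control the first contribution by the gradient estimate \eqref{eq:Grad-bnd-psi-eps}: $|\nabla\psi(x)| \le C\delta(x)^{\alpha-1} = C(r/k)^{\alpha-1}$, which yields $|k^{-1} r^s \zeta(r)(\nabla\psi\cdot X)(x)| \le C k^{-\alpha} r^{s+\alpha-1} \le C r^{s+\alpha-1}$ for $k \ge 1$. The second contribution is bounded by $C r^{s-1}$ using $\|\psi\|_\infty < \infty$ from \eqref{eq:Bnd-reg-v-eps} and boundedness of $X$ and $\nabla\delta$. For the third contribution I would exploit that $\zeta' = -\rho'$ is supported in $[-2,-1]\cup[1,2]$, so $\zeta'(r)\neq 0$ forces $r \in [1,2]$ when $r \ge 0$, on which range $r^s \le 2\, r^{s-1}$; hence this term is also dominated by $Cr^{s-1}$. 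Summing gives \eqref{eq:h-k-0-bound}.

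For the pointwise limit, the first contribution vanishes as $k\to\infty$ by the $k^{-\alpha}$ factor. Since $\psi \in C^\alpha(\overline\Omega)$, $X\in C^0(\overline\Omega)$, and $\delta \in C^{1,1}$ with $\nabla\delta = \nu$ on $\partial\Omega$, continuity yields $\psi(\Psi(\sigma,r/k)) \to \psi(\sigma)$ and $(\nabla\delta\cdot X)(\Psi(\sigma,r/k))\to \nu(\sigma)\cdot X(\sigma)$. The sum of the remaining two contributions therefore tends to
$$
\bigl[s\,r^{s-1}\zeta(r) + r^s\zeta'(r)\bigr]\psi(\sigma)\,(X(\sigma)\cdot\nu(\sigma)) = h'(r)\psi(\sigma)[X(\sigma)\cdot\nu(\sigma)],
$$
since $h(r) = r^s\zeta(r)$. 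This gives \eqref{eq:h-k-0-limit}.

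There is no deep obstacle in this lemma: it is essentially a scaling computation, and the only subtlety is that $\nabla\psi$ may blow up as $\delta\to 0$, but the $k^{-1}$ prefactor that accompanies this term tames the singularity. Most of the care needed is to keep track of the different $k$-scalings in the two terms coming from the product rule and to use that $\zeta'$ has compact support bounded away from the origin when estimating the third contribution.
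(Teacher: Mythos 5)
Your proof is correct and follows essentially the same route as the paper's: product rule for $\nabla U_k$, substitution $u=\psi\delta^s$, the gradient bound $|\nabla\psi|\le C\delta^{\alpha-1}$ from \eqref{eq:Grad-bnd-psi-eps} to control the $\delta^s\nabla\psi$ contribution, and continuity of $\psi$, $\nabla\delta$, $X$ at the boundary to pass to the limit. The only (inessential) difference is presentational: the paper collapses the $\delta^s\nabla\psi$ term immediately into a single $O(\delta^{s-1+\alpha})$ error and carries it along, while you keep the three contributions explicit and estimate each one; your use of the compact support of $\zeta'$ to bound the third term by $Cr^{s-1}$ is a fine way to organize that step.
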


\begin{proof}
Since $u= \psi \delta^s$, we have 
$$
\nabla u = s \delta^{s-1}  \psi \nabla \delta + \delta^s \nabla \psi = s \delta^{s-1}  \psi \nabla \delta 
+ O(\delta^{s-1+\alpha})\qquad \text{in $\Omega$}
$$
by Lemma~\ref{reg-prop-minimizers}, and therefore, since $\z_k = \z \circ (k \delta)$ by (\ref{eq:def-z-k-etc}), 
$$
\nabla U_k = \nabla \Bigl(u \z_k \Bigr)= \Bigl(s \z \circ (k \delta)  + k \delta \z' \circ (k \delta)\Bigr) \psi \delta^{s-1}\nabla \delta  + O(\delta^{s-1+\alpha}) \qquad \text{in $\Omega$.}
$$    
Consequently, by (\ref{eq:new-delta-equation}) we have
$$
\bigl[\bigl(\nabla U_k\bigr) \circ {\Psi}\bigr](\s,\frac{r}{k}) =  
\Bigl(s \z(r)  +  r \z'(r)\Bigr) \psi(\s + \frac{r}{k}\nu(\s)) \bigl(\frac{r}{k}\bigr)^{s-1} 
\nabla \delta(\s + \frac{r}{k}\nu(\s))  + O\Bigl(\bigl(\frac{r}{k}\bigr)^{s-1+\alpha}\Bigr) 
$$
for $\s \in \partial \Omega$, $0 \le r < \eps$ with $O(r^{s-1+\alpha})$ independent of $k$, and therefore
\begin{align*}
G_k^0(\s,r)&= \Bigl(s \z(r)  + r \z'(r)\Bigr) \psi(\s + \frac{r}{k}\nu(\s))
\nabla \delta(\s + \frac{r}{k}\nu(\s)) \cdot X(\s + \frac{r}{k}\nu(\s))k^{1-s} r^{s-1}\\
&+ k^{1-s-\alpha} O(r^{s-1+\alpha}) \qquad \text{for $\s \in \partial \Omega$, $0 \le r < k \eps$.}
\end{align*}
Since $\alpha>0$, we deduce that 
\begin{align*}
k^{s-1}G_k^0(\s,r)  &\to \Bigl(s \z(r)  + r \z'(r)\Bigr) \psi(\s)
\nabla \delta(\s) \cdot X(\s)r^{s-1}=h'(r) \psi(\s)
X(\s) \cdot \nu(\s) \qquad \text{as $k \to \infty$}
\end{align*}
for $\s \in \partial \Omega$, $r>0$, while 
$$
k^{s-1}|G_k^0(\s,r)| \le C(r^{s-1}+r^{s-1+\alpha}) \qquad \text{for $k \in \N$, $0 \le r < k\eps$}
$$
with a constant $C>0$ independent of $k$ and $r$, as claimed.
\end{proof}

Next we consider the functions $G_k^1$ defined in (\ref{h-0-1-2}), and we first state the following estimate.

 \begin{proposition}
\label{delta-s-z-k-conv}
There exists $\eps'>0$ with the property that 
\begin{equation}
  \label{eq:delta-s-z-k-concl-bound}
|k^{-2s} \Ds \z_k({\Psi}(\sigma, \frac{r}{k}))|\le \frac{C}{1+r^{1+2s}}\qquad \text{for $k \in \N$, $0 \le r < k \eps'$}
\end{equation}
with a constant $C>0$. Moreover, 
\begin{equation}
  \label{eq:delta-s-z-k-concl-limit}
\lim_{k \to \infty}k^{-2s} \Ds \z_k({\Psi}(\sigma, \frac{r}{k})) =  (-\Delta)^s \z (r) \quad \text{for $\s \in \partial \Omega$, $r>0$.}
\end{equation}
\end{proposition}

Before giving the somewhat lengthy proof of this proposition, we infer the following corollary related to the functions $G_k^1$. 

\begin{corollary}
\label{corol-second-function-est}
There exists $\eps'>0$ with the property that    
\begin{equation}
  \label{eq:h-2-bound}
|k^{-s}G_k^1(\sigma,r)|\le \frac{C r^s}{1+r^{1+2s}}\qquad \text{for $k \in \N$, $0 \le r < k \eps'$}
\end{equation}
with a constant $C>0$. Moreover, 
\begin{equation}
  \label{eq:h-2-limit}
\lim_{k \to \infty}k^{-s}G_k^1(\sigma,r) = \psi(\sigma) r^s (-\Delta)^s \z (r) \quad \text{for $\s \in \partial \Omega$, $r>0$.}
\end{equation}
\end{corollary}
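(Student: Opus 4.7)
The plan is to derive Corollary~\ref{corol-second-function-est} as a direct consequence of Lemma~\ref{delta-s-z-k-conv} together with the factorization $u=\psi\,\delta^s$ guaranteed by Lemma~\ref{reg-prop-minimizers}. The key algebraic observation is that along the normal coordinates $\Psi(\s,r/k)$, the signed distance satisfies $\delta(\Psi(\s,r/k)) = r/k$, so the factor of $u$ in $g_k^1$ produces precisely the $k^{-s}r^s$ that one needs to rebalance the scaling in Lemma~\ref{delta-s-z-k-conv}.

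Concretely, I would first write
\[
g_k^1(\Psi(\s,r/k)) \;=\; u(\Psi(\s,r/k))\cdot (-\Delta)^s[\z_k\circ\delta](\Psi(\s,r/k)),
\]
and then use $u=\psi\,\delta^s$ to get
\[
u(\Psi(\s,r/k)) \;=\; \psi(\Psi(\s,r/k))\,(r/k)^s \;=\; k^{-s}\,r^s\,\psi(\Psi(\s,r/k)).
\]
Multiplying by $k^{-s}$ then yields the crucial identity
\[
k^{-s}\,g_k^1(\Psi(\s,r/k)) \;=\; r^s\,\psi(\Psi(\s,r/k))\cdot\Bigl[k^{-2s}(-\Delta)^s[\z_k\circ\delta](\Psi(\s,r/k))\Bigr],
\]
which reduces everything to the statements of Lemma~\ref{delta-s-z-k-conv}.

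For the bound \eqref{eq:h-2-bound}, I would invoke \eqref{eq:delta-s-z-k-concl-bound} on the bracketed factor and the uniform bound $\|\psi\|_{L^\infty(\Omega)}\le C$ from \eqref{eq:Bnd-reg-v-eps}, obtaining
\[
|k^{-s}g_k^1(\Psi(\s,r/k))| \;\le\; \|\psi\|_{L^\infty}\,r^s\cdot\frac{C}{1+r^{1+2s}} \;\le\; \frac{C\,r^s}{1+r^{1+2s}}
\]
for $0\le r< k\eps'$, with $\eps'>0$ as in Lemma~\ref{delta-s-z-k-conv}. For the pointwise limit \eqref{eq:h-2-limit}, I would apply \eqref{eq:delta-s-z-k-concl-limit} to the bracketed factor and use that $\psi\in C^\a(\overline\Omega)$, so $\psi(\Psi(\s,r/k))\to \psi(\s)$ as $k\to\infty$ for each fixed $(\s,r)$. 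Passing to the limit in the displayed identity then gives
\[
\lim_{k\to\infty}k^{-s}g_k^1(\Psi(\s,r/k)) \;=\; r^s\,\psi(\s)\,(-\Delta)^s\xi(r),
\]
as required. There is no real obstacle here: the entire content of the corollary is packaged in Lemma~\ref{delta-s-z-k-conv} (whose proof carries the analytical weight), and the corollary's role is simply to reinsert the factor of $u$ in a way that preserves the correct scaling exponent $k^{-s}$ matching the left-hand side of the target decomposition $g_k = g_k^0(g_k^1-g_k^2)$ after the previous lemma provided $k^{s-1}$ for $g_k^0$.
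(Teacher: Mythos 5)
Your proposal is correct and follows exactly the same route as the paper's own proof: factor $u=\psi\,\delta^s$ to pull out $k^{-s}r^s\psi(\Psi(\s,r/k))$, then apply the bound \eqref{eq:delta-s-z-k-concl-bound} and the limit \eqref{eq:delta-s-z-k-concl-limit} from Lemma~\ref{delta-s-z-k-conv} together with $\|\psi\|_{L^\infty}<\infty$ and the continuity of $\psi$. The paper states this more tersely, but the content is identical.
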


\begin{proof}
Since $u = \psi \delta^s$ we have $u({\Psi}(\sigma,\frac{r}{k}))= k^{-s} \psi(\sigma + \frac{r}{k} \nu(\sigma)) r^s$ for $k \in \N$, $0 \le r < k\eps$, and 
$$
\lim_{k \to \infty}k^{s}u({\Psi}(\sigma,\frac{r}{k})) = \psi(\sigma) r^s \quad \text{for $\s \in \partial \Omega$, $r>0$.}
$$
Since moreover $\|\psi\|_{L^\infty(\Omega_\eps)}<\infty$, the claim now follows from Proposition~\ref{delta-s-z-k-conv} by recalling the definition in $G_k^1$ in (\ref{h-0-1-2}).
\end{proof}

We now turn to the proof of Proposition~\ref{delta-s-z-k-conv}, and we need some preliminary considerations. Since $\partial \Omega$ is of class $C^{1,1}$ by assumption, there exists an open ball $B \subset \R^{N-1}$ centered at the origin and, for every $\s\in \de\O$, a parametrization $f_\s: B \to \de\O$
of class $C^{1,1}$ with the property that $f_\s(0)=\s$ and $d f_\s(0): \R^{N-1} \to \R^N$ is a linear isometry. For $z \in B$ we then have 
$$
f_\s(z)-f_\s(0)= df_\s(0)z + O(|z|^2)
$$
and therefore 
\begin{align}
  \label{eq:f-s-first-exp}
&|f_\s(0)-f_\s(z)|^2 = |df_\s(0)z|^2 + O(|z|^3) = |z|^2 + O(|z|^3),\\
  \label{eq:f-s-second-exp}
&(f_\s(0)-f_\s(z))\cdot \nu(\s) = - df_\s(0)z \cdot \nu(\s)+ O(|z|^2)= O(|z|^2),
\end{align}
where we used in (\ref{eq:f-s-second-exp}) that $d f_\s(0)z$ belongs to the tangent space $T_\s \partial \Omega = \{\nu(\s)\}^\perp$. Here and in the following, the term $\calO(\tau)$ stands for a function depending on $\tau$ and possibly other quantities but satisfying $|\calO(\tau)| \le C \tau$ with a constant $C>0$.

Recalling the definition of the map $\Psi$ in \eqref{eq:Om-diffeo} and writing $\nu_\sigma(z):= \nu(f_\s(z))$ for $z \in B$, we now define
\be
\Psi_\s: (-\e,\e) \times B \to \Omega^\eps,\qquad \Psi_\s(r,z) = \Psi(f_\s(z),r) = 
f_\s(z) + r \nu_\sigma(z).
\ee
Then $\Psi_\s$ is a bi-Lipschitz map which maps $(-\e,\e) \times B$ onto a neighborhood of $\sigma$. Consequently, there exists $\eps' \in (0,\frac{\eps}{2})$ with the property that 
\begin{equation}
  \label{eq:distance-sigma-ineq}
|\sigma-y| \ge 3\eps' \qquad \text{for all $y \in \R^N \setminus \Psi_\s((-\e,\e) \times B)$.}
\end{equation}
Moreover, $\eps'$ can be chosen independently of $\sigma \in \partial \Omega$. 

Coming back to the proof of Proposition~\ref{delta-s-z-k-conv}, we now write, for $\sigma \in \partial \Omega$ and $r \in [0,k\eps')$,  
\begin{equation}
  \label{eq:delta-s-splitting}
\Ds \z_k(\Psi(\s,\frac{r}{k})) = c_{N,s} \Bigl( A_k(\sigma,r)+ B_k(\sigma,r) \Bigr)
\end{equation}
with 
$$
A_{k}(\sigma,r) := \int_{ \Psi_\s((-\e,\e) \times B)} \frac{\z(r)-\z_k(y)}{|\Psi(\s,\frac{r}{k}) - y|^{N+2s}}\, dy
$$
and 
$$
B_k(\sigma,r) := \int_{\R^N \setminus \Psi_\s((-\e,\e) \times B)} \frac{\z(r)-\z_k(y)}{|\Psi(\s,\frac{r}{k})- y|^{N+2s}}\, dy.
$$
Here we used that $\z_k(\Psi(\s,\frac{r}{k}))= \z(r)$ for $\sigma \in \partial \Omega$, $r \in [0,k \eps')$ by (\ref{eq:new-delta-equation}) and the definition of $\z_k$.  We first provide a rather straightforward estimate for the functions $B_k$. 
\begin{lemma}
  \label{B-k-new-lemma}
We have   
\begin{equation}
  \label{eq:B-estimate-majorization}
k^{-2s}|B_k(\sigma,r)| \le \frac{C}{1+r^{1+2s}}  
\qquad \quad \text{for $k \in \N$, $0 \le r < k \eps'$, $\sigma \in \partial \Omega$}
\end{equation}
with a constant $C>0$ and 
\begin{equation}
  \label{eq:B-estimate-limit}
\lim_{k \to \infty}k^{-2s}|B_k(\sigma,r)|=0 \qquad \text{for every $\sigma \in \Omega$, $r\ge 0$.}
\end{equation}
\end{lemma}

\begin{proof}
By (\ref{eq:distance-sigma-ineq}) and since $r < k\eps'$, we have 
$$
|\Psi(\s,\frac{r}{k})- y|= |\sigma - y + \frac{r}{k}\nu(\sigma)|\ge 
|\sigma - y|- \frac{r}{k} \ge \frac{|\sigma - y|}{3} +\eps' \quad \text{for $y \in \R^N \setminus \Psi_\s((-\e,\e) \times B)$.}
$$
Recalling that $\z = 1 -\rho$, $\:\z_k = 1-\rho_k$ and that $\rho_k$ is supported in $\Omega^{\text{\tiny{$\frac{2}{k}$}}}$, we thus estimate
\begin{align*}
|B_k(\sigma,r)| &\le \int_{\R^N \setminus \Psi_\s((-\e,\e) \times B)} \frac{|\rho(r)-\rho_k(y)|}{|\Psi(\s,\frac{r}{k})- y|^{N+2s}}\, dy \\
&\le 3^{N+2s} |\rho(r)| \int_{\R^N} \Bigl(|\sigma - y| +3 \eps'\Bigr)^{-N-2s}dy 
+ \bigl(\eps'\bigr)^{-N-2s} \int_{\R^N} |\rho_k(y)| \, dy\\
&\le C \Bigl(|\rho(r)| +|\Omega^{\text{\tiny{$\frac{2}{k}$}}}|\Bigr) \le C \Bigl(|\rho(r)| + k^{-1}\Bigr).
\end{align*}
Here and in the following, the letter $C$ stands for various positive constants. This estimate readily yields (\ref{eq:B-estimate-limit}). Moreover, 
$$
k^{-2s}|B_k(\sigma,r)| \le C k^{-2s}\Bigl(|\rho(r)| + k^{-1}\Bigr) \le \frac{C}{1+r^{1+2s}} + k^{-1-2s} \le \frac{C}{1+r^{1+2s}}  
$$
for $k \in \N$, $0 \le r < k \eps'$, $\sigma \in \partial \Omega$, as claimed in (\ref{eq:B-estimate-majorization}).
\end{proof}

To complete the proof of Proposition~\ref{delta-s-z-k-conv}, it thus remains to consider the functions $A_{k}$ in the following. For this, we need the following additional estimates for the maps $\Psi_\s$, $\sigma \in \partial \Omega$. We note here that $\Psi_\s$ is a.e. differentiable since it is Lipschitz, so the Jacobian determinant ${\rm Jac}_{{\Psi_\s}}$ is a.e. well-defined on $(-\e,\e) \times B$.

\begin{lemma}
  \label{new-lemma-Psi-sigma}
  There exists a constant $C_0$ with the property that for every $\sigma \in \partial \Omega$ we have the following estimates:\\[0.3cm]
  (i) $\quad |{\rm Jac}_{{\Psi_\s}}(r,z)| \le C_0$ for a.e. $r \in (-\eps,\eps)$, $z \in B$;\\[0.3cm]   
  (ii) $\quad |{\rm Jac}_{{\Psi_\s}}(r,z)-1| \le C_0(|r|+|z|)$ for a.e. $r \in (-\eps,\eps)$, $z \in B$;\\[0.3cm] 
  (iii) $\quad |{\rm Jac}_{{\Psi_\s}}(r+t,z)-{\rm Jac}_{{\Psi_\s}}(r-t,z)| \le C_0|t|$ for a.e. $r \in (-\eps,\eps)$, $z \in B$, $t \in (-\eps-r,\eps-r)$;\\[0.3cm]
  Moreover, for $\sigma \in \partial \Omega$, $r \in (-\eps,\eps)$, $z \in B$, $t \in (-\eps-r,\eps-r)$ we have\\[0.3cm]
  (iv) $\quad \frac{1}{C_0} \bigl(t^2 + |z|^2\bigr)^{\frac{1}{2}} \le  |{\Psi_\s}(r,0)- {\Psi_\s}(r+t,z)|  \le C_0 \bigl(t^2 + |z|^2\bigr)^{\frac{1}{2}}$,\\[0.3cm]
and for $\sigma \in \partial \Omega$, $r \in (-\eps,\eps)$, $t \in (-\eps-r,\eps-r) \setminus \{0\}$ and $z \in \frac{1}{|t|}B$  we have\\[0.3cm]
  (v) $\quad \Bigl|\frac{|{\Psi_\s}(r,0)- {\Psi_\s}(r+t ,|t| z)|^{2}}{t^2}-(1+|z|^2)\Bigr| \le C_0 (|t|+ |r| + |tz|)|z|^2$;\\[0.3cm]
  (vi) $\quad \Bigl|\bigl|{\Psi_\s}(r,0)- {\Psi_\s}(r+t ,|t| z)\bigr|^{-N-2s}- \bigl|{\Psi_\s}(r,0)- {\Psi_\s}(r-t ,|t| z)\bigr|^{-N-2s}\Bigr|$\vspace{0.3cm}
  
  \hspace{10cm} $\le C_0 |t|^{1-N-2s}(1+|z|^2)^{-\frac{N+2s}{2}}$.
\end{lemma}

\begin{proof}
 The inequalities (i) and (iv) are direct consequences of the fact that $\Psi_\s$ is bi-Lipschitz. In particular, if $C_0$ is a Lipschitz constant for $\Psi_\s^{-1}$, we have 
  $$
  \bigl(t^2 + |z|^2\bigr)^{\frac{1}{2}} = |(-t,z)|= |(r,0) -(r+t,z)| \le C_0 |{\Psi_\s}(r,0)- {\Psi_\s}(r+t,z)|
$$
  for $\sigma \in \partial \Omega$, $r \in (-\eps,\eps)$, $z \in B$ and $t \in (-\eps-r,\eps-r)$, so the first inequality in (iv) follows. By making $C_0$ larger if necessary so that it is also a Lipschitz constant for $\Psi_\s$, we then deduce the second inequality in (iv).
  
  To see (ii) and (iii), we note that $d \Psi_\s$ is a.e. given by
$$
d \Psi_\s (r,z)(r',z') = [d f_\sigma(z)+ r  d \nu_\sigma(z)]z' + r' \nu_\sigma(z) 
$$
for $(r,z) \in (-\eps,\eps) \times B$, $(r',z') \in \R \times \R^{N-1}$, which implies that 
$$
[d \Psi_\s (r,z)-d \Psi_\s (0,0)] (r',z') = [d f_\sigma(z)-df_\sigma(0)]z'+ r d \nu_\sigma(z) +r' \bigl(\nu_\sigma(z)-\nu_\sigma(0)\bigr)
$$
and
$$
[d \Psi_\s (r+t,z)-d \Psi_\s (r-t,z)] (r',z') = 2t d \nu_\sigma(z) z'.
$$
Since $d f_\sigma$, $\nu_\sigma$ are Lipschitz functions on $B$, $d\nu_\s$ is a bounded function on $B$ and the determinant is a locally Lipschitz continuous function on the space of linear endomorphisms of $\R^N$, it follows that
$$
|{\rm Jac}_{{\Psi_\s}}(r,z)-{\rm Jac}_{{\Psi_\s}}(0,0)| \le C_0(|r|+|z|) \quad \text{and}\quad 
|{\rm Jac}_{{\Psi_\s}}(r+t,z)-{\rm Jac}_{{\Psi_\s}}(r-t,z)| \le C_0|t|
$$
for a.e. $r \in (-\eps,\eps)$, $z \in B$, $t \in (-\eps-r,\eps-r)$. Moreover, ${\rm Jac}_{{\Psi_\s}}(0,0) = 1$ since the map
$$
\R \times \R^{N-1} \to \R^N, \qquad (r',z') \mapsto d{\Psi_\s}(0,0)(r',z') = d f_\sigma(0)z' + r' \nu_\sigma(0)
$$
is an isometry. Hence (ii) and (iii) follow.

To see (v) and (vi), we note that by definition of $\Psi_\s$ we have 
$$
{\Psi_\s}(r,0)- {\Psi_\s}(r+t,z)=f_\s(0)-f_\s(z)-t \nu_\s(0)+(r+t)(\nu_\s(0)-\nu_\s(z)) 
$$
for $z \in B$, $r \in (0,\e')$ and $t \in (-\eps-r,\e-r)$. Using moreover that $(\nu_\s(0)-\nu_\s(z))\cdot \nu_\s(0)=\frac{1}{2}|\nu_\s(0)-\nu_\s(z)|^2$, we get 
\begin{align}
&|{\Psi_\s}(r,0)- {\Psi_\s}(r+t,z)|^2 
=t^2+ |f_\s(0)-f_\s(z)|^2 +(r+t)^2|\nu_\s(0)-\nu_\s(z)|^2 \nonumber\\
&- 2t(f_\s(0)-f_\s(z))\cdot \nu_\s(0)  -t(r+t)|\nu_\s(0)-\nu_\s(z)|^2 +2(r+t)(f_\s(0)-f_\s(z))\cdot (\nu_\s(0)-\nu_\s(z)) \nonumber\\
&=t^2+ |f_\s(0)-f_\s(z)|^2 +r(r+t)|\nu_\s(0)-\nu_\s(z)|^2 \nonumber\\
&\qquad \qquad \qquad - 2t(f_\s(0)-f_\s(z))\cdot \nu_\s(0)  +2(r+t)(f_\s(0)-f_\s(z))\cdot (\nu_\s(0)-\nu_\s(z)) \nonumber \\
&=t^2+ |z|^2 + \bigl[|z| m_\s(z) +r(r+t) n_\sigma(z)-2t p_\s(z) +2(r+t)q_\sigma(z) \bigr]|z|^2
 \label{prelim-expansion-psi-s-diff}
\end{align}
for $z \in B$, $r \in (-\e,\e)$ and $t \in (-\eps-r,\e-r)$ with the functions 
$$
m_\s(z) = \frac{|f_\s(0)-f_\s(z)|^2-|z|^2}{|z|^3}, \quad n_\s(z)= \frac{|\nu_\s(0)-\nu_\s(z)|^2}{|z|^2},\quad
p_\s(z)= \frac{(f_\s(0)-f_\s(z))\cdot \nu_\s(0)}{|z|^2}
$$
and
$$
q_\s(z) = \frac{(f_\s(0)-f_\s(z))\cdot (\nu_\s(0)-\nu_\s(z))}{|z|^2}, \qquad z \in B \setminus \{0\},
$$
which are all bounded as a consequence of the Lipschitz continuity of $f_\s$ and $\nu_\s$ and of (\ref{eq:f-s-first-exp}) and (\ref{eq:f-s-second-exp}). We deduce that
\begin{align*}
  &\Bigl|\frac{|{\Psi_\s}(r,0)- {\Psi_\s}(r+t ,|t| z)|^{2}}{t^2}-(1+|z|^2)\Bigr|\\
  &= \Bigl||tz| m_\s(|t|z) +r(r+t) n_\sigma(|t|z)-2t p_\s(|t|z) +2(r+t)q_\sigma(|t|z) \Bigr| |z|^2 \le C_0(|tz|+ |r| + |t|)|z|^2   
\end{align*}
for $\sigma \in \partial \Omega$, $r \in (-\eps,\eps)$, $t \in (-\eps-r,\eps-r) \setminus \{0\}$ and $z \in \frac{1}{|t|}B$ if $C_0$ is chosen sufficiently large, as claimed in (v).

For the proof of (vi), we now set $w_\sigma(r,t,z):= \frac{1}{t^2}|{\Psi_\s}(r,0)- {\Psi_\s}(r+t,|t|z)|^2$, and we note that
$$
w_\sigma(r,t,z) \ge \frac{1+|z|^2}{C_0^2}\quad \text{for $\sigma \in \partial \Omega$, $r \in (-\eps,\eps)$, $t \in (-\eps-r,\eps-r) \setminus \{0\}$, $z \in \frac{1}{|t|}B$}
$$
by (iv). Moreover, from (\ref{prelim-expansion-psi-s-diff}) we infer that
\begin{equation*}
  \Bigl|w_\sigma(r,t,z) -w_\sigma(r,-t,z) \Bigr|= \Bigl| 2 r t  n_\sigma(|t|z) +4t\bigl( q_\sigma(|t|z)-p_\s(|t|z)\bigr) \Bigr| |z|^2 \le C_0 |t||z|^2 
\end{equation*}
for $\sigma \in \partial \Omega$, $r \in (-\eps,\eps)$, $t \in (-\eps-r,\eps-r) \setminus \{0\}$ and $z \in \frac{1}{|t|}B$ if $C_0$ is made larger if necessary.  Using these estimates together with the mean value theorem, we get that, for some $\tau= \tau(\sigma,r,t,z)$ with $-t < \tau< t$, 
\begin{align*}
  \Bigl|&|{\Psi_\s}(r,0)- {\Psi_\s}(r+t ,|t|z)|^{-N-2s}-|{\Psi_\s}(r,0)- {\Psi_\s}(r-t ,|t|z)|^{-N-2s}\Bigr|\\
        &= |t|^{-N-2s}\Bigl|w_\sigma(r,t,z)^{-\frac{N+2s}{2}} -w_\sigma(r,-t,z)^{-\frac{N+2s}{2}} \Bigr|\\
&=  \frac{(N+2s)|t|^{-N-2s} }{2} w_\sigma(r,\tau,z)^{-\frac{N+2s+2}{2}} \Bigl|w_\sigma(r,t,z) -w_\sigma(r,-t,z) \Bigr|\\
        &\le C_0|t|^{1-N-2s} (1+|z|^2)^{-\frac{N+2s+2}{2}}|z|^2 \le C_0|t|^{1-N-2s} (1+|z|^2)^{-\frac{N+2s}{2}} \label{diff-first-est} 
\end{align*} 
for $z\in B$, $r\in (0,\e')$ and $t \in (-\e+r,\e-r)$ after making $C_0$ larger if necessary, as claimed in (vi).
\end{proof}

We now have all the tools to study the quantity $A_{k}(\sigma,r)$ in \eqref{eq:delta-s-splitting}.

\begin{lemma}
  \label{A-k-new-lemma}
We have   
\begin{equation}
  \label{eq:A-estimate-majorization}
k^{-2s}|A_k(\sigma,r)| \le \frac{C}{1+r^{1+2s}}  
\qquad \quad \text{for $k \in \N$, $0 \le r < k \eps'$, $\sigma \in \partial \Omega$}
\end{equation}
with a constant $C>0$ and 
\begin{equation}
  \label{eq:A-estimate-limit}
\lim_{k \to \infty}k^{-2s}A_k(\sigma,r)=\frac{(-\Delta)^s\z(r)}{c_{N,s}} \qquad \text{for every $\sigma \in \Omega$, $r\ge 0$.}
\end{equation}
\end{lemma}

\begin{proof}
For $\sigma \in \partial \Omega$ and $0 < r < k \e'$, we write, with a change of variables,
\begin{align}
  &A_{k}(\sigma,r) \label{change-of-variables-A}\\
  &= \int_{ \Psi_\s((-\e,\e) \times B)} \frac{\z(r)-\z_k(y)}{|\Psi(\s,\frac{r}{k}) - y|^{N+2s}}\, dy= \int_{-\e}^{\e} \int_{B}{\rm Jac}_{\Psi_\s}(\tilde r,z)\: \frac{\z(r)-\z(k \tilde r)}{|\Psi_\s(\frac{r}{k},0) - \Psi_\s(\tilde r,z)|^{N+2s}}\,d z d \tilde r \nonumber\\
  &= \frac{1}{k}\int_{-k\e-r}^{k\e-r} \int_{B}{\rm Jac}_{\Psi_\s}(\frac{r+t}{k},z)\: \frac{\z(r)-\z(r+t)}{|\Psi_\s(\frac{r}{k},0) - \Psi_\s(\frac{r+t}{k},z)|^{N+2s}}\,d z d t \nonumber\\
  &= \int_{-k\e-r}^{k\e-r} \frac{|t|^{N-1}}{k^N} \int_{\frac{k}{|t|}B}{\rm Jac}_{\Psi_\s}(\frac{r+t}{k},\frac{|t|z}{k})\: \frac{\z(r)-\z(r+t)}{|\Psi_\s(\frac{r}{k},0) - \Psi_\s(\frac{r+t}{k},\frac{|t|z}{k})|^{N+2s}}\,d z d t\nonumber\\
  &= k^{2s}\int_{\R}\frac{\z(r)-\z(r+t)}{|t|^{1+2s}}\cK_k(r,t) dt \nonumber
\end{align}
with the kernels $\cK_k: (0,k\e') \times \R  \to \R$ defined by  
$$
\cK_k(r,t)= \left\{
  \begin{aligned}
   & \Bigl(\frac{|t|}{k}\Bigr)^{N+2s} \int_{\frac{k}{|t|}B} \frac{ {\rm Jac}_{{\Psi_\s}}(\frac{r+t}{k},\frac{|t|z}{k})}{\bigl|{\Psi_\s}(\frac{r}{k},0)-{\Psi_\s}(\frac{r+t}{k},\frac{|t|z}{k}) \bigr|^{N+2s}}dz,&&\quad t\in (-k\eps-r,k\eps-r), \\
   &0,&&\quad t \not \in (-k\eps-r,k\eps-r).
    \end{aligned}
\right.
$$
Consequently, 
\begin{equation}
  \label{eq:A-k-splitting}
A_{k}(\sigma,r)= k^{2s}\Bigl(
J^1_k(\s,r)+ J^2_k(\s,r)\Bigr)
\end{equation}
with 
$$
J^1_k(\s,r):=\frac{1}{4 } \int_{\R} \frac{2\z(r)-\z(r+t)-\z(r-t)}{|t|^{1+2s}} \bigl(\cK_k(r,t) +\cK_k(r,-t)\bigr) dt
$$
and 
$$
J^2_k(\s,r):=-\frac{1}{4} \int_{\R} \frac{\z(r+t)-\z(r-t)}{|t|^{2s}}\,\frac{\cK_k(r,t)- \cK_k(r,-t)}{|t|} dt.
$$
By Lemma~\ref{new-lemma-Psi-sigma}(i),(iv) and the definition of $\cK_k$, we have 
\be \label{eq:est-cK1}
|\cK_k(r,t)|   \le C_0^{N+2s+1} \int_{\frac{k}{|t|}B} \Bigl(1+ \Bigl|z|^2\Bigr)^{-\frac{N+2s}{2}}dz \le  C_0^{N+2s+1} a_{N,s}
\ee
for $r \in (-k\e',k\e')$ and $t \in \R \setminus \{0\}$ with
\be
\label{def-a-n-s}
a_{N,s}:=\int_{\R^{N-1}}(1+|z|^2)^{-\frac{N+2s}{2}}dz < \infty.
\ee
Moreover, by Lemma~\ref{new-lemma-Psi-sigma}(i)(ii),(iv),(v) and the dominated convergence theorem, we have 
\be \label{eq:est-cK2}
\lim_{k\to \infty}\cK_k(r,t)= \int_{\R^{N-1}}(1+|z|^2)^{-\frac{N+2s}{2}}\,dz = a_{N,s}
\qquad \text{for every $r \ge 0$, $t \in \R \setminus \{0\}$.}
\ee
Using \eqref{eq:est-cK1} and the fact that $\rho=1-\z \in C^\infty_c(\R)$, we obtain the estimate
\begin{align}
| J^1_k(\s,r)| &\leq C \int_{\R}   {\frac{|2\z( r)-\z(r+t)-\z(r-t)|}{|t|^{1+2s}}} \,dt  \nonumber\\
&= C \int_{\R}   {\frac{|2\rho(r)-\rho(r+t)-\rho(r-t)|}{|t|^{1+2s}}} \,dt \leq  \frac{C}{1+r^{1+2s}}  \label{eq:J1}
\end{align}
for $k \in \N$, $r\in (0,k\e')$ and $\sigma \in \partial \Omega$. Here and in the following, the letter $C>0$ stands for different positive constants.  Moreover, by \eqref{eq:est-cK1}, \eqref{eq:est-cK2} and the dominated convergence theorem, we find that 
\begin{equation}
\lim_{k \to \infty} J^1_k(\s,r) 
=  \frac{a_{N,s}}{2} \int_{\R}\frac{2\z(r)-\z(r+t)-\z(r-t)}{|t|^{1+2s}}dt = \frac{a_{N,s}}{c_{1,s}} (-\Delta)^s\z(r)= \frac{(-\Delta)^s\z(r)}{c_{N,s}}. \label{eq:J2}
\end{equation}
Here we have used the fact that 
\be\label{eq:bNs-aNs}
c_{N,s}a_{N,s}=c_{1,s},
\ee
see e.g. \cite{FW-half}.

Next we deal with $J^2_k(\s,r)$, and for this we have to estimate the kernel differences $|\cK_k(r,t)- \cK_k(r,-t)|$. 
By Lemma~\ref{new-lemma-Psi-sigma}(i),(iii),(iv) and (vi), we have  
$$
\Bigl| \frac{ {\rm Jac}_{{\Psi_\s}}(\frac{r+t}{k},\frac{|t|z}{k})}{\bigl|{\Psi_\s}(\frac{r}{k},0)-{\Psi_\s}(\frac{r+t}{k},\frac{|t|z}{k}) \bigr|^{N+2s}}-\frac{ {\rm Jac}_{{\Psi_\s}}(\frac{r-t}{k},\frac{|t|z}{k})}{\bigl|{\Psi_\s}(\frac{r}{k},0)-{\Psi_\s}(\frac{r-t}{k},\frac{|t|z}{k}) \bigr|^{N+2s}}\Bigr|\le C \Bigl(\frac{|t|}{k}\Bigr)^{1-N-2s}(1+|z|^2)^{-\frac{N+2s}{2}} 
$$
for $z\in \frac{k}{|t|}B$, $r\in (0,k\e')$ and $t \in (-k\e+r,k\e-r)$ and therefore
\begin{equation}
  \frac{|\cK_k(r,t)- \cK_k(r,-t)|}{|t|} \le \frac{C}{k} \int_{\frac{k}{|t|}B}(1+|z|^2)^{-\frac{N+2s}{2}} \,dz \le \frac{C}{k}
  \int_{\R^{N-1}}(1+|z|^2)^{-\frac{N+2s}{2}} \,dz \le \frac{C}{k} \label{eq:est-cK1-0-new}\
\end{equation}
for $r\in (0,k\e') $ and $t \in (-k\e+r,k\e-r)$. Moreover, by definition we have 
\begin{equation}
\label{eq:est-cK1-0-0-new}
  |\cK_k(r,t)- \cK_k(r,-t)|= 0 \qquad \text{for $t \in \R \setminus (-k\e-r,k\e+r)$,}
\end{equation}
while for $t \in (-k\e-r,-k\e+r) \cup 
(k\e-r,k\e+r)$ we have $|t|\ge k\e -\eps' \ge \frac{k\eps}{2}$ and therefore, similarly as in \eqref{eq:est-cK1}, 
\be \label{eq:est-cK1-new-1}
\frac{|\cK_k(r,t)|}{|t|}    \le \frac{C}{|t|} \int_{\frac{k}{|t|}B} \bigl(1+ |z|^2\bigr)^{-\frac{N+2s}{2}} dz \le
\frac{C}{k} \int_{\frac{2}{\eps}B} \bigl(1+ |z|^2\bigr)^{-\frac{N+2s}{2}} dz \le \frac{C}{k}.
\ee
Note here that the constant $C>0$ on the RHS depends on $\eps$, but this is not a problem. Combining (\ref{eq:est-cK1-0-new}), (\ref{eq:est-cK1-0-0-new}), \eqref{eq:est-cK1-new-1} and using that $\rho=1-\z\in C^\infty_c(\R)$, we get 
\begin{align*}
&|J^2_k(\s,r)| \le \frac{1}{4 } \int_{\R} \frac{|\z(r+t)-\z(r-t)|}{|t|^{2s}} \frac{|\cK_k(r,t)- \cK_k(r,-t)|}{|t|} dt\\ 
                      &\le \frac{C}{k}  \int_{\R} \frac{|\z(r+t)-\z(r-t)|}{t^{2s}}d t = \frac{C}{k}  \int_{\R} \frac{|\rho(r+t)-\rho(r-t)|}{t^{2s}} d t \le \frac{C(1+r)^{-2s}}{k}
\end{align*}
for $k \in \N$, $\sigma \in \partial \Omega$ and $0 \le r <k \e'$. Hence
\begin{equation}
  \label{eq:J-2-concl-bound}
|J^2_k(\s,r)|  \le \frac{C}{1+r^{1+2s}}\qquad \text{for $k \in \N$, $0 \le r < k \eps'$}
\end{equation}
and 
\begin{equation}
  \label{eq:J-2-concl-limit}
\lim_{k \to \infty}|J^2_k(\s,r)| = 0 \qquad \text{for all $r \ge 0$.}
\end{equation}
Now (\ref{eq:A-estimate-majorization}) follows by combining (\ref{eq:A-k-splitting}), (\ref{eq:J1}) and (\ref{eq:J-2-concl-bound}).
Moreover, (\ref{eq:A-estimate-limit}) follows by combining (\ref{eq:A-k-splitting}), (\ref{eq:J2}) and (\ref{eq:J-2-concl-limit}). \end{proof}

\begin{proof}[Proof of Proposition~\ref{delta-s-z-k-conv}]
The proof is completed by combining (\ref{eq:delta-s-splitting}) with Lemmas~\ref{B-k-new-lemma} and~\ref{A-k-new-lemma}. 
\end{proof}

It finally remains to estimate the function $G_k^2$ in (\ref{G-k-splitting}). 

 \begin{lemma}
\label{I-s-z-k-conv}
There exists $\eps'>0$ with the property that the function $G_k^2$ defined in (\ref{h-0-1-2}) satisfies  
\begin{equation}
  \label{eq:I-s-z-k-concl-bound}
|k^{-s} G_k^2(\sigma,r)| \le \frac{C}{1+r^{1+s}}\qquad \text{for $k \in \N$, $0 \le r < k \eps'$, $\sigma \in \partial \Omega$}
\end{equation}
with a constant $C>0$. Moreover, 
\begin{equation}
  \label{eq:I-s-z-k-concl-limit}
\lim_{k \to \infty} k^{-s} G_k^2(\sigma,r) = \psi(\s)\tilde I(r)
\end{equation}
with 
$$
\tilde I(r) = c_{1,s} \int_{\R}\frac{ \bigl(r^s_+ - (r+t)^s_+ \bigr)\bigl(\z(r)-\z(r+t)\bigr)}{|t|^{1+2s}}dt.
$$
\end{lemma}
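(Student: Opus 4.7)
The plan is to follow the strategy of Lemma~\ref{delta-s-z-k-conv} while carefully tracking the additional factor $u(x)-u(y)$ in $I(u,\z_k\circ\d)$. Fix $\s\in\de\O$ and $0\le r<k\e'$, and set $x_k:=\Psi(\s,r/k)$. I split
\begin{equation*}
g_k^2(x_k)=b_{N,s}\bigl(A'_k(\s,r)+B'_k(\s,r)\bigr),
\end{equation*}
where $A'_k$ collects the contribution of $y\in\Psi_\s((-\e,\e)\times B)$ and $B'_k$ its complement. For $B'_k$, the separation estimate $|x_k-y|\ge 2\e'$ (coming from \eqref{eq:distance-sigma-ineq} and $r/k<\e'$), the bound $\|u\|_{L^\infty(\R^N)}<\infty$, and the fact that $\z_k(r/k)-\z_k(\d(y))=\rho_k(\d(y))-\rho(r)$ is supported in $\{|\d(y)|\le 2/k\}\cup\{\rho(r)\ne 0\}$ give $|B'_k(\s,r)|\le C(|\rho(r)|+1/k)$. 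Multiplying by $k^{-s}$, using $\rho\in C^\infty_c(\R)$, and comparing against $r<k\e'$, this contribution is dominated by $C/(1+r^{1+s})$ and vanishes as $k\to\infty$.

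For the near-part, using $u=\psi\d^s$ gives $u(\Psi_\s(\tilde r,z))=\psi(\Psi_\s(\tilde r,z))\tilde r_+^s$. Substitute $y=\Psi_\s(\tilde r,z)$ (with Jacobian ${\rm Jac}_{\Psi_\s}$), then $\tilde r=(r+t)/k$, and finally $z=(|t|/k)\tilde z$, exactly mirroring the passage from $A_k$ to \eqref{eq:HeqJ1J2}. By \eqref{eq:Psixp-Psiyp} the denominator factorizes as $|t/k|^{N+2s}(1+|\tilde z|^2)^{(N+2s)/2}(1+o(1))$, the Jacobians contribute $(|t|/k)^{N-1}(dt/k)\,d\tilde z$, and the two $\d^s$-factors contribute an extra $k^{-s}$; all powers of $k$ combine to an overall factor $k^s$. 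Accordingly,
\begin{equation*}
k^{-s}A'_k(\s,r)=\int_{\R}\int_{\frac{k}{|t|}B}\bigl(\psi_{k,1}r_+^s-\psi_{k,2}(r+t)_+^s\bigr)\bigl(\z(r)-\z(r+t)\bigr)\widetilde{\cK}_k(r,t,\tilde z)\,d\tilde z\,dt,
\end{equation*}
where $\psi_{k,1}:=\psi(x_k)$, $\psi_{k,2}:=\psi(\Psi_\s((r+t)/k,(|t|/k)\tilde z))$, and $\widetilde{\cK}_k$ is the appropriately rescaled kernel, which by \eqref{eq:est-cK2} and \eqref{eq:est-cK1} is pointwise convergent to and dominated by a constant multiple of $|t|^{-1-2s}(1+|\tilde z|^2)^{-(N+2s)/2}$. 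The Hölder bound \eqref{eq:Bnd-reg-v-eps} gives $\psi_{k,j}\to\psi(\s)$ pointwise, so dominated convergence yields
\begin{equation*}
\lim_{k\to\infty}k^{-s}A'_k(\s,r)=\psi(\s)\,a_{N,s}\int_{\R}\frac{(r_+^s-(r+t)_+^s)(\z(r)-\z(r+t))}{|t|^{1+2s}}\,dt.
\end{equation*}
Multiplying by $b_{N,s}$, using $b_{N,s}a_{N,s}=b_{1,s}$ from \eqref{eq:bNs-aNs}, and substituting $\tilde r=r+t$ deliver $\psi(\s)\tilde I(r)$, establishing \eqref{eq:I-s-z-k-concl-limit}.

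The main obstacle is the uniform bound \eqref{eq:I-s-z-k-concl-bound}, whose decay $r^{-1-s}$ is slower than the $r^{-1-2s}$ in Lemma~\ref{delta-s-z-k-conv}. The weaker rate reflects the absence of second-order cancellation: the map $t\mapsto r_+^s-(r+t)_+^s$ is not even near the support of $\z(r)-\z(r+t)$, so the symmetric decomposition that produced $|2\z(r)-\z(r+t)-\z(r-t)|$ in the bound for $J^1_k$ is unavailable. For $r\ge 2$, however, $\z(r)-\z(r+t)=-\rho(r+t)$ is supported where $|t|\ge r-2$; combining $|r_+^s-(r+t)_+^s|\le C(r^s+|t|^s)$ with $|t|^{-1-2s}$ and the compact support of $\rho$ bounds the $t$-integral by $Cr^{-1-s}$. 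For $0\le r\le 2$, the Hölder-type estimate $|r_+^s-(r+t)_+^s|\le C|t|^s$ provides integrability near $t=0$, while the compact support of $\z(r)-\z(r+t)$ in $t$ yields boundedness at infinity. Multiplying by the finite $\tilde z$-integral of $(1+|\tilde z|^2)^{-(N+2s)/2}$ and using $\|\psi\|_{L^\infty}<\infty$ completes the estimate.
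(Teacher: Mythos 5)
Your decomposition into near and far parts $\tilde A_k$, $\tilde B_k$, the treatment of the far part, the change of variables for the near part, and the computation of the limit via pointwise convergence of the $\psi$-factors and the rescaled kernel all mirror the paper's argument and are sound. The gap is in the uniform bound needed for dominated convergence (and for \eqref{eq:I-s-z-k-concl-bound}).

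You write the near-part integrand in terms of $\psi_{k,1}\,r_+^s - \psi_{k,2}\,(r+t)_+^s$ and then invoke the one-dimensional estimate $|r_+^s-(r+t)_+^s|\le C|t|^s$ together with $\|\psi\|_{L^\infty}<\infty$ to control the singularity at $t=0$. But $\psi_{k,1}\ne\psi_{k,2}$, so one cannot factor out a common $\psi$; writing $\psi_{k,1}r_+^s-\psi_{k,2}(r+t)_+^s = \psi_{k,1}\bigl(r_+^s-(r+t)_+^s\bigr)+(\psi_{k,1}-\psi_{k,2})(r+t)_+^s$, the cross term satisfies only $|\psi_{k,1}-\psi_{k,2}|\le [\psi]_{C^\alpha}\bigl(\tfrac{|t|}{k}(1+|\tilde z|)\bigr)^\alpha$, so it vanishes like $|t|^\alpha$ rather than $|t|^s$. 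Combined with the Lipschitz bound $|\z(r)-\z(r+t)|\le C|t|$ and the kernel factor $|t|^{-1-2s}$, the cross term contributes $|t|^{\alpha-2s}$ near $t=0$, which is integrable only if $\alpha>2s-1$. Since $\alpha$ is just some H\"older exponent produced by the boundary regularity theory and is not guaranteed to satisfy $\alpha>2s-1$ (for $s$ close to $1$ it can be much smaller), the argument fails. The paper sidesteps this entirely: rather than splitting $u=\psi\delta^s$, it applies the global $C^s$-H\"older continuity $u\in C^s(\R^N)$ directly to $u(\Psi_\s(\tfrac{r}{k},0))-u(\Psi_\s(\tfrac{r+t}{k},\tfrac{|t|}{k}z))$, obtaining the clean majorant $Ck^{-s}|t|^s(1+|z|^s)$ that tames the singularity at $t=0$ for all $s\in(0,1)$. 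You should use $u\in C^s(\R^N)$ (which is part of the hypotheses) for the majorization, reserving the factorization $u=\psi\delta^s$ only for identifying the pointwise limit.

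A secondary slip: your statement that ``the compact support of $\z(r)-\z(r+t)$ in $t$ yields boundedness at infinity'' is false when $\rho(r)\ne0$ (e.g.\ $0\le r<2$), since $\z(r)-\z(r+t)=\rho(r+t)-\rho(r)\to-\rho(r)\ne 0$ as $|t|\to\infty$. The integrability at infinity instead comes from $|r_+^s-(r+t)_+^s|\lesssim (1+|t|)^s$ against $|t|^{-1-2s}$, giving an $|t|^{-1-s}$ tail.
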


\begin{proof}
  The proof is similar to the one of Proposition~\ref{delta-s-z-k-conv}, but there are some
  differences we need to deal with. First, as in the proof of Proposition~\ref{delta-s-z-k-conv}, we choose $\e' \in (0,\frac{\eps}{2})$ small enough, so that (\ref{eq:distance-sigma-ineq}) holds. Similarly as in (\ref{eq:delta-s-splitting}) we can then write
\begin{equation}
  \label{eq:I-s-splitting}
G_k^2(\sigma,r)= c_{N,s} \Bigl( \widetilde A_k(\sigma,r)+ \widetilde B_k(\sigma,r) \Bigr)
\end{equation}
with
$$
\widetilde A_{k}(\sigma,r) := \int_{ \Psi_\s((-\e,\e) \times B)} \frac{(u({\Psi}(\sigma,\frac{r}{k}))-u(y))(\z(r)-\z_k(y))}{|\Psi(\s,\frac{r}{k}) - y|^{N+2s}}\, dy
$$
and 
$$
\widetilde B_k(\sigma,r) = \int_{\R^N \setminus \Psi_\s((-\e,\e) \times B)} \frac{(u({\Psi}(\sigma,\frac{r}{k}))-u(y))(\z(r)-\z_k(y))}{|\Psi(\s,\frac{r}{k}) - y|^{N+2s}}\, dy.
$$
As noted in the proof of Lemma~\ref{B-k-new-lemma}, we have 
$$
|\Psi(\s,\frac{r}{k})- y| \ge \frac{|\sigma - y|}{3} +\eps' \quad \text{for $y \in \R^N \setminus \Psi_\s((-\e,\e) \times B)$, $0<r< k \e'$.}
$$
Therefore, since $u \in L^\infty(\R^N)$, we may estimate as in the proof of Lemma~\ref{B-k-new-lemma} to get  
$$
|\widetilde B_k(\sigma,r)| \le 2 \|u\|_{L^\infty} \int_{\R^N \setminus \Psi_\s((-\e,\e) \times B)}\frac{|\rho(r)-\rho_k(y)|}{|\Psi(\s,\frac{r}{k}) - y|^{N+2s}}dy
\le C \Bigl(|\rho(r)| + k^{-1}\Bigr).
$$
Here, as before, the letter $C$ stands for various positive constants. Consequently,
\begin{equation}
  \label{eq:tilde-B-estimate-limit}
\lim_{k \to \infty}k^{-s}|\widetilde B_k(\sigma,r)|=0 \qquad \text{for every $\sigma \in \Omega$, $r\ge 0$,}
\end{equation}
since $\rho$ has compact support in $\R$, and
\begin{equation}
  \label{eq:tilde-B-estimate-majorization}
k^{-s}|\widetilde B_k(\sigma,r)| \le C k^{-s}\Bigl(|\rho(r)| + k^{-1}\Bigr) \le \frac{C}{1+r^{1+s}}  
\qquad \text{for $k \in \N$, $0 \le r < k \eps'$, $\sigma \in \partial \Omega$.}
\end{equation}
Hence it remains to estimate $\widetilde A_{k}(\sigma,r)$. For this we note that, by the same change of variables as in (\ref{change-of-variables-A}), we have 
\begin{align}
&\widetilde A_{k}(\sigma,r)= \int_{-\e}^{\e} \int_{B}{\rm Jac}_{\Psi_\s}(z,\tilde r)\: \frac{(u(\Psi(\frac{r}{k},0))-u(\Psi_\s(\tilde r,z)))  
(\z(r)-\z(k \tilde r))}{|\Psi_\s(\frac{r}{k},0) - \Psi_\s(\tilde r,z)|^{N+2s}}\,d z d \tilde r \nonumber\\
&= k^s \int_{\R}\frac{\z(r)-\z(r+t)}{|t|^{1+s}} \widetilde \cK_k(r,t)dt  \label{tilde-A-change-of-variables}
\end{align}
with the kernel 
\begin{align*}
&\widetilde \cK_k(r,t)\\
&= \left\{
  \begin{aligned}
   & \Bigl(\frac{|t|}{k}\Bigr)^{N+s} \int_{\frac{k}{|t|}B} \frac{\bigl(u(\Psi_\s(\frac{r}{k},0))-u(\Psi_\s(\frac{r+t}{k},\frac{|t|}{k}z))\bigr) {\rm Jac}_{{\Psi_\s}}(\frac{r+t}{k},\frac{|t|z}{k})}{\bigl|{\Psi_\s}(\frac{r}{k},0)-{\Psi_\s}(\frac{r+t}{k},\frac{|t|z}{k}) \bigr|^{N+2s}}dz,&&\quad t\in (-k\eps-r,k\eps-r), \\
   &0,&&\quad t \not \in (-k\eps-r,k\eps-r).
    \end{aligned}
\right.
\end{align*}
Since $u\in   C^s(\R^N)$ and $\Psi_\s$ is Lipschitz, we have  
$$
\bigl|u(\Psi_\s(\frac{r}{k},0))-u(\Psi_\s(\frac{r+t}{k},\frac{|t|}{k}z)\bigr| \leq C  \Bigl(\bigl(\frac{|t|}{k}\bigr)^2 +\bigl(\frac{|t z|}{k}\bigr)^{2}\Bigr)^{\frac{s}{2}} \leq C\Bigl(\frac{|t|}{k}\Bigr)^s(1+|z|^s),
$$
for $\sigma \in \partial \Omega$, $r \in (-k\eps,k\eps)$, $t \in (-k\eps-r,k\eps-r) \setminus \{0\}$ and $z \in \frac{k}{|t|}B$. Therefore, by using Lemma~\ref{new-lemma-Psi-sigma}(i),(iv) as in \eqref{eq:est-cK1}, 
\begin{equation}
  \label{eq:tilde-K-bound}
|\widetilde \cK_k(r,t)|   \le  C \int_{\R^{N-1}}(1+|z|^s) (1+|z|^2)^{-\frac{N+2s}{2}}dz \le C \int_{\R^{N-1}}(1+|z|)^{-N-s}dz < \infty.
\end{equation}
Inserting this estimate in (\ref{tilde-A-change-of-variables}), we conclude that 
$$
k^{-s} |\widetilde A_{k}(\sigma,r)|\le C \int_{\R}\frac{|\z(r)-\z(r+t)|}{|t|^{1+s}}\,dt =  C \int_{\R}\frac{|\rho(r)-\rho(r+t)|}{|t|^{1+s}}\,dt \le  \frac{C}{1+r^{1+s}}.
$$
for $k \in \N$, $0 \le r < k \eps'$, $\sigma \in \partial \Omega$. Combining this inequality with (\ref{eq:tilde-B-estimate-majorization}), we obtain (\ref{eq:I-s-z-k-concl-bound}). 
Moreover, since $u \in C^s_0(\ov{\O})$ and $\psi= \frac{u}{\delta^s} \in C^0(\ov\O)$, we have 
\be\label{eq:equzLemf}
\lim_{k \to \infty} k^s \left[ u(\Psi_\s(\frac{r}{k},0))-u(\Psi_\s(\frac{r+t}{k},\frac{|t|}{k}z))\right]  =  \psi(\s)(r^s_+ - (r+t)^s_+ )
\ee
for $\sigma \in \partial \Omega$, $r> 0$ and $t \in \R$ and $z \in \R^{N-1}$. Consequently, arguing as for \eqref{eq:est-cK2} with Lemma~\ref{new-lemma-Psi-sigma}(i)(ii),(iv),(v) and the dominated convergence theorem, we find that  
\begin{equation}
  \label{eq:w-k(r-t-z)-limit}
\lim_{k \to \infty}  \widetilde \cK_k(r,t) = \psi(\s)\frac{(r^s_+ - (r+t)^s_+)}{|t|^s}\int_{\R^{N-1}} (1+|z|^2)^{-\frac{N+2s}{2}} dz = a_{N,s}\psi(\s)\frac{(r^s_+ - (r+t)^s_+)}{|t|^s}  
\end{equation}
for $\sigma \in \partial \Omega$, $r> 0$ and $t \in \R$ with $a_{N,s}$ given in \eqref{def-a-n-s}. Hence, by (\ref{tilde-A-change-of-variables}), (\ref{eq:tilde-K-bound}), (\ref{eq:w-k(r-t-z)-limit}) and the dominated convergence theorem,
$$
\lim_{k \to \infty} k^{-s} \widetilde A_{k}(\sigma,r)= a_{N,s} \psi(\s) \int_{\R}\frac{ (r^s_+ - (r+t)^s_+ )(\z(r)-\z(r+t))}{|t|^{1+2s}}dt = \frac{a_{N,s}}{c_{1,s}} \psi(\s) \tilde I(r) = \frac{\psi(\s) \tilde I(r)}{c_{N,s}},
$$
where we used again \eqref{eq:bNs-aNs} for the last equality. Combining this with (\ref{eq:I-s-splitting}) and (\ref{eq:tilde-B-estimate-limit}), we obtain~(\ref{eq:I-s-z-k-concl-limit}). 
\end{proof}
 
We are now ready to complete the 
 \begin{proof}[Proof of Proposition~\ref{lem:lim-ok-new-section}]
Combining (\ref{eq:h-k-0-bound}), (\ref{eq:h-2-bound}) and (\ref{eq:I-s-z-k-concl-bound}), we see that there exists $\eps'>0$ with the property that the functions $G_k$ defined in (\ref{eq:def-ge-k}) satisfy 
\begin{equation}
  \label{eq:complete-proof-bound}
\frac{G_k(\s,r)}{k} \le C \frac{r^{s-1} + r^{s-1+\alpha}}{1+r^{1+s}}\qquad \text{for $k \in \N$, $0 \le r < k \eps'$}
\end{equation}
with a constant $C>0$ independent of $k$ and $r$. Since $s, \alpha \in (0,1)$, the RHS of this inequality is integrable over $[0,\infty)$. Moreover, by (\ref{eq:h-k-0-limit}), (\ref{eq:h-2-limit}) and (\ref{eq:I-s-z-k-concl-limit}), 
\begin{equation}
  \label{eq:complete-proof-limit}
\frac{1}{k}G_k(\s,r) \to  [X(\s) \cdot \nu(\s)] \psi^2(\s)h'(r)\bigl(r^{s}(-\Delta)^s \z(r)-\tilde I(r)\bigr)
\end{equation}
for every $r>0$, $\s \in\de \Omega$ as $k \to \infty$. Next we note that, by a standard computation, 
\begin{equation}
  \label{eq:complete-proof-limit-1}
(-\Delta)^s  h(r)= (-\Delta)^s [r^{s}_+ \z(r)] = \z(r)(-\Delta)^s r^s_+ + r^s_+  (-\Delta)^s \z(r)-\tilde I(r) = r^s_+  (-\Delta)^s \z(r)-\tilde I(r)
\end{equation}
for $r>0$ since $r^s_+$ is an $s$-harmonic function on $(0,\infty)$ see e.g \cite{BC}.  Hence, by (\ref{claim-first-reduction}),~(\ref{claim-first-reduction}), (\ref{eq:complete-proof-bound}), (\ref{eq:complete-proof-limit}), (\ref{eq:complete-proof-limit-1}) and the dominated convergence theorem, we conclude that 
 \begin{align*}
\lim_{k \to \infty}\int_{\Omega}g_k dx &= \int_0^\infty h'(r)(-\Delta)^s  h(r)dr  \int_{\partial \Omega}[X(\s) \cdot \nu(\s)] \psi^2(\s)d\s\\
 &= \int_{\R} h'(r)(-\Delta)^s  h(r)dr  \int_{\partial \Omega}[X(\s) \cdot \nu(\s)] \psi^2(\s)d\s,
 \end{align*}
as claimed in (\ref{eq:lem:new-section-claim}). 
 \end{proof}
 
\appendix
\section{}

Here we give a short proof of the uniqueness of positive minimizers of the problem \eqref{eq:def-lambda-sp} for $1 \le p \le 2$.

\begin{lemma}
\label{uniqueness-extended}  
Let $\Omega \subset \R^N$ be a bounded open set of class $C^{1,1}$, let $p\in[1,2]$, and let $u_1$ and $u_2$ be two positive minimizers of \eqref{eq:def-lambda-sp}. Then $u_1=u_2$.
\end{lemma}

\begin{proof}
 Suppose by contradiction that there are two different positive minimizers $u_1,u_2$ for 
the minimization problem. Then, since $\|u_1\|_{L^p(\O)} = \|u_2\|_{L^p(\O)} = 1$, the difference $u_1-u_2$ changes sign. Since moreover $\frac{u_1}{\delta^s}$ and $\frac{u_2}{\delta^s}$ are continuous positive functions on $\overline \Omega$ by Lemma~\ref{reg-prop-minimizers}, there exists a maximal $\tau \in (0,1)$ with
$$
\tau u_1 \le u_2 \qquad \text{on $\overline \Omega$.}
$$
Moreover, $\tau u_1 \not \equiv u_2$ since $u_1-u_2$ changes sign. Consequently, $v:= u_2 -\tau u_1$ satisfies $v \ge 0$ on $\overline \Omega$ and $v \not \equiv 0$. Moreover, using that $p-1 \in [0,1]$ and $\tau \in (0,1)$, we find that 
$$
(-\Delta)^s v = \lambda \bigl(u_2^{p-1}-\tau u_1^{p-1}\bigr) \ge \lambda \bigl(u_2^{p-1}-(\tau u_1)^{p-1}\bigr) \ge 0 \quad \text{in $\Omega$,}\qquad v= 0 \qquad \text{in $\R^N \setminus \Omega$}
$$
with $\lambda := \lambda_{s,p}(\Omega)>0$. Now the strong maximum principle for the fractional Laplacian and the fractional Hopf lemma implies that $v= u_2- \tau u_1$ is strictly positive in $\Omega$ and $\frac{v}{\delta^s}>0$ on $\partial \Omega$. This contradicts the maximality of $\tau$. Hence uniqueness holds.
\end{proof}

\end{document}